\def\hh{\mathbb{V}}
\def\spa{{\cal U}(\theta_H)}
\def\spb{{\cal U}(\theta_D)}
\def\uspa{{\cal U}_U(\theta_H)}
\def\uspb{{\cal U}_U(\theta_D)}
\def\d{ {\cal D} }
\def\a{ {\cal A} }
\def\h{ {\cal H} }
\def\ele{ {\cal L} }
\def\b{ {\cal B} }
\def\u{ {\cal U} }
\def\ii{ {\cal I} }
\def\t{ {\cal T} }
\def\s{ {\cal S} }
\def\e{ {\cal E} }
\def\k{ {\cal K} }
\def\x{ {\cal X} }
\def\v{ {\cal V} }
\def\w{ {\cal W} }
\def\oo{ {\cal O} }
\def\q{ {\cal Q} }
\def\g1{ \mathfrak{g}_1  }
\def\vp{  \varphi }
\newtheorem{teo}{Theorem}[section]
\newtheorem{prop}[teo]{Proposition}
\newtheorem{lem}[teo]{Lemma}
\newtheorem{coro}[teo]{Corollary}
\newtheorem{defi}[teo]{Definition}
\theoremstyle{definition}
\newtheorem{rem}[teo]{Remark}
\newtheorem{ejem}[teo]{Example}
\title{Poincar\'e half-space of a $C^*$-algebra}
\author{E. Andruchow, G. Corach, L. Recht}
\begin{document}

\maketitle 

\begin{abstract}
Let  $\a$ be a C$^*$-algebra. Given a representation $\a\subset\b(\ele)$ in a Hilbert space $\ele$, the set $G^+\subset\a$ of positive invertible elements can be thought as the set of inner products in $\ele$, related to $\a$, which are equivalent to the original inner product. The set $G^+$ has a rich  geometry, it is a homogeneous space of the invertible group $G$ of $\a$, with an invariant Finsler metric. In the present paper we study the tangent bundle $TG^+$ of $G^+$, as a homogenous Finsler space of a natural  group of invertible matrices in $M_2(\a)$, identifying $TG^+$ with the {\it Poincar\'e halfspace} $\h$ of $\a$,
$$
\h=\{h\in\a: Im(h)\ge 0, Im(h) \hbox{ invertible}\}.
$$
We show that  $\h\simeq TG^+$ has properties similar to  those of a space of non-positive constant curvature.

\end{abstract}
\bigskip

{\bf 2010 MSC:} 46L05, 58B20, 22E65, 46L08 

{\bf Keywords:}  Positive invertible operator, inner product. 

\section{Introduction}
Let $\a$ be a unital C$^*$-algebra, $G$ the group of invertible elements in $\a$, $G^+$ the subset of $G$ of positive elements.

Observe that $G^+$, as an open subset of $\a_s:=\{x \in \a: x^*=x\}$, is an open submanifold of $\a_s$ and its tangent space at any point is identified with $\a_s$. If $G^s=G\cap \a_s$, then it can be proven that $G^+$ is the component of the identity in $G^s$. Also, there is a left action of $G$ on $G^s$ given by $g\cdot a=(g^{-1})^*ag^{-1}$, and $G^+$ is the orbit of $1$ under this action. With this dual nature, $G^+$ carries a natural structure of homogeneous space of $G$, with a linear connection, and   a Finsler metric. These facts, and many others concerning the differential geometry of $G^+$, have been studied in  \cite{cprIEOT}, \cite{cprIJM}, \cite{cprILLINOIS}. The goal of the present paper is the study of the tangent bundle $TG^+$, in particular its presentation as a homogeneous space of the unitary group of a natural quadratic form in $\a^2=\a \times \a$.

There are several motivations for this study.

We consider $G^+$ as the configuration space of a quantum mechanical system whose elements represent a family of equivalent metrics over a Hilbert space. The tangent bundle $TG^+$ is the phase space of the configuration space $G^+$ of the quantum system.  Note also that $TG^+$ identifies in a natural way with the bundle of observables associated with the different metrics. We shall explain this with more detail below. 

Note the correspondence 
$$
(a,X) \longleftrightarrow X+ i a,
$$
where  $a\in G^+$  and  $X\in (TG^+)_a$;  here $X$ is a selfadjoint element of $\a$, because $G^+$ is open in the space of selfadjoint elements of $\a$. This correspondence establishes a clear identification  between  $TG^+$ and $\h$, the {\it Poincar\'e half-space} of $\a$,
$$
\h=\{h\in\a: Im(h)\in G^+\}.
$$
Following ideas from C.L. Siegel \cite{siegel1}, \cite{siegel2} and \cite{siegel3}, we claim that there is a natural form $\theta_H$ in $\a\times\a=\a^2$, determined by $\h$, whose unitary group $\u(\theta_H)$  acts transitively in $\h$. Namely, if we put projective coordinates $\left(\begin{array}{l} x_1 \\ x_2 \end{array}\right)\in\a^2$, elements $h=x_2x_1^{-1}\in\h$ are characterized by the condition
$$
Im(h)=\frac{1}{2i}\{x_2x_1^{-1}-(x_1^*)^{-1}x_2^*\}=\frac{1}{2i}(x_1^*)^{-1}\{x_1^*x_2-x_2^*x_1\}x_1^{-1}\in G^+,
$$
or equivalently $\frac{1}{2i}\{x_1^*x_2-x_2^*x_1\}\in G^+$.
Thus, if we put
$$
\theta_H(\left(\begin{array}{c} x_1\\ x_2 \end{array} \right),\left(\begin{array}{c} y_1\\ y_2 \end{array} \right))=\frac{1}{i}\{x_1^*y_2-x_2^*y_1\},
$$
the condition $Im(h)\in G^+$ is $\theta_H(\left(\begin{array}{c} x_1\\ x_2 \end{array} \right),\left(\begin{array}{c} x_1\\ x_2 \end{array} \right))\in G^+$.

The unitary  group $\spa$ of $\theta_H$, i.e., the group of invertible matrices in $M_2(\a)$ which preserve $\theta_H$,  acts transitively on $\h$, and  makes $\h$  a homogeneous space. Therefore, $TG^+$ can be considered as the phase space of the mentioned quantum system, with the group $\u(\theta_H)$ of symmetries.

The homogenous space $\h$ identifies in turn, also in a natural fashion, with the space $\q_{\rho_H}$ of selfadjoint projections in $\a^2$ which decompose the quadratic form $\theta_H$ (where $\rho_H$ is the reflection  in $\a^2$ induced  by $\theta_H$). This identification is equivariant with the respective actions of the symmetry group $\u(\theta_H)$.

The space $\q_\rho$, for arbitrary symmetries $\rho$,  was studied thoroughly in \cite{cpr}; we can deduce the geometric and metric properties of $TG^+$ from the properties established for $\q_{\rho_H}$. 

In a forthcoming second part of this paper, we shall study  additional geometric structures related to a pre-quantization of $TG^+$, in the framework of a Hilbert-C$^*$-module structure.  

We  also establish a natural bijection between  $TG^+$ and the space $\d$ of strict contractions of $\a$,
$$
\d=\{a\in\a: \|a\| < 1\}.
$$

Besides the form $\theta_H$ and  the group $\uspa$ of invertible elements in $M_2(\a)$ which leave $\theta_H$ invariant, an important role will be played by the unit sphere $\k_H$,
$$
\k_H=\{\left(\begin{array}{c} x_1\\ x_2 \end{array} \right)\in\a^2: \theta_H(\left(\begin{array}{c} x_1\\ x_2 \end{array} \right),\left(\begin{array}{c} x_1\\ x_2 \end{array} \right))=1\}.
$$
The sphere $\k_H$ will serve the role of a coordinate space for $\h$, and these data will be related by the commutative diagram of $\u(\theta_H)$-homogeneous spaces
$$
\xymatrix{
& \k_H \ar[ld]_{\varphi}\ar[rd]^{\bar{\varphi}} \\
\h \ar[rr]^{\Phi_H}
&& \q_{\rho_H}
}$$
where $\varphi$ and $\tilde{\varphi}$ are submersions and $\Phi_H$ is a diffeomorphism.

There is an analogous commutative triangle for the open unit disk  model, by means of the form $\theta_D$,
$$
\theta_D(\left(\begin{array}{c} x_1\\ x_2 \end{array} \right),\left(\begin{array}{c} y_1\\ y_2 \end{array} \right))=x_1^*y_1-x_2^*y_2.
$$
Both forms (and therefore both sets of data) are covariantly related by the unitary matrix in $M_2(\a)$
$$
U=\frac{1}{\sqrt{2}}\left( \begin{array}{cc} 1 & 1 \\ i & -i \end{array} \right).
$$
For instance, the bijection between $\h$ and $\d$ is a Moebius transformation. 

Let us summarize the contents of the paper. In Section 2 we define the space $\h$, the form $\theta_H$, the unitary group $\spa$  and the sphere $\k_H$ of this form. In Section 3 we introduce an alternative model for $\h$, namely the open unit disk $\d$, along with the form $\theta_D$ and the unitary group $\spb$ and sphere $\k_D$ of this form. The spaces, forms, groups and unit spheres of both models are intertwined by the unitary matrix $U\in M_2(\a)$. The reason to have two models for the same space, is that some computations are easier or more natural with one or the other. For instance, in Section 4 we study the local (Banach-Lie group) structure of $\spa$. An important role is played by the subgroup $\b\subset\spa$, which we call the {\it Borel} subgroup of $\spa$, and which has a natural meaning in this setting.  In  Section 5, we introduce the action of $\spb$ on the unit sphere $\k_D$, which implies that also $\spa$ acts on $\k_H$ (this is another example, where a fact is easier to establish in model than the other). These actions enable us to introduce the actions of $\spa$ and $\spb$ on $\h$ and $\d$, respectively. It is shown that these actions are transitive, and the isotropy subgroups are computed. In Section 7, we recall from \cite{cpr}  the space  $\q_\rho$ of signed decompositions of a form induced by a symmetry $\rho$, and prove that there are natural diffeomorphisms between $\q_\rho$, $\h$ and $\d$, which are equivariant with respect to the corresponding group actions. This is a key fact, which allows us to import from $\q_\rho$ to $\h$ and $\d$ the main geometric features of that space: a linear connection, a Finsler metric and its properties. Among these, that $\h$ behaves as a non-positively curved metric length space \cite{gromov}. In Section 9 we compute the specific form of the linear connection induced in $\h$. In Section 10 we compute special cases of geodesics in $\h$. Finally, in Section 11, as an Appendix, we outline an intrinsic, coordinate free, version for the Poncar\'e half-space, in terms of a Hilbertizable space endowed with a coherent family of inner products.

\section{Poincar\'e halfspace}

We define the following forms in $\a^2$:
\begin{defi}
The $\a$-valued {\it inner product}:
$$
<\left(\begin{array}{c} a_1\\ a_2 \end{array} \right),\left(\begin{array}{c} b_1\\ b_2 \end{array} \right)>=a_1^*b_1+a_2^*b_2
$$
and the $\a$-valued {\it simplectic form}
$$
\omega(\left(\begin{array}{c} a_1\\ a_2 \end{array} \right), \left(\begin{array}{c} b_1\\ b_2 \end{array} \right))=a_2^*b_1-a_1^*b_2.
$$
\end{defi}
Let us denote
$$
J=\left( \begin{array}{cc} 0 & 1 \\ -1 & 0 \end{array}\right) \in M_2(\a).
$$
Then, it is apparent that
$$
\omega(\left(\begin{array}{c} a_1\\ a_2 \end{array} \right),\left(\begin{array}{c} b_1\\ b_2 \end{array} \right))=<J\left(\begin{array}{c} a_1\\ a_2 \end{array} \right),\left(\begin{array}{c} b_1\\ b_2 \end{array} \right)>.
$$

We shall denote by $\tilde{a}, \tilde{b}, \tilde{c}$, etc. the elements of $M_2(\a)$. Let us denote by $Gl_2(\a)$ the group of invertible elements and by $\u_2(\a)$ the  group of unitary elements in $M_2(\a)$.

We shall use the selfadjoint reflection $\rho_H$ (i.e., $\rho_H^2=1$, $\rho_H^*=\rho_H$, ),  
$$
\rho_H=-iJ=\left( \begin{array}{cc} 0 & -i  \\ i & 0 \end{array}\right).
$$
and the  form $\theta_H$
$$
 \theta_H(\left(\begin{array}{c} x_1\\ x_2 \end{array} \right),\left(\begin{array}{c} y_1\\ y_2 \end{array} \right))=<\rho_H\left(\begin{array}{c} x_1\\ x_2 \end{array} \right),\left(\begin{array}{c} y_1\\ y_2 \end{array} \right)>=\frac{1}{i} (x_1^*y_2-x_2^*y_1),
$$
i.e., $\theta_H=i \omega$.

 The following group, which is the group of invertible matrices in $M_2(\a)$ which preserve the form $\theta_H$  (equivalently, the form $\omega$), will play an important role in this study:
\begin{equation}\label{simplectico}
\spa=\{\tilde{a}\in Gl_2(\a): \theta_H(\tilde{a}\left(\begin{array}{c} b_1\\ b_2 \end{array} \right),\tilde{a}\left(\begin{array}{c} c_1\\ c_2 \end{array} \right))=\theta_H(\left(\begin{array}{c} b_1\\ b_2 \end{array} \right),\left(\begin{array}{c} c_1\\ c_2 \end{array} \right)), 
$$
$$
 \hbox{ for all } \left(\begin{array}{c} b_1\\ b_2 \end{array} \right),\left(\begin{array}{c} c_1\\ c_2 \end{array} \right)\in \a^2\}.
\end{equation}
Clearly,
$$
\tilde{a}\in\spa \ \hbox{  if and only if } \ \rho_H\tilde{a}^*\rho_H=\tilde{a}^{-1}.
$$

\begin{defi}
Let $\k_H\subset\a^2$ be the set 
$$
\k_H=\{\left(\begin{array}{c} a_1\\ a_2 \end{array} \right)\in\a^2: \theta_H(\left(\begin{array}{c} a_1\\ a_2 \end{array} \right),\left(\begin{array}{c} a_1\\ a_2 \end{array} \right))=1, \hbox{ with } a_1\in G\},
$$
i.e., $\left(\begin{array}{c} a_1\\ a_2 \end{array} \right)\in\k_H$ if 
$\frac{1}{i}(a_1^*a_2-a_2^*a_1)= 2 Im(a_1^*a_2)=1$.
Notice that $a_2\in G$, automatically.
\end{defi}

We shall use  this hyperboloid $\k_H$ to understand the geometry of $\h$; it shall be a coordinate space for $\h$.

There is a natural fibration of $\k_H$ over $\h$:
$$
\varphi_H:\k_H\to\h, \varphi_H(\left(\begin{array}{c} x_1\\ x_2 \end{array} \right))=x_2x_1^{-1}.
$$

\section{The open unit disk model for $TG^+$}
We shall study an alternative model for $\h$ (i.e., for $TG^+$). The main reason to give this alternative version is that many computations  will be  clearer (and easier) in this model.

Let $\d$ be the open unit ball of $\a$:
$$
\d=\{z\in\a: z^*z<1\}=\{z\in\a: \|z\|<1\}.
$$

 Consider the selfadjoint reflection $\rho_D$:
$$
\rho_D=\left( \begin{array}{cc} 1 & 0 \\ 0 & -1 \end{array} \right) .
$$
The induced $\a$-valued indefinite inner product on $\a^2$
$$
\theta_D(\left(\begin{array}{c} x_1\\ x_2 \end{array} \right),\left(\begin{array}{c} y_1\\ y_2 \end{array} \right))=<\rho_D \left(\begin{array}{c} x_1\\ x_2 \end{array} \right),\left(\begin{array}{c} y_1\\ y_2 \end{array} \right)>=  x_1^*y_1-x_2^*y_2.
$$
The group $\u(\theta_D)$ of elements in $Gl_2(\a)$ which preserve the form $\theta_D$:
$$
\u(\theta_D)=\{\tilde{a}\in Gl_2(\a): \theta_D(\tilde{a }\left(\begin{array}{c} x_1\\ x_2 \end{array} \right), \tilde{a}\left(\begin{array}{c} y_1\\ y_2 \end{array} \right))=\theta_D(\left(\begin{array}{c} x_1\\ x_2 \end{array} \right),\left(\begin{array}{c} y_1\\ y_2 \end{array} \right))\}.
$$
Equivalently, $\tilde{a}\in \u(\theta_D)$  if $\rho_D\tilde{a}^*\rho_D=\tilde{a}^{-1}$.

Before proceeding any further, note that $\rho_D$ and $\rho_H$ are conjugate via the unitary element 
\begin{equation}\label{U}
U=\frac{1}{\sqrt{2}}\left( \begin{array}{cc} 1 & 1 \\ i & -i \end{array} \right) ,
\end{equation}
i.e., 
\begin{equation}\label{rhotheta}
U\rho_DU^*=\rho_H.
\end{equation}
  
Therefore, the groups $\spa$ and $\spb$ are conjugate, and the  properties and features of $\u(\theta_H)$ are translated to $\u(\theta_D)$. 

The sphere $\k_D$
$$
\k_D:=\{\left(\begin{array}{c} x_1\\ x_2 \end{array} \right)\in\a^2: \theta_D(\left(\begin{array}{c} x_1\\ x_2 \end{array} \right),\left(\begin{array}{c} x_1\\ x_2 \end{array} \right))=1, x_1\in G\}
$$
$$
=\{\left(\begin{array}{c} x_1\\ x_2 \end{array} \right)\in\a^2: x_1^*x_1-x_2^*x_2=1 , x_1\in G\}.
$$
There is an analogous fibration of $\k_D$ over $\d$:
$$
\varphi_D:\k_D\to\d,\ \  \varphi_D(\left(\begin{array}{c} x_1\\ x_2 \end{array} \right))=x_2x_1^{-1}.
$$
In fact,  if $\left(\begin{array}{l} x_1 \\ x_2 \end{array}\right)\in\k_D$, then
$$
x_1^*x_1=1+x_2^*x_2,
$$
and it follows that
$$
1=(x_1^{-1})^*(1+x_2^*x_2)x_1^{-1}=(x_1x_1^*)^{-1}+(x_2x_1^{-1})^*x_2x_1^{-1} ;
$$
thus
$$
(x_2x_1^{-1})^*x_2x_1^{-1}<1,
$$
i.e., $x_2x_1^{-1}\in\d$
\begin{lem}\label{Uactua}
$\left(\begin{array}{c} x_1\\ x_2 \end{array} \right)\in\k_D$ if and only if $U\left(\begin{array}{c} x_1\\ x_2 \end{array} \right)\in\k_H$. 
\end{lem}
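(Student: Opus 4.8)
The strategy is to push everything through the conjugation $U\rho_DU^*=\rho_H$ of (\ref{rhotheta}), and then to reconcile the two invertibility side-conditions by hand.

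The first step is the observation that, for every $v\in\a^2$,
$$
\theta_H(Uv,Uv)=\PI{\rho_H Uv}{Uv}=\PI{U\rho_DU^*Uv}{Uv}=\PI{U\rho_Dv}{Uv}=\PI{\rho_Dv}{v}=\theta_D(v,v),
$$
where we used (\ref{rhotheta}) and $U^*U=1$. Hence the equations $\theta_D(v,v)=1$ and $\theta_H(Uv,Uv)=1$ are equivalent for an arbitrary $v=\left(\begin{array}{c}x_1\\x_2\end{array}\right)\in\a^2$. Since the first coordinate of $Uv$ is $\frac{1}{\sqrt2}(x_1+x_2)$, the lemma reduces to the following claim: if $x_1^*x_1-x_2^*x_2=1$, then $x_1\in G$ if and only if $x_1+x_2\in G$.

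For the implication ($\Rightarrow$) I would simply invoke the computation carried out immediately before the statement: when $x_1\in G$ and $x_1^*x_1-x_2^*x_2=1$ one has $(x_2x_1^{-1})^*x_2x_1^{-1}<1$, hence $\|x_2x_1^{-1}\|<1$ and $1+x_2x_1^{-1}\in G$, so $x_1+x_2=(1+x_2x_1^{-1})x_1\in G$. For ($\Leftarrow$), set $p=x_1+x_2\in G$, substitute $x_2=p-x_1$ into $x_1^*x_1-x_2^*x_2=1$, and cancel the $x_1^*x_1$ terms to get $p^*x_1+x_1^*p=1+p^*p$; thus $Re(p^*x_1)=\tfrac12(1+p^*p)\ge\tfrac12$ is a positive invertible element of $\a$. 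Any $a\in\a$ with $Re(a)$ positive and invertible is itself invertible: writing $m=Re(a)^{-1/2}\,Im(a)\,Re(a)^{-1/2}$, which is selfadjoint, one has $a=Re(a)^{1/2}(1+im)Re(a)^{1/2}$, a product of invertibles because the spectrum of $1+im$ lies in $1+i\RR$ and so misses $0$. Therefore $p^*x_1\in G$, and consequently $x_1=(p^*)^{-1}(p^*x_1)\in G$.

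I do not expect any serious obstacle here. The heart of the lemma is the one-line identity $\theta_H(U\,\cdot\,,U\,\cdot\,)=\theta_D(\,\cdot\,,\,\cdot\,)$, i.e. the global content of $U\rho_DU^*=\rho_H$; the only mildly delicate point is matching the two ``first coordinate invertible'' conditions, and this rests entirely on the elementary remark that an element with positive invertible real part is invertible.
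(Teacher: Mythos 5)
Your proof is correct. The reduction via $U\rho_DU^*=\rho_H$ and the forward invertibility implication ($x_1\in G$ and $\|x_2x_1^{-1}\|<1$ imply $x_1+x_2=(1+x_2x_1^{-1})x_1\in G$) coincide with the paper's argument. You diverge only in the converse: the paper works in the half-space coordinates $\left(\begin{array}{c} z_1\\ z_2 \end{array} \right)=U\left(\begin{array}{c} x_1\\ x_2 \end{array} \right)$ and argues that $Im(z_2z_1^{-1})>0$ forces $-i\notin\sigma(z_2z_1^{-1})$, whence $z_1-iz_2\in G$; you instead stay in the disk coordinates, extract the identity $p^*x_1+x_1^*p=1+p^*p$ for $p=x_1+x_2$, and invoke the fact that an element of a $C^*$-algebra with positive invertible real part is invertible. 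Both converses rest on the same underlying principle (accretive elements are invertible), but yours is somewhat more self-contained: the paper's ``straightforward computation'' showing $-i\notin\sigma(z_2z_1^{-1})$ is precisely the kind of spectral-avoidance argument that you make explicit through the factorization $a=Re(a)^{1/2}(1+im)Re(a)^{1/2}$ with $m$ selfadjoint. Your route also never needs the observation that $z_2$ is automatically invertible, and the identity $Re(p^*x_1)=\frac12(1+p^*p)$ is a clean quantitative lower bound ($\ge\frac12$) rather than a mere nonvanishing statement. All steps check out; there is no gap.
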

\begin{proof}
The fact that $U\rho_DU^*=\rho_H$ implies that the equation $\theta_D(\left(\begin{array}{c} x_1\\ x_2 \end{array} \right),\left(\begin{array}{c} x_1\\ x_2 \end{array} \right))=1$ is equivalent to $\theta_H(U\left(\begin{array}{c} x_1\\ x_2 \end{array} \right),U\left(\begin{array}{c} x_1\\ x_2 \end{array} \right))=1$. 

Let us check the invertibility conditions. If $\left(\begin{array}{c} x_1\\ x_2 \end{array} \right)\in\k_D$, then $x_1\in G$. We must verify that the first coordinate of $U\left(\begin{array}{c} x_1\\ x_2 \end{array} \right)$ is invertible. Equivalently, that $1+x_2x_1^{-1}=(x_1+x_2)x_1^{-1}\in G$. Since $\left(\begin{array}{c} x_1\\ x_2 \end{array} \right)\in\k_D$, it follows that  $x_2x_1^{-1}\in\d$, and thus $\|x_2x_1^{-1}\|<1$. Then  $1+x_2x_1^{-1}$ is invertible.

Conversely, suppose that $\left(\begin{array}{c} z_1\\ z_2 \end{array} \right)=U\left(\begin{array}{c} x_1\\ x_2 \end{array} \right)\in\k_H$. Then $z_1, z_2$ are invertible, and we must check that $z_1-iz_2$ is also invertible. The fact that $\left(\begin{array}{c} z_1\\ z_2 \end{array} \right)\in\k_H$ implies that $Im(z_2z_1^{-1})>0$. Therefore a straightforward computation shows that  $-i\notin \sigma(z_2z_1^{-1})$. Then $z_2z_1^{-1}+i=i(z_1-iz_2)z_1^{-1}\in G$, i.e.,  $z_1-iz_2\in G$
\end{proof}

\section{The groups $\spa$ and $\spb$}
We shall describe in the next subsections the basic properties shared by $\spa$ and $\spb$. Some computations are easier or more natural in one of the two presentations of these isomorphic groups.

\subsection{The unitary  group $\spa$ of the form $\theta_H$}

In order to study $\spa$, we consider the following subgroups
\begin{defi}
Let $\b$ be the group of elements $\tilde{b}\in\spa$ which are of the form
$$
\tilde{b}=\left( \begin{array}{cc} b_{11} & 0 \\  b_{21} & b_{22} \end{array}\right)
$$ 
and $\t$ consisting of $\tilde{c}$,
$$
 \tilde{c}=\left( \begin{array}{cc} 1 & \tau \\  0 & 1 \end{array}\right),
$$ 
with $\tau^*=\tau$.
\end{defi}
It is apparent that $\b$, which we will call the {\it Borel subgroup} of $\spa$, is indeed a group. Also it is clear that $\t$ is a group. Note that
$$
J\tilde{c}^*J^{-1}=\left( \begin{array}{cc} 0 & 1 \\ -1 & 0 \end{array}\right)\left( \begin{array}{cc} 1 &  0 \\ \tau & 1 \end{array}\right)\left( \begin{array}{cc} 0 & -1 \\ 1 & 0 \end{array}\right)=\left( \begin{array}{cc} 1 & -\tau \\ 0 & 1 \end{array}\right)=\tilde{c}^{-1},
$$
i.e., $\tilde{c}\in\spa$.

We shall see that these groups are complemented Banach-Lie subgroups of $Gl_2(\a)$. 

An elementary computation shows that $\spa$ is closed under the involution of $M_2(\a)$: if $\tilde{a}\in\spa$ then $\tilde{a}^*\in\spa$. It follows that the factors of the polar decomposition of $\tilde{a}=\tilde{u}|\tilde{a}|$ remains inside $\spa$: $\tilde{u},|\tilde{a}|\in\spa$. It suffices to show that $|\tilde{a}|\in\spa$.
Clearly $|\tilde{a}|^2=\tilde{a}^*\tilde{a}\in\spa$. Note that both $J|\tilde{a}|J^{-1}$ and $|\tilde{a}|^{-1}$ are positive elements with the same square:
$$
(J|\tilde{a}|J^{-1})^2=J|\tilde{a}|^2J^{-1}=J\tilde{a}^*\tilde{a}J^{-1}=(\tilde{a}^*\tilde{a})^{-1}=|\tilde{a}|^{-2}.
$$
Then $J|\tilde{a}|J^{-1}=|\tilde{a}|^{-1}$.

The same is true for the other polar decomposition $\tilde{a}=|\tilde{a}^*|\tilde{w}$.

Note also the elementary fact that a unitary element $\tilde{u}$ belongs to $\spa$ if and only if it commutes with $J$. Similarly, a positive element $\tilde{b}\in\spa$ if and only if $J\tilde{b}J^{-1}=\tilde{b}^{-1}$.

We shall denote by $\uspa$ the (subgroup of) unitary elements of $\spa$, and by $\spa^+$  the set of positive elements in $\spa$.

\begin{prop}
The group $\spa$ is a $C^\infty$  Banach-Lie group, and a complemented submanifold of $M_2(\a)$. 
\end{prop}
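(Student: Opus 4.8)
The plan is to exhibit $\spa$ as the zero set of a smooth submersion, or more robustly, to use the polar decomposition structure already established in the text. Recall that $\tilde a\in\spa$ iff $\rho_H\tilde a^*\rho_H=\tilde a^{-1}$, equivalently (since $\rho_H=-iJ$ and the sign cancels) iff $J\tilde a^* J^{-1}=\tilde a^{-1}$, i.e. $\tilde a^* J \tilde a = J$. So $\spa$ is the isotropy group of the fixed element $J\in Gl_2(\a)$ under the (smooth) action $\tilde a\mapsto \tilde a^* J\tilde a$ of $Gl_2(\a)$ on $M_2(\a)$. The first step is to observe that $\spa$ is a closed subgroup of the Banach-Lie group $Gl_2(\a)$, hence at worst a topological group; the real content is smoothness and complementation, for which closedness alone does not suffice in the Banach setting.

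First I would compute the candidate Lie algebra: differentiating $\tilde a^* J\tilde a = J$ at $\tilde a=1$ gives $\mathfrak g:=\{Z\in M_2(\a): Z^*J+JZ=0\}=\{Z: \rho_H Z^*\rho_H=-Z\}$, the $(-1)$-eigenspace of the bounded linear involution $Z\mapsto \rho_H Z^*\rho_H$ on $M_2(\a)$. Since that map is a (conjugate-linear, but $\RR$-linear) involution, its $\pm1$-eigenspaces give a direct sum decomposition $M_2(\a)=\mathfrak g\oplus\mathfrak m$ with $\mathfrak m=\{Z:\rho_H Z^*\rho_H=Z\}$; both summands are closed, so $\mathfrak g$ is complemented in $M_2(\a)$. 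This identifies the tangent model.

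Next I would produce the chart. Using the polar decomposition results proved just above in the text — namely that $\spa$ is stable under adjoints and that the polar factors of any $\tilde a\in\spa$ stay in $\spa$, with $\tilde u\in\uspa$ characterized by $[\tilde u,J]=0$ and $\tilde b\in\spa^+$ by $J\tilde bJ^{-1}=\tilde b^{-1}$ — I would show $\spa\cong \uspa\times\spa^+$ via polar decomposition, homeomorphically. Then I treat the two factors. For the positive part: $\spa^+=\{e^{Z}: Z=Z^*,\ JZJ^{-1}=-Z\}=\exp(\mathfrak g\cap\a_s\text{-block form})$, i.e. $\spa^+=\exp(\mathfrak g^{sa})$ where $\mathfrak g^{sa}=\{Z\in\mathfrak g: Z^*=Z\}$, and $\exp$ restricted to selfadjoint elements is a diffeomorphism onto positive invertibles, so this is a smooth complemented submanifold near $1$. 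For the unitary part: $\uspa=\u_2(\a)\cap\{J\}'$ is the unitary group of the C$^*$-subalgebra $\{J\}'\subset M_2(\a)$ (the commutant of $J$), hence a smooth complemented Banach-Lie subgroup by the standard fact that the unitary group of a unital C$^*$-algebra is a real-analytic complemented submanifold with exponential chart $u\mapsto e^{iu}$. Assembling: near $1$, the map $(X,Y)\mapsto e^{iX}e^{Y}$ with $X$ in the (self-adjoint, $J$-commuting) unitary Lie algebra and $Y\in\mathfrak g^{sa}$ is a local diffeomorphism onto a neighborhood of $1$ in $\spa$; translating by group elements gives an atlas, and the transition maps are smooth because multiplication and inversion in $Gl_2(\a)$ are. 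Complementation of $\mathfrak g=\mathfrak g^{sa}\oplus i(\text{unitary algebra})$ in $M_2(\a)$ was checked above.

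The main obstacle is the smoothness/complementation of $\uspa$ as a submanifold of $M_2(\a)$ (not merely of $\u_2(\a)$), and gluing the unitary and positive local models into a single chart for $\spa$ compatible with the ambient manifold $M_2(\a)$; equivalently, verifying that the product map $\uspa\times\spa^+\to\spa\subset M_2(\a)$ and the exponential charts are compatible with the decomposition $M_2(\a)=\mathfrak g\oplus\mathfrak m$. The polar decomposition being smooth on $Gl_2(\a)$, and the exponential map being a diffeomorphism on self-adjoint and on skew-adjoint elements respectively, handle this; the only genuinely delicate point is that the two exponential images meet transversally at $1$, which follows because the self-adjoint part of $\mathfrak g$ and the skew part $i\cdot(\text{unitary algebra})$ together span exactly $\mathfrak g$.
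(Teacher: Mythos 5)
Your proposal is correct and follows essentially the same route as the paper: stability of $\spa$ under polar decomposition, a chart at $1$ given by the product of exponentials of the two summands of the Lie algebra (anti-selfadjoint elements commuting with $J$ for the unitary part, selfadjoint elements anticommuting with $J$ for the positive part), and translation by the group action. Your complement $\mathfrak m=\{Z:\rho_H Z^*\rho_H=Z\}$, obtained as the $+1$-eigenspace of the real-linear involution $Z\mapsto\rho_H Z^*\rho_H$, is exactly the explicit supplement $\left\{\left(\begin{smallmatrix} a & b \\ c & a^*\end{smallmatrix}\right): b^*=-b,\ c^*=-c\right\}$ written down in the paper, so the two arguments coincide in substance.
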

\begin{proof}
Let us exhibit a local chart for $1\in\spa$. Denote by $M_2(\a)_s$ and $M_2(\a)_{as}$ the spaces of selfadjoint and anti-selfadjoint  elements of $M_2(\a)$. Consider the space
\begin{equation}\label{horizontalH}
\x=\x_{as}\oplus\x_{s}=\{\tilde{\beta}\in M_2(\a)_{as}: \tilde{\beta}J=J\tilde{\beta}\}\oplus \{\tilde{\gamma}\in M_2(\a)_s: \tilde{\gamma}J=-J\tilde{\gamma}\}.
\end{equation}
Elements $X\in\x$ are of the form $X=\tilde{\beta}+\tilde{\gamma}$,
$$
X=\left( \begin{array}{cc} \beta_{11} &  \beta_{12}  \\  -\beta_{12} & -\beta_{11} \end{array}\right) + \left( \begin{array}{cc} \gamma_{11} &  \gamma_{12}  \\  \gamma_{12} & -\gamma_{11} \end{array}\right)
$$
with $\beta_{11}^*=-\beta_{11}$, and all other entries selfadjoint.
Consider the map
$$
\e: \x\to\spa , \ \ \e(\tilde{\beta}+\tilde{\gamma})=e^{\tilde{\beta}}e^{\tilde{\gamma}}.
$$
Note that $e^{\tilde{\beta}}$ is a unitary element which commutes with $J$, i.e., $e^{\tilde{\beta}}\in\uspa$. The element $e^{\tilde{\gamma}}$ is a positive invertible element of $M_2(\a)$; the fact that the exponent $\tilde{\gamma}$ anticommutes with $J$ means that $e^{\tilde{\gamma}}J=Je^{-\tilde{\gamma}}$, i.e., $e^{\tilde{\gamma}}\in\spa^+$, and thus $\e$ is well defined. If one restricts $\e$ to
$$
\v=\{\tilde{\beta}\in\x_{as}: \|\tilde{\beta}\|<\pi\}\oplus\x_s,
$$
then $\e|_\v$ is a homeomorphism onto 
$$
\w=\{\tilde{a}\in\spa: \|\tilde{u}-1\|<2, \hbox{ where } \tilde{u}=\tilde{a}|\tilde{a}|^{-1}\}.
$$
Clearly, $\v$ and $\w$ are open sets of $\x$ and $\spa$, respectively. If $\tilde{a}\in\w$, the fact that   $\|\tilde{u}-1\|<2$ implies that $\tilde{u}=e^{\tilde{\delta}}$ for a unique $\tilde{\delta}\in M_2(\a)_{as}$ with $\|\tilde{\delta}\|<\pi$ (and thus $\tilde{\delta}$ is a series in powers of $u$). Since $\tilde{u}$ is unitary and belongs to $\spa$, it commutes with $J$. Then, its logarithm $\tilde{\delta}$ commutes with $J$, i.e., $\tilde{\delta}\in \x_1$. On the other hand, since $|\tilde{a}|$ is positive and invertible, it has a unique selfadjoint logarithm $\log(|\tilde{a}|)=\tilde{\epsilon}$. The fact that 
$J|\tilde{a}|J^{-1}=|\tilde{a}|^{-1}$ implies that  
$$
J\tilde{\epsilon}J^{-1}=J\log(|\tilde{a}|)J^{-1}=\log(J|\tilde{a}|J^{-1})=\log(|\tilde{a}|^{-1})=-\log(|\tilde{a}|)=-\tilde{\epsilon},
$$
i.e., $\tilde{\epsilon}\in\x_2$, and thus $\tilde{a}=\e(\delta,\epsilon)$. The inverse of $\e$ is 
$$
\e^{-1}:\w\to\v , \ \e^{-1}(\tilde{a})=\log(\tilde{a}|\tilde{a}|^{-1})+ \log(|\tilde{a}|)\in\x_1\oplus\x_2.
$$
It is apparent that both $\e$ and $\e^{-1}$ are $C^\infty$ maps.
The Banach space  $\x$ on which the neighbourhood $\w$ of $1$ in $\spa$ is modelled, is  complemented in $M_2(\a)$. Indeed, $\x$ can be also presented as
$$
\x=\{X\in M_2(\a): X=\left( \begin{array}{cc} x_{11} &  x_{12}  \\  x_{21} & -x_{11}^* \end{array}\right), \hbox{ with } x_{12}, x_{21}\in\a_s\}.
$$
A supplement for $\x$ is, for instance:
$$
\{\left( \begin{array}{cc} a &  b  \\  c & a^* \end{array}\right): b^*=-b, \  c^*=-c\}.
$$
Charts around other elements of $\spa$ are obtained by translation, using the left action of $\spa$ on itself. 
\end{proof}
\begin{rem}
The differential at the origin of the map $\e$ is the identity. It follows that the Banach-Lie algebra  $\mu(\theta_H)$ of $\spa$ coincides with $\x$: $X\in \mu(\theta_H)$   if 
$$
X=\left( \begin{array}{cc} x_{11} &  x_{12}  \\  x_{21} & -x_{11}^* \end{array}\right),
$$ 
where $x_{12}$ and $x_{21}$ are selfadjoint.
\end{rem}
\begin{prop}
$\b$ and $\t$ are Banach-Lie  subgroups of $\spa$, and complemented submanifolds of $M_2(\a)$. They generate an open and closed subgroup of $\spa$, which contains the connected component of the identity.
\end{prop}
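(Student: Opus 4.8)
The plan is to handle $\t$ and $\b$ separately as complemented submanifolds of $M_2(\a)$, to identify their Banach-Lie algebras $\mathfrak{t},\mathfrak{b}$ inside $\mu(\theta_H)=\x$, and then to read the assertion about the generated subgroup off the splitting $\x=\mathfrak{b}\oplus\mathfrak{t}$. The group $\t$ is immediate: the affine map $\tau\mapsto\left(\begin{smallmatrix}1&\tau\\0&1\end{smallmatrix}\right)$ is a homeomorphism of $\a_s$ onto $\t$, whose image is the closed affine set $1+\mathfrak{t}$ with $\mathfrak{t}=\{\left(\begin{smallmatrix}0&\tau\\0&0\end{smallmatrix}\right):\tau\in\a_s\}$; since $\mathfrak{t}$ is closed and complemented in $M_2(\a)$ (a supplement being $\{X\in M_2(\a):X_{12}\in\a_{as}\}$), $\t$ is a complemented submanifold, and being closed under products and inverses it is an abelian Banach-Lie subgroup of $\spa$.

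For $\b$ I would first solve its defining relation. Writing $\tilde{b}=\left(\begin{smallmatrix}b_{11}&0\\b_{21}&b_{22}\end{smallmatrix}\right)$ and expanding $\tilde{b}^{*}\rho_H\tilde{b}=\rho_H$ one obtains $b_{11}\in G$, $b_{22}=(b_{11}^{*})^{-1}$ and $b_{11}^{*}b_{21}\in\a_s$; hence $(g,h)\mapsto\left(\begin{smallmatrix}g&0\\(g^{*})^{-1}h&(g^{*})^{-1}\end{smallmatrix}\right)$ is a homeomorphism of $G\times\a_s$ onto $\b$, with continuous inverse $\tilde{b}\mapsto(b_{11},\,b_{11}^{*}b_{21})$. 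Since $G$ is open in $\a$ and $g\mapsto(g^{*})^{-1}$ is $C^\infty$ on $G$, this parametrization is a $C^\infty$ embedding into $M_2(\a)$; differentiating it at $(1,0)$ identifies the Lie algebra $\mathfrak{b}=\{\left(\begin{smallmatrix}x&0\\y&-x^{*}\end{smallmatrix}\right):x\in\a,\ y\in\a_s\}$, which is closed and complemented in $M_2(\a)$ (a supplement being $\{X\in M_2(\a):X_{11}=0,\ X_{21}\in\a_{as}\}$). Transporting this chart by the left translations inside $\b$ gives charts at all other points, so $\b$ is a complemented submanifold and a Banach-Lie subgroup of $\spa$. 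Comparing $\mathfrak{b}$ and $\mathfrak{t}$ with the description $\mu(\theta_H)=\x=\{\left(\begin{smallmatrix}x_{11}&x_{12}\\x_{21}&-x_{11}^{*}\end{smallmatrix}\right):x_{12},x_{21}\in\a_s\}$ from the Remark above, one sees at once that $\mathfrak{b}\cap\mathfrak{t}=\{0\}$ and $\mathfrak{b}+\mathfrak{t}=\x$.

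Finally I would consider the multiplication map $m:\b\times\t\to\spa$, $m(\tilde{b},\tilde{c})=\tilde{b}\,\tilde{c}$. Its differential at $(1,1)$ is $(v,w)\mapsto v+w:\mathfrak{b}\times\mathfrak{t}\to\x$, a bounded linear bijection, hence an isomorphism of Banach spaces by the open mapping theorem; thus $m$ is a local diffeomorphism at $(1,1)$ and $\b\cdot\t$ contains an open neighbourhood of $1$ in $\spa$. Consequently the subgroup $\langle\b,\t\rangle$ generated by $\b$ and $\t$ contains a neighbourhood of the identity; a subgroup of a topological group with this property is open, hence also closed (its complement is a union of open cosets), and an open subgroup necessarily contains the connected component of $1$. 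The one step that is not purely formal is the explicit identification of $\b$, and of its complemented tangent space, from $\rho_H\tilde{b}^{*}\rho_H=\tilde{b}^{-1}$; once that computation is in place, the smoothness of the two parametrizations and the open-subgroup argument are routine, so that is where I expect the real work to be.
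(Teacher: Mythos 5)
Your proposal is correct and follows essentially the same route as the paper: parametrize $\b$ by $G\times\a_s$ (and $\t$ by $\a_s$), identify the Lie algebras $a_\b$ and $a_\t$, observe that they are complementary in $\mu(\theta_H)=\x$, deduce that $\b\t$ fills a neighbourhood of the identity, and conclude that the generated subgroup is open, hence closed and contains the identity component. Your use of the inverse function theorem on the multiplication map $m:\b\times\t\to\spa$ just makes explicit the step the paper states more briefly.
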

\begin{proof}
First note that the diagonal entries of elements in $\b$ must be invertible elements in $\a$. Elementary matrix computations show that $\tilde{b}\in\b$ if and only if it is of the form
$$
\tilde{b}=\left( \begin{array}{cc} b & 0 \\ x & (b^*)^{-1} \end{array}\right),
$$
with $b^*xb^{-1}=x^*$, or, equivalently, $b^*x$ selfadjoint. Thus, $\b$ can be parametrized by 
$$
\mathbb{B}=\{(b,x): b \hbox{ invertible in } \a \hbox{ and } b^*x\in\a_s\}.
$$
This set $\mathbb{B}$ is globally diffeomorphic to the set 
$$
G\times\a_s=\{(a,y): a \hbox{ invertible in } \a , y=y^*\}.
$$
which is a complemented submanifold of $\a^2$. The diffeomorphism and its inverse are given by $(b,x)\mapsto (b,b^*x)$ and $(a,y)\mapsto (a, (a^*)^{-1}y)$. Thus, $\b$ is globally diffeomorphic to a submanifold of $\a^2$. 
Moreover, this diffeomorphism extends to an open subset of $M_2(\a)$:
$$
M_2(\a)=\{ \left( \begin{array}{cc} a & c \\ y+z & d \end{array}\right): (a,y)\in G\times \a_s,  \ z, c, d\in\a, \ z^*=-z\}\to M_2(\a),
$$
$$
\left( \begin{array}{cc} a & c \\ y+z & d \end{array}\right)\mapsto \left( \begin{array}{cc} a & c \\ (a^*)^{-1}y+z & (a^*)^{-1}+d \end{array}\right)
$$
which maps $G\times \a_s$ onto $\b$.

The Banach-Lie algebra $a_\b$ of $\b$ can be computed using this parametrization. If $b(t)\in G_\a$ and $x(t)\in\a$  are smooth curves such that $b^*(t)x(t)$ is selfadjoint,  $b(0)=1$, $\dot{b}(0)=y_{11}$, $x(0)=0$ and $\dot{x}(0)=y_{21}$, then
$$
\tilde{b}(t)=\left( \begin{array}{cc} b(t) & 0 \\ x(t) & (b^*(t))^{-1} \end{array}\right)
$$
is a smooth curve in $\b$ with $\tilde{b}(0)=1$ and $\dot{\tilde{b}}(0)=Y$,
$$
Y=\left( \begin{array}{cc} y_{11} & 0 \\ y_{21} & -y_{11}^* \end{array}\right),
$$
where $\frac{d}{dt}\{ b^*(t)x(t)\}=\dot{b^*}(t)x(t)+b^*(t)\dot{x}(t)\in\a_s$; in particular, at $t=0$, this implies that $y_{21}$ is selfadjoint. Thus, the Banach-Lie algebra $a_\b$ of $\b$  is
$$
a_\b=\{Y=\left( \begin{array}{cc} y_{11} & 0 \\ y_{21} & -y_{11}^* \end{array}\right): y_{21}^*=y_{21}\}.
$$

The subgroup $\t$ is parametrized by $\a_s$, and can be proved to be a submanifold of $M_2(\a)$ in a similar (simpler) fashion. Also, it is apparent that its Banach-Lie algebra $a_\t$ of $\t$  is 
$$
a_\t=\{Z=\left( \begin{array}{cc} 0 & z_{12} \\ 0 & 0 \end{array}\right): z_{12}^*=z_{12}\}.
$$

We claim that the subgroups $\b$ and $\t$ generate a closed and open subgroup of $\spa$. To this effect, note that the Banach-Lie algebras of these groups are in direct sum, and its sum is the Banach-Lie algebra $\mu(\theta_H)$ of $\uspa$:
$$
\mu(\theta_H)=a_\b\oplus a_\t.
$$
It follows that there is a neighbourhood $\w$ of $1$ in $\spa$ where any element is the product of elements in $\b$ and $\t$. Let $\tilde{a}_0\in\spa$ which is a product of elements in $\b$ and $\t$. Then 
$$
\u_0=\{\tilde{a}: \tilde{a}_0^{-1}\tilde{a}\in\u\}
$$
is an open neighbourhood of $\tilde{a}_0$ in $\spa$. It follows that the set of these products is an open subgroup. The relation 
$$
\tilde{a}\sim\tilde{b} \ \hbox{ if and only if } \  \tilde{a}^{-1}\tilde{b}\in \b\t
$$ 
is an equivalence relation. It follows that this subgroup is a union of connected components of $\spa$, containing  the connected component of the identity.
\end{proof}

\begin{rem}
If $\a$ is a von Neumann algebra, then $\spa$ is connected. Since $\spa^+$ is clearly connected (in fact, contractible), one needs to show that $\u_2(\a)\cap \{J\}'$ is connected. Since $J$ is anti-selfadjoint, it follows that $\{J\}'\subset M_2(\a)$ is a von Neumann algebra, and therefore $$\u_2(\a)\cap \{J\}'$$ is the unitary group of a von Neumann algebra, thus connected.
\end{rem}

\begin{teo}
The unitary part $$\u_2(\a)\cap \{J\}'$$ of $\spa$ is isomorphic to $\u_\a\times\u_\a$. The group $\Pi_0(\spa)$ of connected components of $\spa$, is isomorphic to $\Pi_0(\u_\a)\times \Pi_0(\u_\a)$.
\end{teo}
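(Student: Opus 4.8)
The plan is to split the statement into its two halves and, in both, transport the computation to the diagonal model $\rho_D$ by conjugating with the unitary $U$ of \eqref{rhotheta}.

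For the first half, recall from the elementary observations above that the unitary part of $\spa$ is $\uspa=\u_2(\a)\cap\{J\}'$, and since $\rho_H=-iJ$ one has $\{J\}'=\{\rho_H\}'$. The map $\mathrm{Ad}(U)\colon \tilde a\mapsto U\tilde a U^*$ is a (linear, hence $C^\infty$) $*$-automorphism of $M_2(\a)$, so in particular a Banach-Lie group automorphism of $\u_2(\a)$; from $U\rho_DU^*=\rho_H$ a one-line computation gives $\mathrm{Ad}(U)(\{\rho_D\}')=\{\rho_H\}'$, so $\mathrm{Ad}(U)$ restricts to an isomorphism of $\u_2(\a)\cap\{\rho_D\}'$ onto $\uspa$. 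Finally, since $\rho_D=\mathrm{diag}(1,-1)$, an element of $M_2(\a)$ commutes with $\rho_D$ if and only if it is diagonal, so $\{\rho_D\}'$ is the algebra of matrices $\mathrm{diag}(a,d)$ with $a,d\in\a$, whose unitaries are exactly the $\mathrm{diag}(u_1,u_2)$ with $u_1,u_2\in\u_\a$; this subgroup is patently isomorphic to $\u_\a\times\u_\a$. Composing, $\uspa\cong\u_\a\times\u_\a$.

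For the second half, I would invoke the polar decomposition already established above: each $\tilde a\in\spa$ writes as $\tilde a=\tilde u|\tilde a|$ with $\tilde u=\tilde a|\tilde a|^{-1}\in\uspa$ and $|\tilde a|=(\tilde a^*\tilde a)^{1/2}\in\spa^+$, both factors depending continuously on $\tilde a$ (continuity of the square root and of inversion on positive invertibles), while $(\tilde u,\tilde p)\mapsto\tilde u\tilde p$ is continuous; hence $\tilde a\mapsto(\tilde u,|\tilde a|)$ is a homeomorphism $\spa\cong\uspa\times\spa^+$. As noted in the remark above, $\spa^+$ is contractible --- concretely $\tilde\gamma\mapsto e^{\tilde\gamma}$ is a homeomorphism of the Banach space $\x_s$ onto $\spa^+$ --- so in particular it is connected. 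Therefore $\Pi_0(\spa)=\Pi_0(\uspa\times\spa^+)=\Pi_0(\uspa)$, and combining with the first half and the fact that $\Pi_0$ carries products to products, $\Pi_0(\spa)=\Pi_0(\u_\a\times\u_\a)=\Pi_0(\u_\a)\times\Pi_0(\u_\a)$.

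None of this is deep; the content is organizational. The point deserving the most care is the interplay of $\mathrm{Ad}(U)$ with the commutants --- verifying that $\mathrm{Ad}(U)$ sends $\{\rho_D\}'$ onto $\{\rho_H\}'$ and restricts to the unitary groups --- and, for the $\Pi_0$ statement, making sure the polar decomposition is a genuine homeomorphism $\spa\cong\uspa\times\spa^+$, i.e. that taking the modulus and the unitary part are continuous and (already proven in the preceding pages) that both factors remain inside $\spa$. With those in hand, the contractibility of $\spa^+$ and the functoriality of $\Pi_0$ close the argument.
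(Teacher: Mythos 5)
Your proof is correct, and its second half --- the homeomorphism $\spa\cong\uspa\times\spa^+$ via polar decomposition together with the contractibility of $\spa^+$ --- is exactly the paper's argument for the $\Pi_0$ statement. The first half, however, takes a genuinely different route. The paper stays entirely in the half-space picture and works inside $\{J\}'$ directly: it introduces the retraction $\Gamma:\uspa\to\u_\a$, $\left(\begin{smallmatrix} a & b\\ -b & a\end{smallmatrix}\right)\mapsto a+ib$, exhibits the splitting $u\mapsto\mathrm{diag}(u,u)$, and then identifies $\ker\Gamma$ with $\u_\a$ by a small spectral computation (normal elements $a$ with $a^*a=aa^*=\frac12(a+a^*)$ have spectrum in the circle $|z-\frac12|=\frac12$ and correspond to unitaries via $a\mapsto 2a-1$); the product decomposition then comes from the split exact sequence. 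Your conjugation by $U$ bypasses the kernel computation entirely: $\mathrm{Ad}(U)$ carries $\{\rho_D\}'$ --- the diagonal subalgebra, whose unitary group is visibly $\u_\a\times\u_\a$ --- onto $\{\rho_H\}'=\{J\}'$, and this is in effect the description of $\uspb$ that the paper itself records at the start of Section 5. Your route is shorter and reuses an already-established conjugacy; the paper's buys an explicit isomorphism written intrinsically in the half-space coordinates, without passing through the disk model. Both are complete proofs of the stated isomorphisms.
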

\begin{proof}

Consider the map
$$
\{J\}'\to \a , \ \ \left( \begin{array}{cc} a & b \\ -b & a \end{array}\right)\mapsto a+ib .
$$
This map is an injective C$^*$-homomorphism; thus, its restriction to $\u_2(\a)\cap \{J\}'$
$$
\Gamma: \uspa\to\u_\a
$$ is a group homomorphism. It is a retraction: the map $u\mapsto \left( \begin{array}{cc} u & 0 \\ 0 & u \end{array}\right)$ 
is a cross section for $\Gamma$ and a group homomorphism. 
By straightforward computations, the kernel of $\Gamma$ consist of matrices 
$$
\left( \begin{array}{cc} a & i(a-1) \\ -i(a-1) & a \end{array}\right)
$$
with $a^*a=aa^*=\frac12(a+a^*)$.  Then $a$ is a normal element, which is a zero of the continuous function $f(z)=|z|^2-Re(z)$. Therefore, the spectrum of $a$ is contained in the zero set of $f$, namely $\{z\in\mathbb{C}: |z-\frac12|=\frac12\}$.
The map $z\mapsto 2z-1$ sends this circle to the unit circle, and since it is a polynomial map, it sends elements $a$ as above onto normal elements with spectrum in the unit circle, i.e., unitary elements of $\a$. Conversely, if $u\in \u_\a$, elementary computations show that $a=\frac12(u+1)$ satisfies $a^*a=aa^*=\frac12(a+a^*)$. Moreover,  it is easy to verify that the map
$$
\ker \Gamma \to \u_\a , \ \left( \begin{array}{cc} a & i(a-1) \\ -i(a-1) & a \end{array}\right)\mapsto 2a-1
$$
is a group homomorphism, thus a bicontinuous isomorphism. 

Therefore, since $\Gamma$ splits, one has that (by means of an explicit isomorphism)
$$
\uspa \stackrel{\simeq}{\to} \u_\a\times \_\a.
$$
The polar decomposition induces the isomorphism between  $\Pi_0(\spa)$ and $\Pi_0(\u_U(\theta_H))$,  because the positive part $\u^+(\theta_H)$ is contractible.
\end{proof}

\begin{coro}
$\b$ and $\t$ generate $\spa$ if and only if $U_\a$ is connected.
\end{coro}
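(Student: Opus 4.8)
The plan is to combine the preceding Proposition with the description of the component group $\Pi_0(\spa)$ obtained in the Theorem; throughout I write $\u_\a$ for the unitary group of $\a$ (what the statement calls $U_\a$). Set $H=\langle\b,\t\rangle$, the subgroup of $\spa$ generated by $\b$ and $\t$. By the Proposition, $H$ is an open subgroup of $\spa$; an open subgroup is automatically closed, so $H$ is a union of connected components of $\spa$ and contains the identity component $\spa_0$. Hence, writing $\pi\colon\spa\to\Pi_0(\spa)$ for the projection onto the component group, $H=\spa$ if and only if $\pi(H)=\Pi_0(\spa)$; and since $H$ is generated by $\b$ and $\t$, the subgroup $\pi(H)$ is generated by $\pi(\b)$ and $\pi(\t)$.

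Now $\t$ is homeomorphic to $\a_s$ by the parametrization given in the previous Proposition, hence connected, so $\pi(\t)=\{1\}$ and $\pi(H)=\pi(\b)$. To compute $\pi(\b)$, recall that every element of $\b$ has the form $\tilde b=\left(\begin{array}{cc} b & 0 \\ x & (b^*)^{-1}\end{array}\right)$ with $b\in G$ and $b^*x\in\a_s$. Scaling $x\mapsto tx$, $t\in[0,1]$, keeps $\tilde b$ inside $\b$ and joins it to the diagonal element $\mathrm{diag}(b,(b^*)^{-1})$. Writing $b=v|b|$ with $v$ unitary, one has $\mathrm{diag}(b,(b^*)^{-1})=\mathrm{diag}(v,v)\,\mathrm{diag}(|b|,|b|^{-1})$, where $\mathrm{diag}(|b|,|b|^{-1})\in\spa^+$ is joined to $1$ inside $\spa^+$ by $t\mapsto\mathrm{diag}(|b|^{t},|b|^{-t})$. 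Thus the connected component in $\spa$ of an arbitrary element of $\b$ agrees with that of the unitary $\mathrm{diag}(v,v)\in\uspa$, with $v$ ranging over all of $\u_\a$. Under the explicit isomorphism $\uspa\simeq\u_\a\times\u_\a$ of the Theorem, which splits the retraction $\Gamma$ via the section $v\mapsto\mathrm{diag}(v,v)$, the element $\mathrm{diag}(v,v)$ corresponds to $(1,v)$. Therefore $\pi(\b)=\{1\}\times\Pi_0(\u_\a)$ inside $\Pi_0(\spa)\simeq\Pi_0(\u_\a)\times\Pi_0(\u_\a)$.

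Putting this together, $\pi(H)=\{1\}\times\Pi_0(\u_\a)$, and this equals $\Pi_0(\spa)=\Pi_0(\u_\a)\times\Pi_0(\u_\a)$ exactly when $\Pi_0(\u_\a)$ is trivial, i.e. exactly when $\u_\a$ is connected; this is the asserted equivalence. (The direction ``$\u_\a$ connected $\Rightarrow$ $\b,\t$ generate $\spa$'' is also immediate this way, since then $\Pi_0(\spa)$ is trivial, so $\spa=\spa_0\subseteq H$.) The one step requiring care is the final identification: one must verify that the image of $\b$ in $\Pi_0(\spa)$ is precisely one of the two $\Pi_0(\u_\a)$-factors, and not a diagonal copy or the whole group — which is exactly why it matters to track $\mathrm{diag}(v,v)$ through the splitting of $\Gamma$ from the proof of the Theorem and to check that $\mathrm{diag}(|b|,|b|^{-1})$ lies in the contractible part $\spa^+$, so that only the unitary factor contributes in $\Pi_0$.
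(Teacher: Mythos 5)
The paper states this corollary without proof, as an immediate consequence of the preceding Proposition (the subgroup generated by $\b$ and $\t$ is open and closed, hence a union of connected components containing the identity component) and Theorem (the computation of $\Pi_0(\spa)$); your derivation is correct and is evidently the intended one. One small remark on the point you flag at the end: under the C$^*$-isomorphism $\{J\}'\simeq\a\oplus\a$ underlying the Theorem, $\mathrm{diag}(v,v)$ corresponds to $(v,v)$, so the image of $\b$ in $\Pi_0(\u_\a)\times\Pi_0(\u_\a)$ is naturally the diagonal copy rather than a factor — but this is only a matter of which identification one uses, and it does not affect the argument, since the diagonal is likewise a proper subgroup exactly when $\Pi_0(\u_\a)$ is nontrivial.
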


\section{The unitary group $\spb$ of the form $\theta_D$}

As remarked above, $\spb$ and $\spa$ are conjugate via the unitary operator $U$  given in (\ref{U}). Therefore the same properties proved for $\spa$ also hold for $\spb$. Thus, $\uspb$  is a Banach-Lie subgroup of $Gl_2(\a)$ and  it is closed under the polar decomposition: if $\tilde{a}\in\spb$ and $\tilde{a}=\tilde{u}|\tilde{a}|$ is its polar decomposition, then $\tilde{u}, |\tilde{a}|\in\spb$. The fact that the unitary part $\tilde{u}$ belongs to $\spb$ means that $\spb$ commutes with $\rho_D$. It is elementary that this implies that the matrix of $\tilde{u}$ is of the form
$$
\tilde{u}=\left( \begin{array}{cc} u_1 & 0 \\ 0 & u_2 \end{array} \right),
$$
with $u_1, u_2$  in $\u_\a$.
Denote by $\tilde{\lambda}=\log|\tilde{a}|$  the unique selfadjoint logarithm of the (positive invertible) element $|\tilde{a}|$.  The fact that $\rho|\tilde{a}|=|\tilde{a}|^{-1}\rho_D$, means that
$$
\rho_D\tilde{\lambda}=-\tilde{\lambda}\rho_D.
$$
On the other hand, positive elements $\tilde{r}\in\spb$ satisfy that $\rho_D\tilde{r}\rho_D\tilde{r}=1$. In particular, 
if 
$$
\tilde{r}=\left( \begin{array}{cc} r_{11} & r_{12} \\ r_{12}^*  & r_{22} \end{array} \right),
$$
then $r_{11}\ge 1$, because $r_{11}\ge 0$ and $r_{11}^2-r_{12}r_{12}^*=1$.  

The main issue in introducing this description of $\spb$ is the following result: 
\begin{teo}
$\spb$ acts on $\k_D$ by left multiplication: if $\tilde{a}\in\spb$ and $\left(\begin{array}{c} x_1\\ x_2 \end{array} \right)\in\k_D$, then $\tilde{a}\left(\begin{array}{c} x_1\\ x_2 \end{array} \right)\in\k_D$.
\end{teo}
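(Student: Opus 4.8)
The statement has two halves, and only one of them carries content. Given $\tilde a\in\spb$ and $\eta=\tilde a\,\xi$ with $\xi=\left(\begin{array}{c}x_1\\x_2\end{array}\right)\in\k_D$, write $\eta=\left(\begin{array}{c}z_1\\z_2\end{array}\right)$. Because $\tilde a$ preserves $\theta_D$ we get $z_1^*z_1-z_2^*z_2=\theta_D(\eta,\eta)=\theta_D(\xi,\xi)=1$ for free, so the whole task is to show that the first coordinate $z_1$ is invertible. The plan is to reduce this to the positive part of $\tilde a$ via polar decomposition, in the spirit of the invertibility argument in Lemma~\ref{Uactua}.

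Concretely, I would use $\tilde a=\tilde u\,|\tilde a|$ with $\tilde u,|\tilde a|\in\spb$ (recorded in the discussion preceding the theorem). Since $\xi\mapsto\tilde a\,\xi$ is the composition of $\xi\mapsto|\tilde a|\,\xi$ followed by left multiplication by $\tilde u$, it is enough to check that $\tilde u$ and $|\tilde a|$ each send $\k_D$ into $\k_D$. For $\tilde u=\left(\begin{array}{cc}u_1&0\\0&u_2\end{array}\right)$ with $u_1,u_2\in\u_\a$ this is immediate: $(u_1x_1)^*(u_1x_1)-(u_2x_2)^*(u_2x_2)=x_1^*x_1-x_2^*x_2=1$ and $u_1x_1\in G$.

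This leaves a positive $\tilde r=\left(\begin{array}{cc}r_{11}&r_{12}\\r_{12}^*&r_{22}\end{array}\right)\in\spb$. As already noted in the text, $\rho_D\tilde r\rho_D\tilde r=1$ yields $r_{11}^2-r_{12}r_{12}^*=1$ and $r_{11}\ge 1$, so $r_{11}\in G$. Then the first coordinate of $\tilde r\,\xi$ factors as
$$
z_1=r_{11}x_1+r_{12}x_2=r_{11}\bigl(1+r_{11}^{-1}r_{12}\,x_2x_1^{-1}\bigr)x_1,
$$
so $z_1\in G$ will follow once I show $\bigl\|r_{11}^{-1}r_{12}\,x_2x_1^{-1}\bigr\|<1$. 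Here $\|r_{11}^{-1}r_{12}\|^2=\|r_{11}^{-1}r_{12}r_{12}^*r_{11}^{-1}\|=\|r_{11}^{-1}(r_{11}^2-1)r_{11}^{-1}\|=\|1-r_{11}^{-2}\|\le 1-\|r_{11}\|^{-2}<1$, while $\xi\in\k_D$ forces $\|x_2x_1^{-1}\|<1$ by the computation following the definition of $\varphi_D$. Multiplying the two bounds finishes the proof.

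I expect essentially all the difficulty to be organizational rather than computational: the form-preserving half is automatic, and the point is to recognize that polar decomposition isolates the genuinely nontrivial case, after which the whole thing collapses to the single estimate $\|r_{11}^{-1}r_{12}\|<1$ coming from the defining identity $r_{11}^2-r_{12}r_{12}^*=1$ of the positive elements of $\spb$. A direct approach working with $\tilde a$ itself would have to establish both left and right invertibility of $z_1$ --- the former from $z_1^*z_1\ge 1$, the latter from $\tilde a^{-1}=\rho_D\tilde a^*\rho_D$ --- and controlling $z_1z_1^*$ that way seems more cumbersome than passing through $|\tilde a|$.
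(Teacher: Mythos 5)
Your proof is correct, and it follows the paper's skeleton --- reduce via the polar decomposition $\tilde a=\tilde u|\tilde a|$ to the diagonal unitary part (trivial) and to a positive element $\tilde r\in\spb$ --- but it handles the positive part by a genuinely different, and sharper, estimate. The paper only proves the invertibility of $r_{11}x_1+r_{12}x_2$ under the extra hypothesis $\|r_{12}\|<1$, which a general positive element of $\spb$ need not satisfy; it then repairs this by passing to $|\tilde a|^{1/n}=e^{\lambda/n}$ for $n$ large (so that the off-diagonal entry becomes small) and iterating $n$ times. You avoid that detour entirely: from the defining identity $r_{12}r_{12}^*=r_{11}^2-1$ you compute
$$
\|r_{11}^{-1}r_{12}\|^2=\|r_{11}^{-1}(r_{11}^2-1)r_{11}^{-1}\|=\|1-r_{11}^{-2}\|=1-\|r_{11}\|^{-2}<1,
$$
which holds for \emph{every} positive $\tilde r\in\spb$, and combined with $\|x_2x_1^{-1}\|<1$ this makes $1+r_{11}^{-1}r_{12}\,x_2x_1^{-1}$ invertible in one step. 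The only point worth being explicit about is the strictness of the final inequality: $1-r_{11}^{-2}$ is a positive element whose norm is $1-\min\sigma(r_{11}^{-2})=1-\|r_{11}\|^{-2}$, and this is strictly less than $1$ precisely because $r_{11}$ is a bounded element, so $\|r_{11}\|^{-2}>0$. With that observation in place your argument is complete and is arguably cleaner than the published one, since it removes the approximation-and-induction step.
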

\begin{proof}
Recall that $\left(\begin{array}{c} x_1\\ x_2 \end{array} \right)=(x_1,x_2)\in\k_D$ means that $\theta_D(\left(\begin{array}{c} x_1\\ x_2 \end{array} \right),\left(\begin{array}{c} x_1\\ x_2 \end{array} \right))=1$ and that $x_1\in G$. Since $\tilde{a}$ preserves $\theta_D$, it is clear that $\theta_D(\tilde{a}\left(\begin{array}{c} x_1\\ x_2 \end{array} \right),\tilde{a}\left(\begin{array}{c} x_1\\ x_2 \end{array} \right))=1$. We must show that the first coordinate of $\tilde{a}\left(\begin{array}{c} x_1\\ x_2 \end{array} \right)$ is invertible in $\a$. Clearly, it suffices to prove this fact separately for the unitary part $\tilde{u}$ and the absolute value $|\tilde{a}|$. The first assertion is clear:
$$
\tilde{u}\left(\begin{array}{c} x_1\\ x_2 \end{array} \right)=\left( \begin{array}{cc} u_1 & 0 \\ 0 & u_2 \end{array} \right)\left( \begin{array}{c} x_1  \\ x_2 \end{array} \right)=\left( \begin{array}{c} u_1x_1  \\ u_2 x_2 \end{array} \right),
$$
and $u_1x_1$ is invertible. 

For the second assertion, we claim that if $\tilde{r}$ is a positive element in $\spb$,
$$
\tilde{r}=\left( \begin{array}{cc} r_{11} & r_{12} \\ r_{12}^* & r_{22} \end{array} \right)
$$
which satisfies that $\|r_{12}\|<1$, then the first coordinate of  $\tilde{r}\left(\begin{array}{c} x_1\\ x_2 \end{array} \right)$ is invertible for any $\left(\begin{array}{c} x_1\\ x_2 \end{array} \right)\in\k_D$.
Indeed, the first coordinate of this product is $r_{11}x_1+r_{12}x_2$. Since $x_1$ is invertible, this sum is invertible if and only if $r_{11}+r_{12}x_2x_1^{-1}$  is invertible. Since $\|x_2x_1^{-1}\|<1$ and $\|r_{12}\|<1$, it follows that $\|r_{12}x_2x_1^{-1}\|<1$. Recall from above that $r_{11}\ge 1$. These facts imply that   $r_{11}+r_{12}x_2x_1^{-1}$  is invertible.

Note that for any $n\ge 1$, $|\tilde{a}|^{1/n}=e^{\frac1{n}\lambda}\in\spb$, because $\frac1{n}\lambda$ anti-commutes with $\rho_D$. Note also that $|\tilde{a}|^{1/n}\to 1$ as $n\to \infty$. Thus, there exists $n\ge 0$ such that $r=|\tilde{a}|^{1/n}$ satisfies that $\|r_{12}\|<1$. It follows from the above observation, that for any $\left(\begin{array}{c} x_1\\ x_2 \end{array} \right)\in\k_D$,  $|\tilde{a}|^{1/n}\left(\begin{array}{c} x_1\\ x_2 \end{array} \right)\in \k_D$. Then, inductively,
$$
|\tilde{a}| \left(\begin{array}{c} x_1\\ x_2 \end{array} \right)=|\tilde{a}|^{1/n}(\dots (|\tilde{a}|^{1/n}\left(\begin{array}{c} x_1\\ x_2 \end{array} \right))\in\k_D.
$$
\end{proof}
Therefore:
\begin{coro}
$\spa$ acts on $\k_H$ by left multiplication.
\end{coro}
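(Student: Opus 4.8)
The plan is to reduce the statement to the assertion for $\spb$ acting on $\k_D$, which was just proved, transporting everything through the unitary $U$ of (\ref{U}).

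The only computational point, and it is immediate, is to record the conjugacy at the group level. Since $U\rho_DU^*=\rho_H$ by (\ref{rhotheta}), the characterization ``$\tilde{a}\in\spa$ iff $\rho_H\tilde{a}^*\rho_H=\tilde{a}^{-1}$'' translates, upon writing $\tilde{a}=U\tilde{b}U^*$, into $\rho_D\tilde{b}^*\rho_D=\tilde{b}^{-1}$; hence $\spa=U\,\spb\,U^*$. (This was already announced in Section 3, where it is noted that $\spa$ and $\spb$ are conjugate via $U$.)

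Next I would combine this with Lemma \ref{Uactua}. Note first that, since $U$ is unitary, that lemma says equivalently that $\eta\in\k_H$ if and only if $U^*\eta\in\k_D$. So, given $\tilde{a}\in\spa$ and $\left(\begin{array}{c} x_1\\ x_2 \end{array}\right)\in\k_H$, we have $U^*\left(\begin{array}{c} x_1\\ x_2 \end{array}\right)\in\k_D$. Writing $\tilde{a}=U\tilde{b}U^*$ with $\tilde{b}\in\spb$, we get
$$
U^*\,\tilde{a}\left(\begin{array}{c} x_1\\ x_2 \end{array}\right)=\tilde{b}\,U^*\left(\begin{array}{c} x_1\\ x_2 \end{array}\right),
$$
and the right-hand side lies in $\k_D$ by the preceding Theorem. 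Applying Lemma \ref{Uactua} once more gives $\tilde{a}\left(\begin{array}{c} x_1\\ x_2 \end{array}\right)\in\k_H$, which is the claim.

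There is essentially no obstacle: the entire content sits in the Theorem for the disk model, and the corollary is a formal translation via $U$. The only point requiring a word of care is that Lemma \ref{Uactua} is stated as a passage $\k_D\to\k_H$ through $U$, so one should observe it is an equivalence and that unitarity of $U$ gives the reverse passage $\k_H\to\k_D$ through $U^*$. Alternatively one could argue directly in the half-space, decomposing $\tilde{a}=\tilde{u}|\tilde{a}|$, disposing of $\tilde{u}\in\uspa$ (which commutes with $J$) at once and treating $|\tilde{a}|\in\spa^+$ via small powers $|\tilde{a}|^{1/n}\to 1$ together with $\|x_2x_1^{-1}\|<1$; but this just repeats the proof of the Theorem in the other model, so the conjugation argument is the efficient route.
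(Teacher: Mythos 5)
Your argument is correct and is exactly the one the paper intends: the corollary follows the theorem for $\spb$ on $\k_D$ with a bare ``Therefore,'' the implicit content being precisely the conjugation $\spa=U\,\spb\,U^*$ together with Lemma \ref{Uactua} identifying $\k_H=U\k_D$. Your write-up just makes that transport explicit, so there is nothing to add.
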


\section{The actions of $\spb$ and $\spa$ on $\d$ and $\h$}

In this section we prove that $\u(\theta_H)$ acts in $TG^+$ (in fact, we prove this fact for the disk model  $\d$ of $TG^+$). First let us note that there is the natural inmersion $G^+\hookrightarrow TG^+$, $a\mapsto 0+ia$. $G^+$ is a homogeneous space of $G$, with the left action $g\cdot  a=(g^*)^{-1}ag^{-1}$. This action is the main feature in studying the geometry of $G^+$. We shall see that it can be regarded as a restriction of the action of $\uspa$ on $TG^+$ (see \cite{cprIEOT}, \cite{cprIJM}, \cite{cprILLINOIS}). Note that $G$ is a subgroup of $\uspa$, via the injective group homomorphism
$$
G\hookrightarrow \uspa \ , \ \ g\mapsto \left( \begin{array}{cc} g & 0 \\ 0 & (g^*)^{-1}\end{array} \right).
$$

In order to introduce the actions of $\spb$ and $\spa$ on $\d$ and $\h$, respectively, we  need the  maps:
$$
\varphi_D:\k_D\to \d \ , \ \varphi(x_1,x_2)=x_2x_1^{-1},
$$
and 
$$
\varphi_H:\k_H\to \h \ , \ \varphi(x_1,x_2)=x_2x_1^{-1}.
$$

\begin{defi}

\noindent

\begin{itemize}

\item
Given $z\in\d$, consider $(1,z)$ and compute $\theta_D((1,z),(1.z))=1-z^*z$. Since $\|z\|<1$, this element is positive and invertible. Then $(1-z^*z)^{-1/2},z(1-z^*z)^{-1/2})\in\k_D$.
If $\tilde{a}=\left( \begin{array}{cc} a_{11} & a_{12} \\ a_{21} & a_{22} \end{array} \right)\in\spb$, put
$$
\tilde{a}\cdot z:=\varphi_D(\tilde{a}\left( \begin{array}{l} (1-z^*z)^{-1/2} \\ z(1-z^*z)^{-1/2})\end{array}\right))=(a_{21}+a_{22}z)(a_{11}+a_{12}z)^{-1}.
$$
\item
Analogously, given $h\in\h$, $\theta_H((1,h),(1,h))=2\  Im(h)$. \\ Then
$(\frac{1}{\sqrt{2}}Im(h)^{-1/2}, \frac{1}{\sqrt{2}}h Im(h)^{-1/2})\in\k_H$. For $\tilde{a}=\left( \begin{array}{cc} a_{11} & a_{12} \\ a_{21} & a_{22} \end{array} \right)\in\spa$, put 
$$
\tilde{a}\cdot h:=\varphi_H(\tilde{a}\left( \begin{array}{l}  \frac{1}{\sqrt{2}} Im(h)^{-1/2} \\ \frac{1}{\sqrt{2}} h  Im(h)^{-1/2}\end{array}\right))= (a_{21} + a_{22}h)(a_{11}+ a_{12}h)^{-1}.
$$ 
\end{itemize}

\end{defi}

\begin{rem}
Both actions are well defined, and they are, indeed, left actions.

\end{rem}

Let us prove that these left actions are transitive. More specifically:
\begin{prop}
The action of the  subgroup $\b$ on $\h$ is transitive.
\end{prop}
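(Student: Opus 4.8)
The plan is to exhibit, for an arbitrary $h\in\h$, an element $\tilde b\in\b$ with $\tilde b\cdot i=h$, where $i\in\h$ is the base point (note $Im(i)=1\in G^+$). Recall from the computation of $\b$ that a general element of $\b$ has the form $\tilde b=\left(\begin{array}{cc} b & 0 \\ x & (b^*)^{-1}\end{array}\right)$ with $b\in G$ and $b^*x\in\a_s$. Using the action formula $\tilde a\cdot h=(a_{21}+a_{22}h)(a_{11}+a_{12}h)^{-1}$ from the definition just above, we get
$$
\tilde b\cdot i=(x+(b^*)^{-1}i)(b)^{-1}=xb^{-1}+i(b^*)^{-1}b^{-1}=xb^{-1}+i(bb^*)^{-1}.
$$
So the task reduces to solving, for given $h$, the system $Im(h)=(bb^*)^{-1}$ and $Re(h)=xb^{-1}$, subject to the constraint $b^*x=x^*b$ (selfadjointness).

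The key steps, in order, are as follows. First, since $Im(h)\in G^+$, the element $Im(h)^{-1}$ is also positive and invertible, so it has a positive invertible square root; set $b=Im(h)^{-1/2}$, which is selfadjoint and invertible, and then $bb^*=b^2=Im(h)^{-1}$, giving the imaginary part correctly. Second, with this $b$ fixed, solve the second equation by $x=Re(h)\,b=Re(h)\,Im(h)^{-1/2}$. Third, check the constraint: $b^*x=b\,Re(h)\,b=Im(h)^{-1/2}Re(h)\,Im(h)^{-1/2}$, which is visibly selfadjoint since $Re(h)$ is selfadjoint and $b=b^*$. Hence this $\tilde b$ indeed lies in $\b$, and by construction $\tilde b\cdot i=h$. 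Fourth, invoke that the action is a left action (stated in the Remark preceding the Proposition): given two points $h_1,h_2\in\h$, pick $\tilde b_1,\tilde b_2\in\b$ with $\tilde b_j\cdot i=h_j$; then $\tilde b_2\tilde b_1^{-1}\in\b$ (since $\b$ is a group) carries $h_1$ to $h_2$. Therefore $\b$ acts transitively on $\h$.

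I do not anticipate a serious obstacle here; the proof is essentially the explicit solution of a two-equation system exploiting the positivity of $Im(h)$ to extract a square root. The one point requiring a little care is confirming that the candidate matrix $\tilde b$ actually belongs to $\b$ — i.e., that it preserves $\theta_H$ — but this is exactly the selfadjointness condition $b^*x\in\a_s$ characterizing $\b$, which we verify directly. A secondary bookkeeping point is that the action formula implicitly requires $a_{11}+a_{12}h=b$ to be invertible, which holds since $b=Im(h)^{-1/2}\in G$; no further invertibility check is needed because $a_{12}=0$ here. One could alternatively phrase the whole argument via the fibration $\varphi_H:\k_H\to\h$ and lift $h$ to the point $(\tfrac{1}{\sqrt 2}Im(h)^{-1/2},\tfrac{1}{\sqrt 2}h\,Im(h)^{-1/2})\in\k_H$, then observe $\tilde b$ maps the standard lift of $i$ to this lift; but the direct computation above is the cleanest route.
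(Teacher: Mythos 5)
Your proof is correct, and the underlying mechanism is the same as the paper's: use a lower-triangular element of $\b$, whose action is $\tilde b\cdot h=(b^*)^{-1}(b^*x+h)b^{-1}$, to match imaginary parts by congruence and then absorb the real part into the off-diagonal entry. The packaging differs slightly: the paper transports an arbitrary $h$ directly to an arbitrary $h'$, quoting the transitivity of the action of $G$ on $G^+$ as a known fact to produce the congruence $g$, whereas you reduce to the base point $i$ by exhibiting the explicit element with $b=Im(h)^{-1/2}$, $x=Re(h)\,Im(h)^{-1/2}$, and then compose using the left-action property and the fact that $\b$ is a group. Your version is more self-contained (the square root construction is exactly the proof of the transitivity you would otherwise cite), at the cost of leaning on the unproved Remark that the action is a genuine left action; the paper's version avoids that dependence by producing the transporting element in one step. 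Incidentally, the explicit matrix you construct is precisely the one the paper itself writes down later (Section 9, Step 2) to move $i$ to $h=x+iy$, so the two arguments are fully consistent.
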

\begin{proof}
Let $h\in\h$ and $\tilde{b}\in\b$,
$$
\tilde{b}=\left( \begin{array}{cc} b & 0 \\ x & (b^*)^{-1} \end{array} \right) ,
$$
with $b^*x$ selfadjoint. Then: 
$$
\tilde{b}\cdot h= xb^{-1}+(b^*)^{-1}hb^{-1}=(b^*)^{-1}(b^*x+h)b^{-1}.
$$ 
The map $x\mapsto (b^*)^{-1}xb^{-1}$ is a linear isomorphism in $\a$ which preserves positivity (and, thus, selfadjointness). Thus, 
$$
Im(\tilde{b}\cdot h)=(b^*)^{-1} Im(h) b^{-1}.
$$
 Fix $h\in\h$  and let $h'$ be another element in $\h$. Put  $z=Im(h)$ and $z'=Im(h')$; both are in $G^+$. The action of $G$ on $G^+$ is transitive, thus there exists $g\in G$ such that $$
z'=(g^*)^{-1}zg^{-1}.
$$
Put  $y=h'g-(g^*)^{-1}z$. Note that $g^*y=g^*h'g-z$ is selfadjoint:
$$
Im(g^*y)=g^* Im(h') g-z=g^* z' g-z=0.
$$
A direct computation shows that if 
$$
\tilde{g}=\left( \begin{array}{cc} g & 0 \\ y & (g^*)^{-1} \end{array} \right),
$$
then
$$
\tilde{g}\cdot h=h'.
$$
\end{proof}

\begin{rem}
As remarked, the unitary matrix $U$ maps $\k_D$ onto $\k_H$, and intertwines the groups $\uspb$ and $\uspa$: $\tilde{a}\in\uspa$ if and only if $U^*\tilde{a}U\in\uspb$. We shall see later (Remark \ref{HD}), that the Moebius transformation $\Gamma:\h\to\d$ induced by these transformations, maps $i\in\h$ to $0\in\d$. Let us denote by 
$\mathbb{I}_i^H$ the isotropy group (of the action of $\uspa$) of $i\in\h$:
$$
\mathbb{I}_i^H=\{\tilde{c}\in\uspa: \tilde{c}\cdot i=i\}.
$$
Accordingly, the isotropy group (of the action of $\uspb$) of $0\in\d$ is
$$
\mathbb{I}_0^D=\{\tilde{d}\in\uspb: \tilde{d}\cdot 0=0\}.
$$ 
The above facts imply that $U^*\mathbb{I}_i^HU=\mathbb{I}_0^D$.

On the other hand, note that $\tilde{d}\cdot 0=0$ if and only if $d_{21}=0$, since $\tilde{d}\in\uspb$, this implies that also $d_{12}=0$ and $d_{11},d_{22}\in\u_\a$, i.e.,
$$
\mathbb{I}_0^D=\{\left(\begin{array}{cc} u & 0 \\ 0 & u \end{array} \right) : u \in \u_\a\},
$$
and, therefore, $\mathbb{I}_i^H=U\mathbb{I}_0^DU^*=\mathbb{I}_0^D$.

\end{rem}

\begin{teo}\label{kh}

\noindent
\begin{enumerate}
\item
The subgroup $\b$ acts freely and transitively on $\k_H$. In particular,  $\spa$ acts transitively on $\k_H$.
\item
$\k_H$ is an $C^\infty$ submanifold of $\a^2$. 
\item
The restriction of $\varphi$ to $\k_H$,
$$
\varphi|_{\k_H}:\k_H\to\h
$$
is an $C^\infty$ epimorphism, with a global $C^\infty$ cross section.
\item
The group $\spa$ acts covariantly with respect to $\varphi$ (on the left): if $\tilde{a}\in\spa$ and 
$\left(\begin{array}{c} b_1\\ b_2 \end{array} \right)\in \k_H$,
$$
\varphi(\tilde{a}\left(\begin{array}{c} b_1\\ b_2 \end{array} \right))=\tilde{a}\cdot\varphi(\left(\begin{array}{c} a_1\\ a_2 \end{array} \right)).
$$
\end{enumerate}
\end{teo}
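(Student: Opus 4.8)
The plan is to prove the four items in the order 4, 1, 2, 3, using the base point $e_0=\tfrac{1}{\sqrt2}\left(\begin{array}{c}1\\ i\end{array}\right)\in\k_H$ (one checks at once that $\theta_H(e_0,e_0)=1$ and $\varphi(e_0)=i$); item 4 is purely formal and is needed first in order to legitimize the inverses that later occur in the action on $\h$. For item 4, expand: for $\tilde a=\left(\begin{array}{cc}a_{11}&a_{12}\\ a_{21}&a_{22}\end{array}\right)$ one has $\tilde a\left(\begin{array}{c}b_1\\ b_2\end{array}\right)=\left(\begin{array}{c}a_{11}b_1+a_{12}b_2\\ a_{21}b_1+a_{22}b_2\end{array}\right)$, which lies in $\k_H$ by the Corollary of the previous section; in particular its first coordinate is invertible, so $\varphi$ of it equals $(a_{21}b_1+a_{22}b_2)(a_{11}b_1+a_{12}b_2)^{-1}$, and factoring $b_1^{-1}$ out on the right gives $(a_{21}+a_{22}b_2b_1^{-1})(a_{11}+a_{12}b_2b_1^{-1})^{-1}$, which is exactly $\tilde a\cdot\varphi\left(\begin{array}{c}b_1\\ b_2\end{array}\right)$ by the definition of the action on $\h$. (The right-hand side of the displayed identity in the statement should read $\varphi\left(\begin{array}{c}b_1\\ b_2\end{array}\right)$.)

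For item 1, given $\left(\begin{array}{c}a_1\\ a_2\end{array}\right)\in\k_H$, the equation $\tilde b\,e_0=\left(\begin{array}{c}a_1\\ a_2\end{array}\right)$ for $\tilde b=\left(\begin{array}{cc}b&0\\ x&(b^*)^{-1}\end{array}\right)\in\b$ forces $b=\sqrt2\,a_1$ and then $x=\sqrt2\,a_2-\tfrac{i}{\sqrt2}(a_1^*)^{-1}$. These are admissible: $b\in G$ because $a_1\in G$, and a short computation gives $b^*x=2a_1^*a_2-i$, whose selfadjointness is precisely the defining relation $\tfrac1i(a_1^*a_2-a_2^*a_1)=1$ of $\k_H$. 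Thus there is exactly one such $\tilde b$; existence is transitivity, and uniqueness gives freeness (if $\tilde b$ fixes $\xi=\tilde c\,e_0$, then $\tilde b\tilde c$ and $\tilde c$ both send $e_0$ to $\xi$, whence $\tilde b=1$). Since $\b\subset\spa$, $\spa$ also acts transitively on $\k_H$.

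For item 2, realize $\k_H$ as a regular level set: on the open set $G\times\a\subset\a^2$ put $F(a_1,a_2)=\tfrac1i(a_1^*a_2-a_2^*a_1)=2\,Im(a_1^*a_2)\in\a_s$, a $C^\infty$ (quadratic) map with $\k_H=F^{-1}(1)$. Then $dF_{(a_1,a_2)}(0,v)=2\,Im(a_1^*v)$; since $a_1$ is invertible and $w\mapsto 2\,Im(w)$ maps $\a$ onto $\a_s$ (with $\tfrac i2 w\mapsto w$), $dF$ is surjective at every point of $\k_H$, and $w\mapsto\left(\begin{array}{c}0\\ \tfrac i2(a_1^*)^{-1}w\end{array}\right)$ is a bounded linear right inverse, so $\ker dF$ is complemented. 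Hence $\k_H$ is a $C^\infty$ complemented submanifold of $\a^2$. As a by-product the orbit map $\b\to\k_H$, $\tilde b\mapsto\tilde b\,e_0$, is a smooth bijection whose differential at $1$ is a Banach-space isomorphism onto $T_{e_0}\k_H=\ker dF_{e_0}$, hence a diffeomorphism, so $\k_H\cong\b$.

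For item 3, the map $s:\h\to\a^2$, $s(h)=\tfrac{1}{\sqrt2}\left(\begin{array}{c}Im(h)^{-1/2}\\ h\,Im(h)^{-1/2}\end{array}\right)$ is $C^\infty$ (composition of the real-linear $h\mapsto Im(h)$ with the $C^\infty$ map $a\mapsto a^{-1/2}$ on $G^+$); one checks $\theta_H(s(h),s(h))=1$ and that the first coordinate is invertible, so $s(h)\in\k_H$, and $\varphi(s(h))=h\,Im(h)^{-1/2}Im(h)^{1/2}=h$. Thus $\varphi|_{\k_H}$ is onto with global $C^\infty$ section $s$. To see it is a submersion, note first that for $(x_1,x_2)\in\k_H$ one computes $Im(\varphi(x_1,x_2))=\tfrac12(x_1x_1^*)^{-1}>0$, so $\varphi$ genuinely maps $\k_H$ into $\h$; differentiating $\varphi|_{\k_H}\circ s=\mathrm{id}$ shows $d(\varphi|_{\k_H})$ is onto along $s(\h)$, and since $\b$ acts transitively on $\k_H$ (item 1) by diffeomorphisms and covariantly over the diffeomorphisms $\tilde b\,\cdot$ of $\h$ (item 4), surjectivity of the differential spreads from $s(\h)$ to all of $\k_H$. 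The computations are elementary; the two places needing attention are the admissibility check $b^*x\in\a_s$ in item 1 — exactly where the hyperboloid equation is used — and the upgrade from ``surjective with smooth section'' to ``submersion'' in item 3, which I would settle by homogeneity rather than a direct tangent-space calculation. The only real bookkeeping point is to invoke the Corollary of the previous section before using any inverse of the form $(a_{11}+a_{12}h)^{-1}$.
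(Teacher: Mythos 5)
Your proof is correct, and items 1, 3 and 4 follow essentially the same route as the paper: solve $\tilde b\, e_0=\left(\begin{array}{c}a_1\\ a_2\end{array}\right)$ explicitly in the Borel subgroup $\b$ (reducing admissibility of $b^*x$ to the hyperboloid equation), exhibit $\psi(h)=\frac{1}{\sqrt2}\left(\begin{array}{c}1\\ h\end{array}\right)Im(h)^{-1/2}$ as a global section, and prove equivariance by factoring $b_1^{-1}$ out on the right. You are in fact more careful than the paper on one point: the paper takes $(1,i)$ as base point of $\k_H$, which satisfies $\theta_H=2$ rather than $1$, and its selfadjointness computation $b^*x=x_1^*x_2-i$ is calibrated to that wrong normalization; your $e_0=\frac{1}{\sqrt2}\left(\begin{array}{c}1\\ i\end{array}\right)$ and $b^*x=2a_1^*a_2-i$ are the consistent versions. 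You are also right that the displayed identity in item 4 has a typo ($a$'s for $b$'s on the right-hand side).

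The genuine divergence is item 2. The paper does not use a level-set argument: it turns the free transitive $\b$-action of item 1 into a global chart $\sigma:\b\to\k_H$, $\sigma(\tilde b)=\tilde b\cdot e_0$, writes its inverse explicitly, observes that $\sigma$ and $\sigma^{-1}$ extend to open sets of $M_2(\a)$ and $\a^2$ respectively, and concludes that $\k_H$ is a submanifold modelled on the (already established) submanifold $\b$. You instead present $\k_H$ as the preimage of $1$ under $F(a_1,a_2)=\frac1i(a_1^*a_2-a_2^*a_1)$ on the open set $G\times\a$, check that $dF$ is surjective with the bounded right inverse $w\mapsto\left(\begin{array}{c}0\\ \frac{i}{2}(a_1^*)^{-1}w\end{array}\right)$, and invoke the regular value theorem. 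Both are sound in the Banach setting; the paper's route gives the diffeomorphism $\k_H\cong\b$ for free (which you recover only as a by-product, via an isomorphism of differentials you assert rather than verify), while yours is more self-contained and does not depend on first knowing that $\b$ is a complemented submanifold of $M_2(\a)$. Your extra homogeneity argument upgrading item 3 to a genuine submersion is more than the paper itself proves there, and it is correct as sketched.
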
 
\begin{proof}
The first assertion: consider the element $(1,i)\in\k_H$, and pick $\tilde{b}\in\b$,
$$
\tilde{b}=\left( \begin{array}{cc} b & 0 \\ x & (b^*)^{-1} \end{array} \right)
$$ 
with $b^*x$ selfadjoint. Then
$$
\tilde{b}\cdot (1,i)=(b, x+i(b^*)^{-1}).
$$
These pairs parametrize $\k_H$. Pick $\left(\begin{array}{c} x_1\\ x_2 \end{array} \right)=(x_1,x_2)\in\k_H$. Then $b=x_1$ and $x=x_2-i(x_1^*)^{-1}$ determine a matrix $\tilde{b}$ in $\b$: $x_1$ is invertible and a straightforward computation shows that the fact that $\left(\begin{array}{c} x_1\\ x_2 \end{array} \right)\in\k_H$ implies that $b^*x$ is selfadjoint. Clearly $\tilde{b}\cdot (1,i)=\left(\begin{array}{c} x_1\\ x_2 \end{array} \right)$, and $\tilde{b}$ is determined by this condition.

The second assertion: 
 the action of $\b$ provides a homeomorphism
 $$
 \sigma: \b\to \k_H , \sigma(\tilde{b})=\tilde{b}\cdot (1,i),
 $$
 with inverse
 $$
 \sigma^{-1} :\k_H\to \b, \sigma^{-1}(\left(\begin{array}{c} x_1\\ x_2 \end{array} \right))=\left( \begin{array}{cc} x_1 & 0 \\ x_2-i(x_1^*)^{-1} & (x_1^*)^{-1} \end{array} \right) .
$$
The map $\sigma^{-1}$ extends to $\tilde{\h}:=\{\left(\begin{array}{c} a_1\\ a_2 \end{array} \right)\in \a^2: Im(a_1^*a_2) , a_1\in G\}$, which is open in $\a^2$, and the map $\sigma$ extends to $M_2(\a)$. Therefore $\sigma$ provides a global  $C^\infty$ adapted chart for $\k_H$ (modelled in the manifold $\b$). 

The third assertion:
consider the map 
$$
\psi:\h\to \k_H \  , \ \ \psi(h)=\left( \begin{array}{c} \frac{1}{\sqrt{2}} Im(h)^{-1/2}\\  \frac{1}{\sqrt{2}} h\ Im(h)^{-1/2} \end{array}\right)=\frac{1}{\sqrt{2}}\left( \begin{array}{c} 1\\ h \end{array}\right)Im(h)^{-1/2}.
$$
By this description, it is apparent that $\psi$ takes values in $\tilde{\h}$, and moreover, after elementary computations (involving right multiplication by elements of of $G_\a$),
$$
i \omega(\psi(h))=Im( ( Im(h)^{-1/2})^* h\ Im(h)^{-1/2})=Im(h)^{-1/2}Im(h)Im(h)^{-1/2}=1.
$$
Finally, we get
$$
\varphi(\psi(h))=\varphi(\frac{1}{\sqrt{2}}Im(h)^{-1/2}, \frac{1}{\sqrt{2}}h Im(h)^{-1/2})=h.
$$ 

The fourth assertion:
$$
\tilde{a}\cdot\varphi(\left(\begin{array}{c} b_1\\ b_2 \end{array} \right))=\varphi(\tilde{a} \left(\begin{array}{c} 1 \\ b_2b_1^{-1} \end{array}\right)),
$$
using the invariance of $\varphi$ under the right action of $G_\a$, this equals
$$  
\varphi(\tilde{a}\left(\begin{array}{c} b_1 \\ b_2 \end{array}\right)b_1^{-1}))=\varphi(\tilde{a}\left(\begin{array}{c} b_1\\ b_2 \end{array} \right)).
$$
\end{proof}
\begin{coro}
$\k_D$ is a C$^\infty$-submanifold of $\a^2$
\end{coro}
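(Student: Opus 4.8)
The plan is to transport the statement for $\k_H$, already obtained in Theorem \ref{kh}, across the unitary $U$ of (\ref{U}). First I would observe that left multiplication $L_U:\a^2\to\a^2$, $v\mapsto Uv$, is a bounded linear bijection with bounded inverse $L_{U^*}=L_{U^{-1}}$, hence a $C^\infty$ diffeomorphism of the Banach space $\a^2$ onto itself. By Lemma \ref{Uactua}, $\left(\begin{array}{c} x_1\\ x_2 \end{array} \right)\in\k_D$ precisely when $U\left(\begin{array}{c} x_1\\ x_2 \end{array} \right)\in\k_H$, so $\k_D=L_U^{-1}(\k_H)=L_{U^*}(\k_H)$.

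Since a $C^\infty$ diffeomorphism of a Banach space carries complemented $C^\infty$ submanifolds to complemented $C^\infty$ submanifolds, and $\k_H$ is such a submanifold of $\a^2$ by Theorem \ref{kh}, it follows that $\k_D$ is a $C^\infty$ submanifold of $\a^2$. Concretely, composing the global adapted chart $\sigma$ of Theorem \ref{kh} with $L_{U^*}$ produces a global adapted chart for $\k_D$, modelled on the same Banach manifold $\b$, and composing $\sigma^{-1}$ with $L_U$ gives its inverse; both extend smoothly to the appropriate open subsets of $\a^2$ and of $M_2(\a)$ exactly as in Theorem \ref{kh}, because $L_U$ maps the open set $\tilde{\h}$ onto an open set.

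I expect no real obstacle here: the only thing to verify is that $L_U$ is indeed a diffeomorphism of $\a^2$ and that Lemma \ref{Uactua} identifies $\k_D$ with $L_U^{-1}(\k_H)$ as sets, both of which are immediate. If one preferred an intrinsic argument avoiding $U$, one could instead repeat the proof of Theorem \ref{kh} in the disk picture, showing that the conjugate subgroup $U^*\b U\subset\spb$ acts freely and transitively on $\k_D$ and using that action as a chart; all the computations there are intertwined by $U$, so this is routine. The argument via $L_U$ is the shortest and is the one I would write.
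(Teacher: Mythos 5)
Your argument is exactly the paper's: the paper also notes that $\k_H$ is a $C^\infty$-submanifold of the open set $\tilde{\h}\subset\a^2$ by Theorem \ref{kh} and concludes that $\k_D=U^*\k_H$ is a submanifold of $\a^2$, since multiplication by the unitary $U$ is a linear diffeomorphism. Your write-up just makes the transport of charts explicit; there is no substantive difference.
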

\begin{proof}
In Theorem \ref{kh} it is shown that $\k_H$ is a C$^\infty$-submanifold of $\tilde{\h}$, which is an open subset of $\a^2$. Thus, $\k_D=U^*\k_H$ is also a submanifold of $\a^2$.
\end{proof}

Let us finish this section, by proving the claim made at the beginning of it, that the  action of $G$ in $G^+$ is the restriction of the action of $\uspa$ on $TG^+$ (using the model $\h$).
\begin{rem}
The injective group homomorphism $G\hookrightarrow \uspa$, described at the beginning of this section,  allows one to regard $g\in G$ as an element in $\uspa$, namely $\left( \begin{array}{cc} g & 0 \\ 0 & (g^*)^{-1} \end{array} \right)$. An element $a\in G^+$ lies in $\h$ as $0+ia$. Then 
$$
\left( \begin{array}{cc} g & 0 \\ 0 & (g^*)^{-1} \end{array} \right)\cdot (0+ia)=(g^*)^{-1}iag^{-1}=ig\cdot a,    
$$ 
which is the guise under which $g\cdot a\in G^+$ appears in $TG^+$.
\end{rem}
\section{The space $\d$ as decompositions of an indefinite form.}
The fact that $\rho_D$ is selfadjoint in $M_2(\a)$  and satisfies $\rho_D^2=1$ implies that the form $\theta_D$ induces a non degenerate $\a$-valued indefinite quadratic form in $\a^2$.
We shall consider the following set, which was studied in \cite{cpr} (Sections 3,4,6):
$$
\q_{\rho_D}=\{\epsilon\in M_2(\a): \epsilon^2=1  \hbox{ and } \rho_D\epsilon\in G^+\}.
$$ 
In particular, $\rho_D\epsilon$ is selfadjoint, which implies that
$$
\theta_D(\epsilon\left(\begin{array}{c} a_1\\ a_2 \end{array} \right),\left(\begin{array}{c} b_1\\ b_2 \end{array} \right))=\theta_D(\left(\begin{array}{c} a_1\\ a_2 \end{array} \right),\epsilon\left(\begin{array}{c} b_1\\ b_2 \end{array} \right)) \hbox{ for all } \left(\begin{array}{c} a_1\\ a_2 \end{array} \right), \left(\begin{array}{c} b_1\\ b_2 \end{array} \right)\in\a^2,
$$
i.e., $\epsilon$ is symmetric for the form $\theta_D$. 
The fact that $\epsilon^2=1$ implies that $\a^2$ is decomposed in two eigenspaces:
$$
\a^2_{+}=\{\left(\begin{array}{c} x_1\\ x_2 \end{array} \right)\in\a^2: \epsilon \left(\begin{array}{c} x_1\\ x_2 \end{array} \right)=\left(\begin{array}{c} x_1\\ x_2 \end{array} \right)\} \ , \  \ \a^2_{-}=\{\left(\begin{array}{c} y_1\\ y_2 \end{array} \right)\in\a^2: \epsilon \left(\begin{array}{c} y_1\\ y_2 \end{array} \right)=-\left(\begin{array}{c} y_1\\ y_2 \end{array} \right)\}.
$$
The fact that  $\rho_D\epsilon\in G^+$ means that the quadratic form induced by $\theta_D$ is positive definite in $\a^2_+$ and negative definite in $\a^2_-$, and that the eigenspaces are $\theta_D$-orthogonal. Conversely, any such decomposition of $\a^2$ induces a non selfadjoint reflection in $\q_{\rho_D}$. Therefore, it is appropriate to think of $\q_{\rho_D}$ as the set of positive-negative decompositions of the quadratic form (given by) $\theta_D$.

The set $\q_{\rho_D}$ is a submanifold of $M_2(\a)$. It has yet another important characterization in Section 3  of  \cite{cpr}: in the polar decomposition of $\epsilon$, the unitary part is precisely $\rho_D$:
$$
\epsilon=|\epsilon^*|\rho_D.
$$
Also, any nonselfadjoint reflection with this latter property belongs to $\q_{\rho_D}$.
Therefore, $\q_{\rho_D}$ is parametrized by a subset of $Gl_2(\a)^+$, the set  positive invertible elements in $M_2(\a)$. In particular, this endows $\q_{\rho_D}$ with an $C^\infty$ submanifold structure, with a rich metric  geometry of non-positive type.

The group $\u(\theta_D)$ acts transitively in $\q_{\rho_D}$,
$$
\tilde{g}\cdot \epsilon=g\epsilon g^{-1}.
$$

We shall see below that that $\q_{\rho_D}$ is naturally diffeomorphic to $\d$. In order to lighten the notation, when the elements $\left( \begin{array}{l} x_1 \\ x_2 \end{array} \right), \left( \begin{array}{l} y_1 \\ y_2 \end{array} \right)$, etc., appear as subindices,  let us denote them  by ${\bf x}, {\bf y}$, etc.

\begin{defi}
Any element $\left(\begin{array}{c} x_1\\ x_2 \end{array} \right)\in\k_D$ defines a (modular) rank one (non selfadjoint) projection in $M_2(\a)$: 
$$
p_{\bf x}=\left(\begin{array}{c} x_1\\ x_2 \end{array} \right) \left(\begin{array}{c} x_1\\ x_2 \end{array} \right)^*\rho_D=\left( \begin{array}{c} x_1 \\ x_2 \end{array} \right) \left( \begin{array}{cc} x_1^* & x_2^* \end{array} \right) \left( \begin{array}{cc}1 & 0 \\ 0  & -1 \end{array} \right)= \left( \begin{array}{cc} x_1x_1^* &  -x_1x_2^* \\ x_2x_1^* &  -x_2x_2^* \end{array} \right).
$$
Or, equivalently, 
$$
p_{\bf x}(\left(\begin{array}{c} a_1\\ a_2 \end{array} \right))=\left(\begin{array}{c} x_1\\ x_2 \end{array} \right)\  \theta_D(\left(\begin{array}{c} a_1\\ a_2 \end{array} \right),\left(\begin{array}{c} x_1\\ x_2 \end{array} \right))=\left(\begin{array}{c} x_1\\ x_2 \end{array} \right)<\rho_D\left(\begin{array}{c} a_1\\ a_2 \end{array} \right) , \left(\begin{array}{c} x_1\\ x_2 \end{array} \right)>.
$$
 With this description, since $\theta_D(\left(\begin{array}{c} x_1\\ x_2 \end{array} \right),\left(\begin{array}{c} x_1\\ x_2 \end{array} \right))=1$, it it clear that $p_{\bf x}$ is a projection, which is  $\theta_D$-symmetric.

Consider the following maps:
\begin{equation}\label{Phi_D}
\Phi_D: \d\to \q_{\rho_D} \  , \ \Phi_D(h)=2p_{\bf x}-1,
\end{equation}
where $\left(\begin{array}{c} x_1\\ x_2 \end{array} \right)\in\k_D$  satisfies that $\varphi(\left(\begin{array}{c} x_1\\ x_2 \end{array} \right))=h$,  
and
\begin{equation}\label{phitilde}
\tilde{\varphi}:\k_D\to\q_{\rho_D}\  , \ \ \tilde{\varphi}(\left(\begin{array}{c} x_1\\ x_2 \end{array} \right))=2p_{\bf x}-1.
\end{equation}
\end{defi}
\begin{lem}
The map $\Phi_D$ is well defined, $C^\infty$ and equivariant with respect to the actions of $\u(\theta_D)$. 
\end{lem}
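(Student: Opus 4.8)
The plan is to verify three things in turn: that $\Phi_D$ is well defined (independent of the choice of lift in $\k_D$), that it is $C^\infty$, and that it intertwines the two actions of $\u(\theta_D)$.

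\textbf{Well-definedness.} Given $h\in\d$, suppose $\left(\begin{array}{c} x_1\\ x_2 \end{array}\right)$ and $\left(\begin{array}{c} x_1'\\ x_2' \end{array}\right)$ both lie in $\k_D$ with $\varphi$-image $h$, i.e. $x_2x_1^{-1}=h=x_2'(x_1')^{-1}$. Setting $g=(x_1')^{-1}x_1\in G$ (both first coordinates are invertible), one gets $x_1'=x_1g^{-1}$ and $x_2'=hx_1'=hx_1g^{-1}=x_2g^{-1}$, so the two lifts differ by right multiplication by $g^{-1}$. The condition $\theta_D(\left(\begin{array}{c} x_1'\\ x_2'\end{array}\right),\left(\begin{array}{c} x_1'\\ x_2'\end{array}\right))=1$ then forces $(g^{-1})^*\cdot 1\cdot g^{-1}=1$, i.e. $g$ is unitary. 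Finally, in the formula $p_{\bf x}=\left(\begin{array}{c} x_1\\ x_2\end{array}\right)\left(\begin{array}{c} x_1\\ x_2\end{array}\right)^*\rho_D$, replacing the column vector by its right translate by the unitary $g^{-1}$ leaves $p_{\bf x}$ unchanged: $\left(\begin{array}{c} x_1\\ x_2\end{array}\right)g^{-1}(g^{-1})^*\left(\begin{array}{c} x_1\\ x_2\end{array}\right)^*\rho_D=p_{\bf x}$. Hence $2p_{\bf x}-1$ depends only on $h$. One should also note $\Phi_D(h)\in\q_{\rho_D}$: $p_{\bf x}$ is $\theta_D$-symmetric and a projection (already observed in the Definition), so $\epsilon=2p_{\bf x}-1$ satisfies $\epsilon^2=1$ and is $\theta_D$-symmetric, i.e. $\rho_D\epsilon$ selfadjoint; positivity $\rho_D\epsilon\in G^+$ follows because the range of $p_{\bf x}$ is the span of a vector with $\theta_D$-value $1>0$ and its complement is negative definite, which is exactly what $\|x_2x_1^{-1}\|<1$ encodes.

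\textbf{Smoothness.} Use the explicit global smooth section produced earlier. By Theorem \ref{kh}(3) (and its disk analogue via $U$), the map $\psi_D:\d\to\k_D$, $\psi_D(z)=\left(\begin{array}{c}(1-z^*z)^{-1/2}\\ z(1-z^*z)^{-1/2}\end{array}\right)$, is a $C^\infty$ cross section of $\varphi_D$. Then $\Phi_D=\tilde\varphi\circ\psi_D$, and $\tilde\varphi(\left(\begin{array}{c}x_1\\x_2\end{array}\right))=2\left(\begin{array}{c}x_1\\x_2\end{array}\right)\left(\begin{array}{c}x_1\\x_2\end{array}\right)^*\rho_D-1$ is manifestly polynomial in the entries, hence $C^\infty$. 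Since $z\mapsto(1-z^*z)^{-1/2}$ is $C^\infty$ on $\d$ by the smooth functional calculus, $\Phi_D$ is a composition of $C^\infty$ maps.

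\textbf{Equivariance.} For $\tilde g\in\u(\theta_D)$ the actions are $\tilde g\cdot z=\varphi_D(\tilde g\,\psi_D(z))$ on $\d$ and $\tilde g\cdot\epsilon=\tilde g\epsilon\tilde g^{-1}$ on $\q_{\rho_D}$. One computes, for any $\left(\begin{array}{c}x_1\\x_2\end{array}\right)\in\k_D$, that $p_{\tilde g{\bf x}}=\tilde g\,p_{\bf x}\,\tilde g^{-1}$: indeed
$$
p_{\tilde g{\bf x}}=\tilde g\left(\begin{array}{c}x_1\\x_2\end{array}\right)\left(\begin{array}{c}x_1\\x_2\end{array}\right)^*\tilde g^*\rho_D
=\tilde g\left(\begin{array}{c}x_1\\x_2\end{array}\right)\left(\begin{array}{c}x_1\\x_2\end{array}\right)^*\rho_D\rho_D\tilde g^*\rho_D
=\tilde g\,p_{\bf x}\,\tilde g^{-1},
$$
using $\rho_D\tilde g^*\rho_D=\tilde g^{-1}$ from the definition of $\u(\theta_D)$. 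Combining with the fact (Theorem \ref{kh}(4), disk version) that $\varphi_D$ intertwines left multiplication on $\k_D$ with the action on $\d$, and that $\tilde g\,\psi_D(z)$ is a lift of $\tilde g\cdot z$, we get $\Phi_D(\tilde g\cdot z)=2p_{\widetilde{\tilde g\cdot z}}-1=2\tilde g\,p_{\psi_D(z)}\,\tilde g^{-1}-1=\tilde g(2p_{\psi_D(z)}-1)\tilde g^{-1}=\tilde g\cdot\Phi_D(z)$.

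The only genuinely delicate point is the well-definedness argument, specifically identifying that two lifts of the same $h$ differ by a \emph{unitary} right factor rather than merely an invertible one; everything else is either the smooth functional calculus or the identity $\rho_D\tilde g^*\rho_D=\tilde g^{-1}$. I expect that step to be where care is needed, though it is short.
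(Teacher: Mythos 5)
Your proof is correct in its overall structure and, for the well-definedness step (the unitarity of the transition element $g$), the smoothness via the explicit global section, and the equivariance computation $p_{\tilde g{\bf x}}=\tilde g\, p_{\bf x}\,\tilde g^{-1}$, it coincides with the paper's argument essentially line by line. The one place where you genuinely diverge --- and, contrary to your closing remark, the one place where the real work lies --- is the verification that $\rho_D(2p_{\bf x}-1)\in G^+$. The paper proves this by direct computation: it writes out $(2p_{\bf x}-1)\rho_D$, sets $\gamma=x_1x_2^*$ and lets $m$ be the selfadjoint off-diagonal matrix with entries $\gamma,\gamma^*$, and identifies the product as $(1+m^2)^{1/2}+m$, which is positive and invertible because $f(t)=(1+t^2)^{1/2}+t>0$ on $\mathbb{R}$. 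You instead appeal to the characterization of $\q_{\rho_D}$ as positive/negative decompositions of $\theta_D$ and assert that $\theta_D$ is negative definite on $\ker p_{\bf x}$ ``because $\|x_2x_1^{-1}\|<1$''. That is true, but as written it is an assertion, not a proof: one must observe that ${\bf a}\in\ker p_{\bf x}$ means $x_1^*a_1=x_2^*a_2$, hence $a_1=(x_1^*)^{-1}x_2^*a_2$ and
$\theta_D({\bf a},{\bf a})=a_2^*\bigl((x_2x_1^{-1})(x_2x_1^{-1})^*-1\bigr)a_2\le\bigl(\|x_2x_1^{-1}\|^2-1\bigr)a_2^*a_2$,
which is uniformly negative; combined with positivity on the range, $\theta_D$-orthogonality of range and kernel, and the automatic invertibility of $\rho_D(2p_{\bf x}-1)$ (since $2p_{\bf x}-1$ is a reflection), this yields $\rho_D(2p_{\bf x}-1)\in G^+$. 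With that short computation inserted your route is complete, and arguably cleaner than the paper's matrix manipulation; without it, the positivity claim is the under-justified step --- not the unitarity of $g$, which both you and the paper dispatch with the same one-line calculation $1=(g^{-1})^*(x_1^*x_1-x_2^*x_2)g^{-1}=(g^{-1})^*g^{-1}$.
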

\begin{proof}
Suppose $\left(\begin{array}{c} x_1\\ x_2 \end{array} \right), \left(\begin{array}{c} y_1\\ y_2 \end{array} \right)\in\k_D$ such that $x_2x_1^{-1}=y_2y_1^{-1}$, i.e., $\left(\begin{array}{c} y_1\\ y_2 \end{array} \right)=\left(\begin{array}{c} x_1\\ x_2 \end{array} \right)\cdot g$, for $g=x_1^{-1}y_1\in G$. Note that 
$$
1=y_1^*y_1-y_2^*y_2=(x_1g)^*x_1g-(x_2g)^*x_2g=g^*(x_1^*x_1-x_2^*x_2)g= g^*g,
$$
i.e., $g\in\u_\a$. Then
$$
p_{\bf y}=\left(\begin{array}{c} y_1\\ y_2 \end{array} \right)\left(\begin{array}{c} y_1\\ y_2 \end{array} \right)^*\rho_D=\left(\begin{array}{c} x_1\\ x_2 \end{array} \right)gg^*\left(\begin{array}{c} x_1\\ x_2 \end{array} \right)^*\rho_D=\left(\begin{array}{c} x_1\\ x_2 \end{array} \right)\left(\begin{array}{c} x_1\\ x_2 \end{array} \right)^*\rho_D
$$
$$
=p_{\bf x}.
$$
Let us prove that the reflection $2p_{\bf x}-1$ belongs to $\q_{\rho_D}$. It is  symmetric for the form $\theta_D$. Clearly, $\rho_D(2p_{\bf x}-1)$ is invertible, and it is non negative if and only if 
$$
(2p_{\bf x}-1))\rho_D=\rho_D(\rho_D(2p_{\bf x}-1))\rho_D\ge 0.
$$
Explicitly,
$$
(2 p_{\bf x}-1)\rho_D=\left( \begin{array}{cc} x_1x_1^*-1  & x_1x_2^* \\  x_2x_1^*  & x_2x_2^*+1 \end{array} \right).
$$
Put $\gamma=x_1x_2^*$. Since $\left(\begin{array}{c} x_1\\ x_2 \end{array} \right)\in\k_D$, one has that $x_1^*x_1=1+x_2^*x_2$. Then  
$$
\gamma^*\gamma= x_2x_1^*x_1x_2^*=x_2(1+x_2^*x_2)x_2^*=x_2x_2^*+(x_2x_2^*)^2,
$$
thus $\gamma^*\gamma+1=(x_2x_2^*+1)^2$, i.e., 
$x_2^*x_2+1=(\gamma^*\gamma+1)^{1/2}$. Similarly, we get $x_1x_1^*-1=(\gamma\gamma^*+1)^{1/2}$ (using now that $x_1x_1>1$, because $x_1^*x_1>1$ and $x_1$ is invertible) .
Then
$$
(2 p_{\bf x}-1)\rho_D=\left( \begin{array}{cc} (\gamma\gamma^*+1)^{1/2} & \gamma \\ \gamma^* & (\gamma^*\gamma+1)^{1/2} \end{array} \right).
$$
In order to prove that this matrix is invertible, denote 
$$
m=\left(\begin{array}{cc} 0 & \gamma \\ \gamma^* & 0 \end{array} \right).
$$
Clearly $m$ is selfadjoint and 
$$
(2 p_{\bf x}-1)\rho_D=(1+m^2)^{1/2}+m\ge 0,
$$
because the real function $f(t)=(1+t^2)^{1/2}+t\ge 0$ for all $t\in\mathbb{R}$.

The fact that $\Phi_D$ is $C^\infty$ follows from a standard argument in fibrations: clearly, the formula that defines $\Phi_D$ in terms of coordinates in $\k_D$ is $C^\infty$, therefore, using  $C^\infty$ local cross sections for the fibration $\hat{\varphi}:\k_D\to \d$, one obtains that $\Phi_D$ is $C^\infty$.

Finally, if $\tilde{g}\in\u(\theta_D)$ (i.e., $\tilde{g}^*\rho_D=\rho_D\tilde{g}^{-1}$),
$$
p_{\tilde{g}\cdot {\bf x}}=\tilde{g}\left(\begin{array}{c} x_1\\ x_2 \end{array} \right)<\rho_D\ \cdot \ , \tilde{g}\left(\begin{array}{c} x_1\\ x_2 \end{array} \right)>=\tilde{g}\left(\begin{array}{c} x_1\\ x_2 \end{array} \right)<\tilde{g}^*\rho_D\ \cdot \ , \left(\begin{array}{c} x_1\\ x_2 \end{array} \right)>
$$
$$
=\tilde{g}\left(\begin{array}{c} x_1\\ x_2 \end{array} \right)<\rho_D\tilde{g}^{-1}\ \cdot \ , \left(\begin{array}{c} x_1\\ x_2 \end{array} \right)>
=\tilde{g} p_{\bf x} \tilde{g}^{-1},
$$
which means that $\Phi_D$ is $\u(\theta_D)$-equivariant: $\Phi_D(\tilde{g}\left(\begin{array}{c} x_1\\ x_2 \end{array} \right))=\tilde{g}\Phi_D(\left(\begin{array}{c} x_1\\ x_2 \end{array} \right))\tilde{g}^{-1}$.
\end{proof}

In the next theorem we summarize several results about the maps considered in the following diagram of homogeneous spaces of the group $\u(\theta_D)$:
\begin{equation}\label{diagrama}
\xymatrix{
& \k_D \ar[ld]_{\hat{\varphi}}\ar[rd]^{\tilde{\varphi}} \\
\d \ar[rr]^{\Phi_D}
&& \q_{\rho_D}
}
\end{equation}

\begin{teo}
 The diagram (\ref{diagrama}) commutes.
The maps $\hat{\vp}:\k_D\to \d$ and $\tilde{\vp}:\k_D\to \q_{\rho_D}$ are $C^\infty$ submersions. The map
$\Phi_D:\d\to\q_{\rho_D}$ is a $C^\infty$-diffeomorphism. All maps are equivariant under the action of $\u(\theta_D)$.
\end{teo}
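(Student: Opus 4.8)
\emph{Plan of proof.} The idea is to reduce the whole statement to the assertion that $\Phi_D$ is a diffeomorphism, which is the only substantial point; the two submersion claims and most of the equivariance then come essentially for free, the latter partly from the lemma just proved and from Theorem~\ref{kh} transported to the disk model through $U$. Commutativity of (\ref{diagrama}) is immediate: to evaluate $\Phi_D(\hat\varphi(\left(\begin{array}{c}x_1\\ x_2\end{array}\right)))=\Phi_D(x_2x_1^{-1})$ by (\ref{Phi_D}) one may use \emph{any} preimage of $x_2x_1^{-1}$ under $\hat\varphi$, and $\left(\begin{array}{c}x_1\\ x_2\end{array}\right)$ is one, so $\Phi_D(\hat\varphi(\left(\begin{array}{c}x_1\\ x_2\end{array}\right)))=2p_{\bf x}-1=\tilde\varphi(\left(\begin{array}{c}x_1\\ x_2\end{array}\right))$; the preceding lemma guarantees independence of the chosen preimage.

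For $\hat\varphi$ I would prove more than the existence of a local cross section: the map
$$
\d\times\u_\a\longrightarrow\k_D,\qquad (z,u)\longmapsto\left(\begin{array}{c}(1-z^*z)^{-1/2}u\\ z(1-z^*z)^{-1/2}u\end{array}\right)
$$
is a $C^\infty$-diffeomorphism, with inverse $\left(\begin{array}{c}x_1\\ x_2\end{array}\right)\mapsto\bigl(x_2x_1^{-1},(x_1x_1^*)^{-1/2}x_1\bigr)$; here one uses the identity $1-(x_2x_1^{-1})^*(x_2x_1^{-1})=(x_1x_1^*)^{-1}$ on $\k_D$ to see the first component lies in $\d$, and the polar decomposition of the invertible $x_1$ for the second. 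Under this diffeomorphism $\hat\varphi$ becomes the first projection, so it is a globally trivial fibre bundle with group $\u_\a$, in particular a $C^\infty$ submersion admitting a global $C^\infty$ section; its $\u(\theta_D)$-equivariance is the disk-model instance of Theorem~\ref{kh}(4), i.e.\ the invariance of $\varphi$ under the right action of $G$.

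The core is that $\Phi_D$ is a diffeomorphism; it is already $C^\infty$ and equivariant by the preceding lemma, so only bijectivity and smoothness of the inverse are at stake. \emph{Injectivity:} $\Phi_D(z)=2p_{\bf x}-1$ determines the range of $p_{\bf x}$, which, since $x_1$ is invertible, is the free rank-one submodule $\{(a,za):a\in\a\}\subset\a^2$; this submodule has exactly one element with first coordinate $1$, namely $(1,z)$, so $z$ is recovered. \emph{Surjectivity:} given $\epsilon\in\q_{\rho_D}$, put $p=\frac12(1+\epsilon)$, the $\theta_D$-symmetric projection onto $\a^2_+$. From the polar-decomposition description $\epsilon=|\epsilon^*|\rho_D$ recalled above, the $(1,1)$-entry of $\epsilon$ equals that of $|\epsilon^*|$, which is $\ge 0$; hence $p_{11}=\frac12(1+\epsilon_{11})\ge\frac12$ is invertible. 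Therefore the first-coordinate map $\a^2_+\to\a$ is bijective: injective because $(0,a_2)\in\a^2_+$ would give $\theta_D((0,a_2),(0,a_2))=-a_2^*a_2\ge 0$, forcing $a_2=0$; surjective because $p_{11}$ is invertible. Thus $\a^2_+=\{(a,za):a\in\a\}$ for a unique $z\in\a$, and $\theta_D((1,z),(1,z))=1-z^*z$ is positive and invertible since $\rho_D\epsilon\in G^+$ bounds $\theta_D$ from below on $\a^2_+$, so $z\in\d$. Finally, $2p_{\bf x}-1$ with $\left(\begin{array}{c}x_1\\ x_2\end{array}\right)=\bigl((1-z^*z)^{-1/2},z(1-z^*z)^{-1/2}\bigr)$ and $\epsilon$ are both in $\q_{\rho_D}$ with the same $(+1)$-eigenspace $\{(a,za):a\in\a\}$, and an element of $\q_{\rho_D}$ is determined by its $(+1)$-eigenspace (its $(-1)$-eigenspace being the $\theta_D$-orthogonal complement, and $\a^2$ their direct sum); hence $\Phi_D(z)=\epsilon$. \emph{Smooth inverse:} this construction yields $\Phi_D^{-1}(\epsilon)=\epsilon_{21}(1+\epsilon_{11})^{-1}$, visibly $C^\infty$ on $\q_{\rho_D}$; inserting $\epsilon=2p_{\bf x}-1$ with the $\left(\begin{array}{c}x_1\\ x_2\end{array}\right)$ above gives $\epsilon_{11}=2(1-z^*z)^{-1}-1$, $\epsilon_{21}=2z(1-z^*z)^{-1}$, whence $\epsilon_{21}(1+\epsilon_{11})^{-1}=z$, confirming the formula.

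With $\Phi_D$ a diffeomorphism, $\tilde\varphi=\Phi_D\circ\hat\varphi$ is a composite of a submersion with a diffeomorphism, hence a $C^\infty$ submersion, and $\tilde\varphi(\tilde g\cdot\left(\begin{array}{c}x_1\\ x_2\end{array}\right))=2p_{\tilde g\cdot{\bf x}}-1=\tilde g(2p_{\bf x}-1)\tilde g^{-1}=\tilde g\,\tilde\varphi(\left(\begin{array}{c}x_1\\ x_2\end{array}\right))\,\tilde g^{-1}$ by the identity $p_{\tilde g\cdot{\bf x}}=\tilde g\,p_{\bf x}\,\tilde g^{-1}$ of the preceding lemma, so $\tilde\varphi$ is equivariant. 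The main obstacle is the surjectivity of $\Phi_D$: one must show every signed decomposition $\epsilon$ of $\theta_D$ is in ``graph form''. Everything there hinges on the invertibility of $p_{11}$, extracted from the polar decomposition $\epsilon=|\epsilon^*|\rho_D$; once that is in hand, the positive-definiteness of $\theta_D$ on $\a^2_+$ both produces the contraction $z$ and forces $\Phi_D(z)=\epsilon$.
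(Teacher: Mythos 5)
Your proof is correct, and its central step is genuinely different from the paper's. The paper proves surjectivity of $\Phi_D$ (equivalently, of $\tilde{\varphi}$) by brute force: it posits $x_1=(1+\epsilon_{11})^{1/2}$, $x_2=-\epsilon_{12}^*(1+\epsilon_{11})^{-1/2}$ and verifies entry by entry that $2p_{\bf x}-1=\epsilon$, using the relations coming from $\epsilon^2=1$ together with functional-calculus identities such as $\epsilon_{12}^*f(\epsilon_{12}\epsilon_{12}^*)=f(\epsilon_{12}^*\epsilon_{12})\epsilon_{12}^*$; the payoff is that this explicit formula is itself the global cross section $\q_{\rho_D}\to\k_D$ which shows $\tilde{\varphi}$ is a submersion. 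You instead argue module-theoretically: the $(+1)$-eigenspace of $\epsilon$ is the range of $p=\frac12(1+\epsilon)$, invertibility of $p_{11}$ (extracted, as in the paper's Section 7 discussion, from $\epsilon\rho_D=|\epsilon^*|\ge 0$) shows it is a graph $\{(a,za)\}$, positive-definiteness of $\theta_D$ on $\a^2_+$ forces $z\in\d$ and also shows that an element of $\q_{\rho_D}$ is determined by its $(+1)$-eigenspace, whence $\Phi_D(z)=\epsilon$. Both routes land on the same inverse formula ($\epsilon_{21}(1+\epsilon_{11})^{-1}=-\epsilon_{12}^*(1+\epsilon_{11})^{-1}$), so smoothness of $\Phi_D^{-1}$ is handled identically. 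Your approach buys a coordinate-free explanation of \emph{why} every signed decomposition is in graph form, plus a stronger structural statement for $\hat{\varphi}$ (a global trivialization $\k_D\simeq\d\times\u_\a$ exhibiting it as a principal $\u_\a$-bundle, rather than just a map with a global section); the paper's computation buys the explicit positive representative $x_1=(1+\epsilon_{11})^{1/2}>0$ in each fibre, which it reuses immediately afterwards to compute $\Phi_D$ in the coordinate $z$. Since you recover $\tilde{\varphi}$'s submersion property as $\Phi_D\circ\hat{\varphi}$ with $\Phi_D$ a diffeomorphism, nothing is lost.
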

\begin{proof}
Clearly,
$$
\Phi_D\circ\hat{\varphi}(\left(\begin{array}{c} x_1\\ x_2 \end{array} \right))=p_{\bf x}=\tilde{\varphi}(\left(\begin{array}{c} x_1\\ x_2 \end{array} \right)),
$$
thus the diagram commutes. Let us prove that every reflection $\epsilon$ in $M_2(\a)$ which is $\theta_D$ symmetric and such that $\rho_D\epsilon$ (equivalently, $\epsilon\rho_D$ is positive) must be of the form $\epsilon=2p_{\bf x}-1$ for $x\in\k_D$. That is,
 given $\epsilon$ of the form
 $$
 \epsilon=\left( \begin{array}{cc} \epsilon_{11} & \epsilon_{12} \\ -\epsilon_{12}^*  & \epsilon_{22} \end{array} \right)
 $$
 with $\epsilon_{ii}^*=\epsilon_{ii}$ such that
 $$
 0\le \epsilon\rho_D=\left( \begin{array}{cc} \epsilon_{11} & -\epsilon_{12} \\ -\epsilon_{12}^*  & -\epsilon_{22} \end{array} \right),
 $$
 there exists $\left(\begin{array}{c} x_1\\ x_2 \end{array} \right)\in\k_D$ such that 
 $$
 \left( \begin{array}{cc} \epsilon_{11} & \epsilon_{12} \\ -\epsilon_{12}^*  & \epsilon_{22} \end{array} \right)=\left( \begin{array}{cc} x_1x_1^*-1 & -x_1x_2^* \\ x_2x_1^*  & -x_2x_2^*-1 \end{array} \right).
 $$
 We shall look for $\left(\begin{array}{c} x_1\\ x_2 \end{array} \right)$ with $x_1>0$. Note that for any $\left(\begin{array}{c} x_1\\ x_2 \end{array} \right)\in\k_D$ there exists $\left(\begin{array}{c} x'_1\\ x'_2 \end{array} \right)$ with $x_2'(x_1')^{-1}=x_2x_1^{-1}$ and $x_1'>0$. Indeed, since $x_1$ is invertible, put $x_1=|x_1^*|u$ the polar decomposition of $x_1^*$, and take $x_1'=|x_1^*|$. It is not hard to see that such $x_1'$ is unique.
 
The fact that $\epsilon^2=1$ means  that
$$
\left\{ \begin{array}{l} \epsilon_{11}^2-\epsilon_{12}\epsilon_{12}^*=1 \\ \epsilon_{11}\epsilon_{12}+\epsilon_{12}\epsilon_{22}=0 \\ -\epsilon_{12}^*\epsilon_{12}+\epsilon_{22}^2=1 \end{array} \right. .
$$

Thus, $x_1=(1+\epsilon_{11})^{1/2}$ and $x_2=-\epsilon_{12}^*(1+\epsilon_{11})^{-1/2}$.
We must check that $x_2x_1^*=-\epsilon_{12}^*$, which is apparent, and that
$-x_2x_2^*-1=\epsilon_{22}$. Indeed, by the above relation on $e_{ij}$,
$$
-x_2x_2^*-1=\epsilon_{12}^*(1+\epsilon_{11})^{-1}\epsilon_{12}-1=\epsilon_{12}^*(1+(\epsilon_{12}\epsilon_{12}^*+1)^{1/2})^{-1}\epsilon_{12}-1.
$$
Since $\epsilon_{12}^*(\epsilon_{12}\epsilon_{12}^*)^n=(\epsilon_{12}^*\epsilon_{12})^n\epsilon_{12}^*$, then for any continuous function $f:[0,+\infty)\to \mathbb{C}$, 
$$
\epsilon_{12}^*f(\epsilon_{12}\epsilon_{12}^*)=f(\epsilon_{12}^*\epsilon_{12}) \epsilon_{12}^*.
$$
Thus, again using the relations on $\epsilon_{ij}$,
$$-x_2x_2^*-1=(1+(\epsilon_{12}^*\epsilon_{12}+1)^{1/2})^{-1}\epsilon_{12}^*\epsilon_{12}-1=(1+\epsilon_{22})^{-1}(\epsilon_{22}^2-1)-1=\epsilon_{22}.
$$ 
Next, we must check that $\left(\begin{array}{c} x_1\\ x_2 \end{array} \right)\in\k_D$. Clearly $x_1=(1+\epsilon_{11})^{1/2}$ is invertible (and positive).
$$
x_1x_1^*-x_2x_2^*=1+\epsilon_{11}-(1+\epsilon_{11})^{-1}\epsilon_{12}\epsilon_{-12}^*(1+\epsilon_{11})^{-1/2}.
$$
Since $-\epsilon_{12}\epsilon_{12}^*=(1+\epsilon_{11})^2$, this equals
$$
1+\epsilon_{11}-(-1+\epsilon_{11})=2.
$$
The formula $\left(\begin{array}{c} x_1\\ x_2 \end{array} \right)=\left( \begin{array}{c} (1+\epsilon_{11})^{1/2} \\ -\epsilon_{12}^*(1+\epsilon_{11})^{-1/2}\end{array} \right)$, regarded as a map $\q_{\rho_D}\to \k_D$, provides a global $C^\infty$ cross section for   $\tilde{\vp}$, proving that it is retraction, thus  a submersion. 

Let us exhibit the inverse of $\Phi_D$: since $\epsilon=2p_{\bf x}-1$ for a unique $\left(\begin{array}{c} x_1\\ x_2 \end{array} \right)\in\k_D$ with $x_1>0$ (as computed above, 
$$
\Phi_D^{-1}:\q_{\rho_D}\to \d \ , \ \ \Phi_D^{-1}(\epsilon)=x_2x_1^{-1}=-\epsilon_{12}^*(1+\epsilon_{11})^{-1}.
$$
Clearly, it is a $C^\infty$ map.
\end{proof}
The map $\Phi_D$ was computed using coordinates in $\k_D$.  It can be also computed in terms of $z\in\d$. Analogously as  $\tilde{\vp}$, $\hat{\vp}$ has also a global cross section given by the unique element in each fiber with positive first coordinate. Namely,
$$
\delta:\d\to\k_D \ , \ \ \delta(z)=\left( \begin{array}{c} 1 \\ z \end{array} \right) (1-z^*z)^{-1/2}.
$$
Then 
$$
\Phi_D(z)=2p_{\delta(z)}-1= 2\left( \begin{array}{c} 1 \\ z \end{array} \right)(1-z^*z)^{-1} \left( \begin{array}{cc} 1 &  z^* \end{array} \right) \rho_D -1
$$
$$
=\left( \begin{array}{cc} 2(1-z^*z)^{-1}- 1  & - 2(1-z^*z)^{-1}z^*\\ 2z(1-z^*z)^{-1} & -2z(1-z^*z)^{-1}z^* -1 \end{array} \right).
$$
\begin{rem}\label{HD}

There is an analogous diagram as (\ref{diagrama}) for the space $\h$:
\begin{equation}\label{diagrama H}
\xymatrix{
& \k_H \ar[ld]_{\varphi}\ar[rd]^{\bar{\varphi}} \\
\h \ar[rr]^{\Phi_H}
&& \q_{\rho_H}
}
\end{equation}

Recall the unitary operator $U$ which intertwines $\rho_D$ and $\rho_H$: $U\rho_DU^*=\rho_H$. We saw (Lemma \ref{Uactua}) that $U$ maps $\k_D$ onto $\k_H$. Also,  it is clear that 
$$
\epsilon\in\q_{\rho_H} \hbox{ if and only if } U^*\epsilon U\in\q_{\rho_D}.
$$
Also it is clear, by construction, that
\begin{equation}\label{equivaHD}
\Phi_H(h)=U \Phi_D(\gamma(h)) U^*.
\end{equation}
\end{rem}
Let us state another consequence obtained from the equivalence of diagrams (\ref{diagrama}) and (\ref{diagrama H}
\begin{teo}
The map $\Gamma:\h\to \d$, 
$$
\Gamma(h)=(1+ih)(1-ih)^{-1}
$$
is a (well defined) diffeomorphism with inverse $\Gamma^{-1}:\d\to\h$
$$
\Gamma^{-1}(z)=i(1-z)(1+z)^{-1}.
$$
\end{teo}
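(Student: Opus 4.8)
The plan is to derive the Cayley transform $\Gamma$ directly from the unitary $U$ that intertwines the two models, exploiting all the equivariance already established. Recall that $U=\frac{1}{\sqrt{2}}\left(\begin{array}{cc} 1 & 1 \\ i & -i\end{array}\right)$ belongs to $\u_2(\a)$ and satisfies $U\rho_D U^*=\rho_H$, so conjugation by $U$ carries $\spb$ onto $\spa$. The Moebius action formula from Section 6 says that a matrix $\tilde a=\left(\begin{array}{cc} a_{11}&a_{12}\\a_{21}&a_{22}\end{array}\right)$ sends $h$ to $(a_{21}+a_{22}h)(a_{11}+a_{12}h)^{-1}$. The point is that the geometric triangle in diagram (\ref{diagrama H}) and in (\ref{diagrama}) are intertwined by $U$, and in particular $\Gamma$ should be nothing but the Moebius transformation induced by $U$ itself, read as a map $\h\to\d$ (the inverse direction, since $U$ conjugates $\rho_D$ \emph{to} $\rho_H$).

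First I would check well-definedness: for $h\in\h$ we have $Im(h)\in G^+$, and I need $1-ih\in G$. Writing $1-ih = 1+Im(h)-i\,Re(h)$ and noting $Im(1-ih)=-Re(h)$ is not obviously invertible, the cleaner route is to observe $-i\notin\sigma(h)$ is equivalent to $1-ih\in G$; but more robustly, $(1-ih)^*(1-ih) = 1 + i(h^*-h) + h^*h = 1 - 2\,Im(h) + h^*h$ — hmm, that sign is wrong, so instead I would use $(1-ih)(1-ih)^* $ or simply argue as in the proof of Lemma \ref{Uactua}: there it is shown that $Im(z_2z_1^{-1})>0$ forces $z_1-iz_2\in G$; applying that argument to the constant pair, from $Im(h)>0$ one gets $1-ih\in G$ (equivalently $h-i\cdot 1$ relates to $(1-ih)$ up to an invertible factor). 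Symmetrically $1+z\in G$ for $z\in\d$ since $\|z\|<1$. So both formulas make sense.

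Next I would verify that $\Gamma$ really is the $U$-Moebius map and lands in $\d$. Reading off $U$: with $a_{11}=a_{12}=\frac{1}{\sqrt2}$, $a_{21}=\frac{i}{\sqrt2}$, $a_{22}=-\frac{i}{\sqrt2}$, the Moebius action gives $(\frac{i}{\sqrt2}-\frac{i}{\sqrt2}h)(\frac{1}{\sqrt2}+\frac{1}{\sqrt2}h)^{-1}$ — that is not quite $\Gamma$, so the correct intertwiner is $U^*$ (or $U$ acting $\d\to\h$); I would simply compute $U^*$'s Moebius action and match it with $\Gamma$, adjusting which of $U,U^*$ implements $\h\to\d$. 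Once the identification with a Moebius transformation coming from an element of $\u_2(\a)$ conjugating $\rho_H$ to $\rho_D$ is made, the fact that $\Gamma$ maps $\h$ bijectively onto $\d$ is immediate from the established transitivity and equivariance: $U(\cdot)U^*$ carries $\q_{\rho_H}$ onto $\q_{\rho_D}$ by Remark \ref{HD}, and $\Phi_H$, $\Phi_D$ are diffeomorphisms, so $\Gamma = \Phi_D^{-1}\circ(U^*(\cdot)U)\circ\Phi_H$ is a composition of diffeomorphisms. The inverse formula is then obtained either by the same reasoning with $U$ in place of $U^*$, or by a direct algebraic check that $i(1-z)(1+z)^{-1}$ and $(1+ih)(1-ih)^{-1}$ are mutually inverse, which is a routine manipulation using that $1$ commutes with everything: $(1+i\cdot i(1-z)(1+z)^{-1})(1-i\cdot i(1-z)(1+z)^{-1})^{-1}$ collapses to $z$ after clearing the common $(1+z)^{-1}$.

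The main obstacle I anticipate is purely bookkeeping: getting the invertibility claims $1-ih\in G$ and $1+z\in G$ cleanly, and tracking whether $U$ or $U^*$ gives the map in the stated direction (a factor of $i$ versus $-i$, or a scalar $\frac{1}{\sqrt2}$ that cancels in the Moebius quotient). Neither is deep — the invertibility is exactly the computation already performed inside the proof of Lemma \ref{Uactua}, and the scalar ambiguity is resolved by evaluating at one point, e.g.\ checking $\Gamma(i)=0$ as promised in Remark \ref{HD}, which pins down the normalization. Everything else follows formally from the equivariant diagram equivalence (\ref{equivaHD}) together with the theorems of Sections 6 and 7.
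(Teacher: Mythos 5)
Your proposal is correct and follows essentially the same route as the paper: the paper realizes $\Gamma$ as the composition of the section $h\mapsto\frac{1}{\sqrt2}\left(\begin{array}{c}1\\ h\end{array}\right)Im(h)^{-1/2}\in\k_H$, left multiplication by $U^*$ (which maps $\k_H$ into $\k_D$ by Lemma \ref{Uactua}, giving the invertibility of $1-ih$), and the fibration $\varphi_D$, which is the same intertwining-by-$U$ argument you run through the equivalent diagram with $\q_{\rho_H}$ and $\q_{\rho_D}$. One small remark: your computation $(1-ih)^*(1-ih)=1+i(h^*-h)+h^*h$ does \emph{not} have the wrong sign — since $i(h^*-h)=2\,Im(h)$ it equals $1+2\,Im(h)+h^*h\ge 1$, so that direct route to invertibility actually works; your fallback to the argument of Lemma \ref{Uactua} is of course also fine.
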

\begin{proof}
One passes from $\h$ to $\d$ with the cross section 
$$
h\mapsto \frac{1}{\sqrt{2}}\left( \begin{array}{c} 1 \\ h \end{array} \right) Im(h)^{-1/2}\in\k_H
$$
composed with left multiplication by $U^*$, followed by $\hat{\vp}$, namely,
$$
h\mapsto \frac{1}{\sqrt{2}}\left( \begin{array}{c} 1 \\ h \end{array} \right) Im(h)^{-1/2}\mapsto \frac{1}{2}\left( \begin{array}{cc} 1 & -i \\ 1 & i  \end{array} \right)\left( \begin{array}{c} 1 \\ h \end{array} \right) Im(h)^{-1/2}\mapsto (1+i h)(1-ih)^{-1}.
$$
Which means that $\Gamma:\h\to\d$ is well defined and smooth. Its inverse is computed analogously:
$$
z\mapsto \left( \begin{array}{l} 1 \\ z \end{array} \right)(1-z^*z)^{-1/2} \mapsto U \left( \begin{array}{l} 1 \\ z \end{array} \right)(1-z^*z)^{-1/2}\stackrel{\hat{\varphi}}{\to} (i-iz)(1+z)^{-1}.
$$
A straightforward computation shows that these maps are each other inverses.
\end{proof}

Let us finish this section by recalling the action of the group $\spa$.
 If  $\left(\begin{array}{c} x_1\\ x_2 \end{array} \right)\in\tilde{\h}$, then, since
$p_{\bf x}(\left(\begin{array}{c} y_1\\ y_2 \end{array} \right))=\left(\begin{array}{c} x_1\\ x_2 \end{array} \right) \theta_H(\left(\begin{array}{c} x_1\\ x_2 \end{array} \right), \left(\begin{array}{c} y_1\\ y_2 \end{array} \right) )$, one has that for any $\tilde{a}\in\spa$,
$$
 p_{\tilde{a}\cdot {\bf x}}=\tilde{a}.\left(\begin{array}{c} x_1\\ x_2 \end{array} \right) \theta_H(\tilde{a}.\left(\begin{array}{c} x_1\\ x_2 \end{array} \right),\cdot )=\tilde{a}.\left(\begin{array}{c} x_1\\ x_2 \end{array} \right) \theta_H(\left(\begin{array}{c} x_1\\ x_2 \end{array} \right),\tilde{a}^{-1} \cdot )=\tilde{a} p_{\bf x} \tilde{a}^{-1}.
$$
Then $2p_{\tilde{a}\cdot {\bf x}}-1=\tilde{a}(2p_{\bf x}-1)\tilde{a}^{-1}$. Therefore:
\begin{prop}\label{equivPhi}
If $h\in\h$ and $\tilde{g}\in\uspa$,
$$
\Phi_H(\tilde{g}\cdot h)=\tilde{g} \Phi_H(h)\tilde{g}^{-1},
$$
i.e., $\Phi_H$ is equivariant for the action of $\spa$. Therefore
\begin{equation}\label{gamma y U}
\Gamma(\tilde{g}\cdot h)=(U^*\tilde{g}U)\cdot\Gamma(h)
\end{equation}
\end{prop}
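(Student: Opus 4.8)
The plan is to derive both statements from the equivariance of $\Phi_D$ (already proved) together with the relation $U\rho_DU^*=\rho_H$. The first claim, $\Phi_H(\tilde g\cdot h)=\tilde g\,\Phi_H(h)\,\tilde g^{-1}$, I would establish exactly along the lines of the Lemma on $\Phi_D$: given $h\in\h$, pick $\left(\begin{array}{c} x_1\\ x_2 \end{array}\right)\in\k_H$ with $\varphi(\left(\begin{array}{c} x_1\\ x_2 \end{array}\right))=h$, and use the identity $p_{\bf x}(\left(\begin{array}{c} y_1\\ y_2 \end{array}\right))=\left(\begin{array}{c} x_1\\ x_2 \end{array}\right)\theta_H(\left(\begin{array}{c} x_1\\ x_2 \end{array}\right),\left(\begin{array}{c} y_1\\ y_2 \end{array}\right))$ displayed just above the statement. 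Since $\tilde g\in\uspa$ preserves $\theta_H$, one has $\theta_H(\tilde g\cdot\,\cdot\,,\tilde g\cdot\,\cdot\,)=\theta_H(\cdot\,,\cdot\,)$, hence $\theta_H(\tilde g\cdot{\bf x},\,\cdot\,)=\theta_H({\bf x},\tilde g^{-1}\,\cdot\,)$, which yields $p_{\tilde a\cdot{\bf x}}=\tilde g\,p_{\bf x}\,\tilde g^{-1}$ and therefore $2p_{\tilde a\cdot{\bf x}}-1=\tilde g(2p_{\bf x}-1)\tilde g^{-1}$. Combined with the commutativity of diagram (\ref{diagrama H}) — i.e. $\Phi_H\circ\varphi=\bar\varphi$ and the fact that $\tilde g\cdot{\bf x}$ is a preimage of $\tilde g\cdot h$ under $\varphi$ by the fourth assertion of Theorem \ref{kh} — this gives $\Phi_H(\tilde g\cdot h)=\tilde g\,\Phi_H(h)\,\tilde g^{-1}$. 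Strictly speaking all this material is already spelled out in the paragraph preceding the Proposition, so the proof need only assemble it.

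For the second identity, I would use the three intertwining facts established in Remark \ref{HD}: $U$ carries $\k_D$ onto $\k_H$, conjugation by $U$ carries $\q_{\rho_D}$ onto $\q_{\rho_H}$, and $\Phi_H(h)=U\,\Phi_D(\Gamma(h))\,U^*$ (equation (\ref{equivaHD})), where $\Gamma:\h\to\d$ is the diffeomorphism of the preceding Theorem and $U^*\spa\, U=\spb$. Apply the first identity to $\tilde g\cdot h$: on one hand $\Phi_H(\tilde g\cdot h)=U\,\Phi_D(\Gamma(\tilde g\cdot h))\,U^*$; on the other hand $\Phi_H(\tilde g\cdot h)=\tilde g\,\Phi_H(h)\,\tilde g^{-1}=\tilde g\,U\,\Phi_D(\Gamma(h))\,U^*\,\tilde g^{-1}=U\,(U^*\tilde g U)\,\Phi_D(\Gamma(h))\,(U^*\tilde g U)^{-1}\,U^*$. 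By $\u(\theta_D)$-equivariance of $\Phi_D$ (the Lemma/Theorem on diagram (\ref{diagrama})) the latter equals $U\,\Phi_D\big((U^*\tilde g U)\cdot\Gamma(h)\big)\,U^*$. Cancelling $U$ and $U^*$ and using that $\Phi_D$ is a diffeomorphism (hence injective) gives $\Gamma(\tilde g\cdot h)=(U^*\tilde g U)\cdot\Gamma(h)$, which is (\ref{gamma y U}).

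The only point needing a little care — the main (mild) obstacle — is making sure that the group element acting on the $\d$ side is exactly $U^*\tilde g U$ and that it indeed lies in $\spb$, and that the action of $\spb$ on $\d$ is compatible with the action of $\spa$ on $\h$ under $\Gamma$ in the precise way the cancellation demands; this is where injectivity of $\Phi_D$ and the covariance of the cross sections/$\varphi$-maps under the right $G_\a$-action (as in Theorem \ref{kh}) do the bookkeeping. Everything else is formal manipulation of the already-established equivariances. I would therefore keep the written proof short: state the $p_{\tilde a\cdot{\bf x}}=\tilde a\,p_{\bf x}\,\tilde a^{-1}$ computation for $\theta_H$, deduce the first display, then quote (\ref{equivaHD}) and the $\u(\theta_D)$-equivariance of $\Phi_D$ together with injectivity of $\Phi_D$ to conclude (\ref{gamma y U}).
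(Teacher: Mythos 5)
Your proposal is correct and follows essentially the same route as the paper: the first identity is exactly the computation $p_{\tilde a\cdot{\bf x}}=\tilde a\,p_{\bf x}\,\tilde a^{-1}$ carried out in the paragraph immediately preceding the Proposition, and the second identity is the intended formal consequence of equation (\ref{equivaHD}) together with the $\u(\theta_D)$-equivariance and injectivity of $\Phi_D$. Your write-up actually supplies more detail for (\ref{gamma y U}) than the paper does, but the argument is the same one.
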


\section{The hyperbolic geometry of $\q_\rho$}
In previous papers \cite{cprIEOT}, \cite{cprIJM}, \cite{cprILLINOIS} the geometry of the set  of positive invertible elements of a $C^*$-algebra was studied. As is the case with the classical example of positive definite complex matrices (see for instance \cite{mostow}), this space, with the appropriate Finsler metric, behaves as a non-positively curved manifold. In \cite{prILLINOIS} it is proven that the set $\q_\rho$ embedds in the space of positive invertible elements, in a way that the main geometric features remain invariant. Let us briefly describe below  these constructions and  results. We shall use these results (and therefore also state them) for the case of the $C^*$-algebra $M_2(\a)$; the space of positive and invertible matrices shall be denoted by $Gl_2(\a)^+$. 
\begin{rem} (see \cite{cprIEOT})

$Gl_2(\a)^+$ is an open subset of $M_2(\a)_s$, so it has a natural differentiable structure. We consider in $Gl_2(\a)^+$ the following left action of $Gl_2(\a)$:
$$
\tilde{g}\cdot \tilde{a}= (\tilde{g}^*)^{-1}\tilde{a}\tilde{g}^{-1} , \ \ \tilde{g}\in Gl_2(\a), \tilde{a}\in Gl_2(\a)^+.
$$
This action is transitive. The isotropy subgrup of an element $\tilde{a}\in Gl_2(\a)^+$ is the group of $\tilde{a}$-unitary operators. Thus, the Banach-Lie isotropy algebra of $\tilde{a}$ is the space of $\tilde{a}$-anti-Hermitian elements of $M_2(\a)$: $\tilde{x}^*\tilde{a}+\tilde{a}\tilde{x}=0$. A natural complement for this space is the space of $\tilde{a}$-Hermitian elements: $\tilde{y}^*\tilde{a}-\tilde{a}\tilde{y}=0$. This decomposition is equivariant under the action of $Gl_2(\a)$, and induces a linear connection in $Gl_2(\a)^+$. The covariant derivative of this connection is given by 
$$
\frac{DY}{dt}=\frac{dY}{dt}-\frac12\{\dot{\gamma}\gamma^{-1}Y+Y\gamma^{-1}\dot{\gamma}\},
$$
where $Y(t)$ is a tangent field along the curve $\gamma(t)$ in $Gl_2(\a)^+$;  due to the trivial local structure of $Gl_2(\a)^+\subset M_2(\a)_s$, this simply means that $Y(t)$ is a curve of selfadjoint elements in $M_2(\a)$.  A geodesic is a curve $\gamma$ such that $\frac{D\dot{\gamma}}{dt}=0$. The geodesic $\gamma$ with $\gamma(0)=\tilde{a}$ and $\dot{\gamma}(0)=X$ is given by
$$
\gamma(t)=e^{\frac{t}2 X \tilde{a}^{-1}}\tilde{a} e^{\frac{t}2 X\tilde{a}^{-1}}.
$$
The exponential map $exp_{\tilde{a}}: M_2(\a)_{s}\to Gl_2(\a)^+$,
$$
exp_{\tilde{a}}(X)=e^{\frac12X\tilde{a}^{-1}}\tilde{a}e^{\frac12X\tilde{a}^{-1}},
$$
 is everywhere a diffeomorphism. Any pair $\tilde{a},\tilde{b}\in Gl_2(\a)^+$ is joined by a unique geodesic, which is
$$
\gamma_{\tilde{a},\tilde{b}}(t)=\tilde{a}^{1/2}(\tilde{a}^{-1/2}\tilde{b}\tilde{a}^{-1/2})^t\tilde{a}^{1/2}.
$$
\end{rem}
The space $Gl_2(\a)^+$ carries a Finsler metric. By this we mean a continuous distribution $Gl_2(\a)^+ \ni \tilde{a} \mapsto \|\ \|_{\tilde{a}}$ of norms defined in the corresponding tangent spaces $T(Gl_2(\a)^+)_{\tilde{a}}=M_2(\a)_s$. Notice that we do not require that this distribution be smooth, as in the finite dimensional setting (see for instance \cite{atkin1}, \cite{atkin2}, \cite{cprIEOT}, \cite{cpr}, for other examples of  Finsler metrics in the context of operator theory). If $\tilde{a}\in Gl_2(\a)^+$ and $X\in M_2(\a)_s$, put
\begin{equation}\label{metrica}
\|X\|_{\tilde{a}}=\|\tilde{a}^{-\frac12}X\tilde{a}^{-\frac12}\|.
\end{equation}
\begin{teo}{\rm  (\cite{cprIEOT}, Section 6)}
With the metric defined in (\ref{metrica}), the geodesics of the connection are globally minimal: for any  $\tilde{a},\tilde{b}\in Gl_2(\a)^+$,  the unique geodesic  $\gamma_{\tilde{a},\tilde{b}}$ of the connection has minimal length among all smooth curves in $Gl_2(\a)^+$ joining $\tilde{a}$ and $\tilde{b}$.
 
The (geodesic) distance between $\tilde{a}$ and $\tilde{b}$ can be computed:
$$
d_g(\tilde{a},\tilde{b})=\|\log(\tilde{a}^{-\frac12}\tilde{b}\tilde{a}^{-\frac12})\|,
$$
where $\log$ denotes the unique selfadjoint logarithm of a positive invertible element.
\end{teo}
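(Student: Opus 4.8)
The plan is to follow the classical Corach--Porta--Recht scheme, whose two ingredients are homogeneity and the exponential metric increasing inequality. First I would record that the action $\tilde g\cdot\tilde x=(\tilde g^*)^{-1}\tilde x\tilde g^{-1}$ of $Gl_2(\a)$ on $Gl_2(\a)^+$ is transitive and acts by isometries of the metric (\ref{metrica}): writing $c=\tilde g^{-1}(\tilde g\cdot\tilde a)^{-1/2}$ one checks $cc^*=\tilde a^{-1}$ and $c^*\tilde a c=1$, so $v:=\tilde a^{1/2}c$ is unitary and $\|(\tilde g^*)^{-1}X\tilde g^{-1}\|_{\tilde g\cdot\tilde a}=\|c^*Xc\|=\|v^*\tilde a^{-1/2}X\tilde a^{-1/2}v\|=\|X\|_{\tilde a}$. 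Taking $\tilde g=\tilde a^{1/2}$ sends $\tilde a\mapsto 1$ and $\tilde b\mapsto \tilde b_0:=\tilde a^{-1/2}\tilde b\tilde a^{-1/2}$, carries $\gamma_{\tilde a,\tilde b}$ to $\gamma_{1,\tilde b_0}$, and preserves lengths of all curves; so it suffices to treat $\tilde a=1$, i.e. to show that every smooth curve from $1$ to a positive invertible $\tilde b_0$ has length at least $\|\log\tilde b_0\|$, and that $\gamma_{1,\tilde b_0}$ attains this value.

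The upper bound is a direct computation. The geodesic is $\gamma_{1,\tilde b_0}(t)=\tilde b_0^{\,t}=\exp_1(t\log\tilde b_0)$, where $\exp_1(X)=e^X$. Since $\log\tilde b_0$ commutes with $\tilde b_0^{\,t}$ we get $\dot\gamma(t)=\tilde b_0^{\,t/2}(\log\tilde b_0)\tilde b_0^{\,t/2}$, hence $\|\dot\gamma(t)\|_{\gamma(t)}=\|\tilde b_0^{-t/2}\dot\gamma(t)\tilde b_0^{-t/2}\|=\|\log\tilde b_0\|$ for every $t$, so $L(\gamma_{1,\tilde b_0})=\|\log\tilde b_0\|$. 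Undoing the reduction, $L(\gamma_{\tilde a,\tilde b})=\|\log(\tilde a^{-1/2}\tilde b\tilde a^{-1/2})\|$, which is the claimed value.

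For the lower bound I would invoke the \emph{exponential metric increasing} (EMI) property of the connection exponential at $1$: for all $X,Z\in M_2(\a)_s$,
$$\|(d\exp_1)_X(Z)\|_{\exp_1(X)}\ \ge\ \|Z\|.$$
Granting this, let $\gamma$ be an arbitrary smooth curve from $1$ to $\tilde b_0$; since $\exp_1$ is a global diffeomorphism (recalled in the excerpt) we may write $\gamma=\exp_1\circ\beta$ with $\beta(t)=\log\gamma(t)$, so $\beta(0)=0$ and $\beta(1)=\log\tilde b_0$. Then $\dot\gamma(t)=(d\exp_1)_{\beta(t)}(\dot\beta(t))$ and
$$L(\gamma)=\int_0^1\|\dot\gamma(t)\|_{\gamma(t)}\,dt\ \ge\ \int_0^1\|\dot\beta(t)\|\,dt\ \ge\ \Big\|\int_0^1\dot\beta(t)\,dt\Big\|=\|\log\tilde b_0\|.$$
Together with the previous paragraph this gives both the minimality of $\gamma_{\tilde a,\tilde b}$ and, since the geodesic distance is the infimum of lengths, the formula $d_g(\tilde a,\tilde b)=\|\log(\tilde a^{-1/2}\tilde b\tilde a^{-1/2})\|$.

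The main obstacle is the EMI inequality itself; everything else (the isometry of the action, smoothness of $\beta$, the integral formula $(d\exp_1)_X(Z)=\int_0^1 e^{sX}Ze^{(1-s)X}\,ds$) is routine. Conjugating by $e^{-X/2}$ reduces EMI to
$$\Big\|\int_{-1/2}^{1/2}e^{uX}Ze^{-uX}\,du\Big\|\ \ge\ \|Z\|.$$
To prove this I would pass to a faithful representation with spectral measure $E$ of $X$: the left-hand integral is then the double operator integral of $Z$ with symbol $g(\lambda-\mu)$, $g(s)=\sinh(s/2)/(s/2)$, which satisfies $g(0)=1$. Its reciprocal symbol $1/g(s)=(s/2)/\sinh(s/2)$ is, after rescaling, the characteristic function of the logistic distribution, hence positive definite and the Fourier transform of a probability measure; consequently the double operator integral with symbol $1/g$ is contractive, and applying it to the left-hand side recovers $Z$ (symbols depending only on $\lambda-\mu$ compose by multiplication, and $g\cdot(1/g)\equiv1$ is the identity operation). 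This yields the displayed inequality. Alternatively, since this theorem is quoted from \cite{cprIEOT}, Section~6, one may simply cite the computation there; the sketch above is the route I would reconstruct if a self-contained argument were wanted.
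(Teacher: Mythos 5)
The paper offers no proof of this theorem: it is stated as a recalled result, with the proof delegated entirely to the citation \cite{cprIEOT}, Section 6. Your reconstruction is therefore doing genuinely more work than the text, and it is correct. The reduction by homogeneity is verified cleanly (your unitary $v=\tilde a^{1/2}c$ does the job), the constant-speed computation for $\gamma_{1,\tilde b_0}(t)=\tilde b_0^{\,t}$ gives the upper bound, and you correctly isolate the only non-routine ingredient, the exponential-metric-increasing inequality $\bigl\|\int_{-1/2}^{1/2}e^{uX}Ze^{-uX}\,du\bigr\|\ge\|Z\|$. Your proof of that inequality via the positive definiteness of $s\mapsto (s/2)/\sinh(s/2)$ is sound, and is the now-standard route (it is essentially the argument in Bhatia's treatment of the exponential metric increasing property); whether it coincides with the original argument of \cite{cprIEOT} is immaterial to its validity. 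One simplification worth making: you do not need a faithful representation, spectral measures, or double operator integrals. Since $\int_{-1/2}^{1/2}e^{uX}Ze^{-uX}\,du=g(\mathrm{ad}\,X)(Z)$ with $g(s)=\sinh(s/2)/(s/2)$, and $1/g$ is the characteristic function of a (rescaled) logistic distribution, say $1/g(s)=\int_{\mathbb{R}}e^{isx}\,d\mu(x)$ with $\mu$ a probability measure, one gets directly $(1/g)(\mathrm{ad}\,X)=\int_{\mathbb{R}}\mathrm{Ad}(e^{ixX})\,d\mu(x)$, an average of isometric automorphisms of $M_2(\a)$, hence a contraction; composing with $g(\mathrm{ad}\,X)$ (legitimate by holomorphic functional calculus for the bounded operator $\mathrm{ad}\,X$, whose spectrum is real, where $g$ does not vanish) recovers $Z$ and yields the inequality entirely inside the C$^*$-algebra.
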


	Moreover, the metric has the following property, which in Riemannian geometry is equivalent to non positive curvature, and is used as a definition of non positive curvature for metric length spaces (i.e., metric spaces with given short curves, see \cite{bridson}, \cite{burago}, \cite{gromov}). 

\begin{teo}{\rm  (\cite{cprILLINOIS})}

If  $\gamma(t), \delta(t)$ are two  geodesics in $G^+$,  then $f(t)=d_g(\gamma(t),\delta(t))$ is a  convex function.

In particular, if the geodesics start at the same point, i.e., $\gamma(0)=\delta(0)$, then
$$
d_g(\gamma(t),\delta(t))\le t\  d_g(\gamma(1),\delta(1)).
$$
\end{teo}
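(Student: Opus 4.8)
The statement is the non-positive curvature of $G^+$ in Busemann's sense, and the plan is to separate it into a purely formal reduction to the ``common origin'' case — which is exactly the ``in particular'' assertion — followed by one genuine operator inequality for that case. For the reduction, recall (from \cite{cprIEOT}, applied to $\a$ in place of $M_2(\a)$) that any $a,b\in G^+$ are joined by the unique geodesic $\gamma_{a,b}(t)=a^{1/2}(a^{-1/2}ba^{-1/2})^ta^{1/2}$, and that $G$ acts transitively and isometrically on $(G^+,d_g)$ by $g\cdot a=(g^*)^{-1}ag^{-1}$ (the element conjugating $(g\cdot a)^{-1/2}(g\cdot b)(g\cdot a)^{-1/2}$ to $a^{-1/2}ba^{-1/2}$ is unitary, so $d_g$ is preserved). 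Given geodesics $\gamma,\delta\colon[0,1]\to G^+$, let $\eta$ be the geodesic from $\gamma(0)$ to $\delta(1)$ and set $m_t=\eta(t)$; then
$$
f(t)=d_g(\gamma(t),\delta(t))\le d_g(\gamma(t),m_t)+d_g(m_t,\delta(t)),
$$
and the two right-hand terms each compare the points at equal parameter of two geodesics issuing from a common point — $\gamma(0)$ for the first pair, and $\delta(1)$ for the second after reversing orientations — so the common-origin estimate gives $d_g(\gamma(t),m_t)\le t\,d_g(\gamma(1),\delta(1))$ and $d_g(m_t,\delta(t))\le(1-t)\,d_g(\gamma(0),\delta(0))$, hence $f(t)\le(1-t)f(0)+tf(1)$. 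Applying this to every subinterval of the domain yields convexity of $f$.

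For the common-origin case I would use the isometric action to normalize $\gamma(0)=\delta(0)=1$, so that $\gamma(t)=e^{tX}$, $\delta(t)=e^{tY}$ with $X,Y\in\a_s$, and the goal becomes $d_g(e^{tX},e^{tY})\le t\,d_g(e^X,e^Y)$ for $t\in[0,1]$. I would then set up the geodesic variation joining $\gamma$ to $\delta$: let $\sigma(s)=e^{X/2}(e^{-X/2}e^Ye^{-X/2})^se^{X/2}$ be the length-realizing geodesic from $e^X$ to $e^Y$, put $L(s)=\log\sigma(s)\in\a_s$, and let $\mu_s(t)=e^{tL(s)}$ be the geodesic from $1$ to $\sigma(s)$, so that $\mu_0=\gamma$ and $\mu_1=\delta$. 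Since $d_g(e^{tX},e^{tY})\le\int_0^1\|\partial_s\mu_s(t)\|_{\mu_s(t)}\,ds$ while $d_g(e^X,e^Y)=\int_0^1\|\partial_s\mu_s(1)\|_{\mu_s(1)}\,ds$, it is enough to establish, for each fixed $s$, the pointwise bound $\|\partial_s\mu_s(t)\|_{\mu_s(t)}\le t\,\|\partial_s\mu_s(1)\|_{\mu_s(1)}$. Differentiating $e^{tL(s)}$ in $s$ and using $\|Z\|_{e^{tL}}=\|e^{-tL/2}Ze^{-tL/2}\|$, this unwinds to
$$
\|\partial_s\mu_s(t)\|_{\mu_s(t)}=t\left\|\int_{-1/2}^{1/2}e^{rtL(s)}L'(s)e^{-rtL(s)}\,dr\right\|,
$$
so the required bound is exactly the monotonicity in $\lambda\ge 0$ of $\lambda\mapsto\big\|\int_{-1/2}^{1/2}e^{r\lambda L}He^{-r\lambda L}\,dr\big\|=\|d(\exp)_{\lambda L}(H)\|_{\exp(\lambda L)}$, for $L,H\in\a_s$ — the monotone strengthening of the Corach--Porta--Recht ``exponential metric increasing'' inequality.

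The whole difficulty sits in this last operator-norm monotonicity; everything preceding it is formal. Here I would approximate $L$ by an element with finite spectrum and diagonalize it, so that $\int_{-1/2}^{1/2}e^{r\lambda L}He^{-r\lambda L}\,dr$ becomes the Schur multiplier of $H$ with symbol $\big[\tfrac{\sinh(\lambda(\ell_i-\ell_j)/2)}{\lambda(\ell_i-\ell_j)/2}\big]_{i,j}$; since $x\mapsto(\sinh x)/x$ is even, $\ge 1$ and increasing in $|x|$, the point is that along the family of symbols obtained by dilating the eigenvalue configuration by $\lambda$ the operator norm of the Schur-multiplied matrix is non-decreasing — the analytic fact about $(\sinh x)/x$ supplied by \cite{cprIEOT} and \cite{cprILLINOIS} (the isolated case $\lambda\colon0\to1$ already gives $d_g(e^X,e^Y)\ge\|X-Y\|$). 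I expect this Schur-multiplier monotonicity to be the main obstacle; an alternative that avoids the geodesic variation is to attack the scalar inequality $\|\log(e^{-tX/2}e^{tY}e^{-tX/2})\|\le t\,\|\log(e^{-X/2}e^Ye^{-X/2})\|$ directly via the integral representation of the logarithm, but the essential point — bounding how conjugation by $e^{r\lambda L}$ inflates norms as $\lambda$ grows — is the same, and is precisely what the cited papers resolve.
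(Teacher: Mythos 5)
The paper itself offers no proof of this theorem: it is quoted from \cite{cprILLINOIS}, so the only fair comparison is with the argument of that reference, which your proposal essentially reconstructs. Your reduction of convexity to the common-origin estimate is correct (including the orientation reversal needed for the term $d_g(m_t,\delta(t))\le(1-t)d_g(\gamma(0),\delta(0))$, and the passage from the chord inequality on every subinterval to convexity, which uses continuity of $f$). Your treatment of the common-origin case is also the right one: the variation $\mu_s(t)=e^{tL(s)}$, the comparison of the length of $s\mapsto\mu_s(t)$ with that of the minimizing geodesic $s\mapsto\mu_s(1)$, and the computation $\|\partial_s\mu_s(t)\|_{\mu_s(t)}=t\,\bigl\|\int_{-1/2}^{1/2}e^{rtL}L'e^{-rtL}\,dr\bigr\|$ are all correct, and they do reduce everything to the monotonicity in $\lambda\in[0,1]$ of $\lambda\mapsto\bigl\|\int_{-1/2}^{1/2}e^{r\lambda L}He^{-r\lambda L}\,dr\bigr\|$.

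The one genuine gap is your justification of that monotonicity. Observing that the symbol $\sinh(\lambda x)/(\lambda x)$ is even, $\ge 1$ and increasing in $|\lambda x|$ proves nothing here: entrywise domination of Schur-multiplier symbols gives no control whatsoever of the norms of the multiplied matrices, so "dilating the eigenvalue configuration increases the norm" does not follow from pointwise growth of the symbol. The ingredient that actually closes the argument — and the real content of the lemma in \cite{cprILLINOIS} — is that for $0\le\alpha\le1$ the \emph{ratio} $\frac{\sinh(\alpha u)}{\alpha\sinh u}$ is a positive definite function of $u\in\mathbb{R}$ with value $1$ at $u=0$ (its Fourier transform is positive). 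Consequently the Schur multiplier carrying the symbol at parameter $1$ to the symbol at parameter $\lambda=\alpha$ is unital and completely positive, hence contractive, which is exactly the inequality $\bigl\|\int_{-1/2}^{1/2}e^{r\lambda L}He^{-r\lambda L}\,dr\bigr\|\le\bigl\|\int_{-1/2}^{1/2}e^{rL}He^{-rL}\,dr\bigr\|$. This is the same positive-definiteness device that yields the exponential-metric-increasing inequality of \cite{cprIEOT} (the case $\lambda:0\to1$ you mention), not a separate elementary monotonicity of $(\sinh x)/x$. You correctly flag this step as the crux, but as written the proposed justification for it would fail; with the positive-definiteness lemma supplied, your proof is complete and coincides with the cited one.
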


There is a natural embedding of $\q_\rho$ in $Gl_2^+(\a)$:
\begin{teo} \rm{ (\cite{prILLINOIS})}

The embedding of $\q_\rho$ in $Gl_2(\a)^+$ is given by \cite{prILLINOIS} 
$$
\q_\rho\hookrightarrow Gl_2^+(\a) , \ \epsilon\to \rho\epsilon.
$$
This embedding has the following properties
\begin{enumerate}
\item
Let $\epsilon_1, \epsilon_2\in\q_\rho$. Then the unique geodesic of $Gl_2(\a)^+$ joining $\rho\epsilon_1$ and $\rho\epsilon_2$ lies in (the image of) $\q_\rho$ (under the above embedding).
\item
$\q_\rho$ is an homogeneous space under the action of $\u(\theta_\rho)$, $Gl_2^+(\a)$ is an homogeneous space under the action of $Gl_2(\a)$. The embedding is equivariant for these actions. 
\end{enumerate}
\end{teo}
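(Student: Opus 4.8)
The plan is to first pin down the image of the embedding and then to read off both properties from facts already recorded for $Gl_2(\a)^+$. I would write $\Lambda\colon M_2(\a)\to M_2(\a)$, $\Lambda(x)=\rho x$; this is an affine diffeomorphism of the ambient space with $\Lambda^{-1}=\Lambda$ (since $\rho^2=1$), and $\epsilon\mapsto\rho\epsilon$ is just $\Lambda|_{\q_\rho}$. The identification to establish is $\Lambda(\q_\rho)=\u(\theta_\rho)^+:=\u(\theta_\rho)\cap Gl_2(\a)^+$: if $\epsilon^2=1$ and $a=\rho\epsilon$ is positive invertible, then $\epsilon=\rho a$ and $\epsilon^2=1$ reads $\rho a\rho=a^{-1}$, which for selfadjoint $a$ is exactly the defining relation $\rho a^*\rho=a^{-1}$ of $\u(\theta_\rho)$; conversely a positive invertible $a$ with $\rho a\rho=a^{-1}$ gives $\epsilon:=\rho a\in\q_\rho$. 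Since $\q_\rho$ is a submanifold of $M_2(\a)$ (by \cite{cpr}) and $\Lambda$ is a global diffeomorphism of $M_2(\a)$, this already shows that $\u(\theta_\rho)^+$ is a submanifold of $Gl_2(\a)^+$ and that $\epsilon\mapsto\rho\epsilon$ is a diffeomorphism onto it, which is the content of the word ``embedding''.

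For item (1), I would set $a_i=\rho\epsilon_i$, so $\rho a_i\rho=a_i^{-1}$, and use the explicit geodesic formula recalled above: the unique geodesic of $Gl_2(\a)^+$ joining $a_1$ and $a_2$ is $\gamma(t)=a_1^{1/2}(a_1^{-1/2}a_2a_1^{-1/2})^t a_1^{1/2}$. Because $x\mapsto\rho x\rho$ is a $*$-automorphism of $M_2(\a)$, it commutes with the continuous functional calculus on positive invertible elements; in particular $\rho a_1^{\pm 1/2}\rho=a_1^{\mp 1/2}$, and $\rho(a_1^{-1/2}a_2a_1^{-1/2})\rho=a_1^{1/2}a_2^{-1}a_1^{1/2}=(a_1^{-1/2}a_2a_1^{-1/2})^{-1}$, whence $\rho(a_1^{-1/2}a_2a_1^{-1/2})^t\rho=(a_1^{-1/2}a_2a_1^{-1/2})^{-t}$. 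Multiplying the three factors and inserting $\rho^2=1$ gives $\rho\gamma(t)\rho=\gamma(t)^{-1}$ for every $t$, i.e. $\gamma(t)\in\u(\theta_\rho)^+=\Lambda(\q_\rho)$; hence the geodesic stays in the image, and $\Lambda^{-1}\circ\gamma$ is the corresponding geodesic in $\q_\rho$. A conceptual alternative I would also mention: $F(x)=\rho x^{-1}\rho$ is a $d_g$-isometry of $Gl_2(\a)^+$ (it is the inversion $x\mapsto x^{-1}$, itself an isometry, followed by the action of $\rho\in Gl_2(\a)$) whose fixed-point set is $\u(\theta_\rho)^+$, and an isometry of a space with unique minimizing geodesics that fixes two points fixes the entire geodesic between them.

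For item (2), transitivity of $Gl_2(\a)$ on $Gl_2(\a)^+$ is recalled above and transitivity of $\u(\theta_\rho)$ on $\q_\rho$ is the result of \cite{cpr} quoted in the text; what remains is to check that $\Lambda$ intertwines the two actions. Regarding $\tilde g\in\u(\theta_\rho)$ as an element of $Gl_2(\a)$ acting on $Gl_2(\a)^+$ by $\tilde g\cdot a=(\tilde g^*)^{-1}a\tilde g^{-1}$, I would compute $\Lambda(\tilde g\epsilon\tilde g^{-1})=\rho\tilde g\epsilon\tilde g^{-1}$ and use $\rho\tilde g=(\tilde g^*)^{-1}\rho$, which is merely a rewriting of the defining relation $\rho\tilde g^*\rho=\tilde g^{-1}$, to obtain $\rho\tilde g\epsilon\tilde g^{-1}=(\tilde g^*)^{-1}(\rho\epsilon)\tilde g^{-1}=\tilde g\cdot\Lambda(\epsilon)$. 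Thus $\Lambda$ is equivariant, finishing (2).

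I do not expect a genuine obstacle: the formal work is short, and the substance is carried by the imported results — the explicit geodesic together with the uniqueness and minimality of geodesics in $Gl_2(\a)^+$ from \cite{cprIEOT}, and the submanifold structure and transitivity of $\u(\theta_\rho)$ on $\q_\rho$ from \cite{cpr}. The only mildly delicate point is the compatibility of conjugation by $\rho$ with the square roots and real powers entering $\gamma(t)$, and this is routine precisely because $x\mapsto\rho x\rho$ is a $*$-automorphism of $M_2(\a)$. So the plan reduces to: identify the image as $\u(\theta_\rho)^+$, verify the $\rho$-invariance of the geodesic by the direct computation above (or via the isometric involution $F$), and verify the intertwining identity.
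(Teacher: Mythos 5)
Your proposal is correct, but note that the paper itself offers no proof of this theorem: it is stated as an imported result from \cite{prILLINOIS} (and the transitivity in item (2) from \cite{cpr}), so there is no in-text argument to compare yours against. Your reconstruction is sound and self-contained modulo exactly the facts the paper does recall: the identification $\rho\,\q_\rho=\u(\theta_\rho)\cap Gl_2(\a)^+$ agrees with the paper's earlier remark that a positive $\tilde b$ lies in the unitary group of the form iff $\rho\tilde b\rho=\tilde b^{-1}$; the computation $\rho\gamma(t)\rho=\gamma(t)^{-1}$ for $\gamma(t)=a_1^{1/2}(a_1^{-1/2}a_2a_1^{-1/2})^ta_1^{1/2}$ checks out, since conjugation by $\rho$ is a $*$-automorphism commuting with the functional calculus; and the intertwining identity $\rho\tilde g=(\tilde g^*)^{-1}\rho$ gives equivariance immediately. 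The only point I would flag is in your ``conceptual alternative'': concluding that an isometry fixing two points fixes the whole geodesic between them requires knowing that metric isometries carry connection geodesics to connection geodesics, which here rests on the identification of those geodesics with the \emph{unique} minimizing curves (a fact the paper does state); since your primary argument is the direct computation, this does not affect correctness.
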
 

Thus, if we endow $\q_\rho$ with the geometry induced by this embedding, it becomes a non positively curved metric length space.

\bigskip

We use these facts to translate  to $\h$ and $\d$  the metric structure of $\q_\rho$, by means of the equivariant diffeomorphisms   $\Phi_H$ and $\Phi_D$, respectively. 
\begin{defi}
For any $h_1,h_2\in\h$ and $z_1,z_2\in\d$,
$$
d_H(h_1,h_2)=d_g(\rho_H \Phi_H(h_1),\rho_H \Phi_H(h_2))
$$
and
$$
d_D(z_1,z_2)=d_g(\rho_D \Phi_D(z_1),\rho_D \Phi_D(z_2)).
$$
\end{defi}
Therefore one has:
\begin{coro}
Both $(\h,d_H)$ and $(\d,d_D)$ are non positively curved metric length spaces. In other words,
if $\delta_1,\delta_2$ are two geodesics in $\h$ (resp. $\d$), the function
$$
f(t)=d_H(\delta_1(t),\delta_2(t))
$$
(resp. $f(t)=d_D(\delta_1(t),\delta_2(t))$) are convex.

If, additionally,  $\delta_1(0)=\delta_2(0)$ then, for $t\in[0,1]$
\begin{equation}\label{cuerda y arco}
d_H(\delta_1(t),\delta_2(t)) \le t\   d_H(\delta_1(1),\delta_2(1))  \ {\rm ( } \hbox{ resp. }  d_D(\delta_1(t),\delta_2(t))\le  t \ d_D(\delta_1(1),\delta_2(1)) \ {\rm )} .
\end{equation}

The diffeomorphism
$$
\Gamma:\h\to\d
$$
is an isometry.
\end{coro}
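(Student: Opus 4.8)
The plan is to transport everything through the equivariant diffeomorphisms $\Phi_H$, $\Phi_D$ and the isometric embedding $\q_\rho\hookrightarrow Gl_2(\a)^+$, $\epsilon\mapsto\rho\epsilon$, recalled above. First I would note that, by the very definition of $d_H$ and $d_D$, the maps $h\mapsto\rho_H\Phi_H(h)$ and $z\mapsto\rho_D\Phi_D(z)$ are diffeomorphisms of $\h$ and $\d$ onto the images of $\q_{\rho_H}$ and $\q_{\rho_D}$ in $Gl_2(\a)^+$, and that $d_H$, $d_D$ are precisely the pullbacks along these maps of the geodesic distance $d_g$ of $Gl_2(\a)^+$. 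So $(\h,d_H)$ and $(\d,d_D)$ are isometric to the images of $\q_{\rho_H}$ and $\q_{\rho_D}$ with the metric induced from $d_g$. By the embedding theorem quoted above, the unique $d_g$-geodesic of $Gl_2(\a)^+$ joining two points of $\q_{\rho_H}$ lies inside $\q_{\rho_H}$ and is $d_g$-minimizing, hence it is minimizing within $\q_{\rho_H}$, so $d_H$ is realized by the lengths of curves lying in $\q_{\rho_H}$ (and similarly for $\q_{\rho_D}$). I would take these distinguished curves, pulled back to $\h$ and $\d$, as the geodesics of $\h$ and $\d$.

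Next, for the non positive curvature and the convexity statement: if $\delta_1,\delta_2$ are geodesics of $\h$, then $\rho_H\Phi_H\circ\delta_1$ and $\rho_H\Phi_H\circ\delta_2$ are geodesics of $Gl_2(\a)^+$ contained in $\q_{\rho_H}$, and
$$
f(t)=d_H(\delta_1(t),\delta_2(t))=d_g\bigl(\rho_H\Phi_H(\delta_1(t)),\rho_H\Phi_H(\delta_2(t))\bigr)
$$
is convex because $(Gl_2(\a)^+,d_g)$ has the convexity property of distance between geodesics. This shows $(\h,d_H)$ is a non positively curved metric length space, and the chord–arc inequality in (\ref{cuerda y arco}) is the specialization to $\delta_1(0)=\delta_2(0)$ (so $f(0)=0$) together with convexity of $f$ on $[0,1]$. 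The same argument, with $\Phi_D$ and $\rho_D$, handles $(\d,d_D)$.

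Finally, to see that $\Gamma$ is an isometry: from the relation (\ref{equivaHD}), which reads $\Phi_H(h)=U\Phi_D(\Gamma(h))U^*$, and from $U\rho_DU^*=\rho_H$, one gets
$$
\rho_D\Phi_D(\Gamma(h))=\rho_DU^*\Phi_H(h)U=U^*(U\rho_DU^*)\Phi_H(h)U=U^*\bigl(\rho_H\Phi_H(h)\bigr)U .
$$
Since $U^*$ is unitary, the map $X\mapsto U^*XU=U^*X(U^*)^*$ is the action $\tilde g\cdot\tilde a=(\tilde g^*)^{-1}\tilde a\tilde g^{-1}$ of $Gl_2(\a)$ on $Gl_2(\a)^+$ with $\tilde g=U^*$, hence a $d_g$-isometry of $Gl_2(\a)^+$, and it carries $\q_{\rho_H}$ onto $\q_{\rho_D}$ (and geodesics to geodesics). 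Therefore
$$
d_D(\Gamma(h_1),\Gamma(h_2))=d_g\bigl(U^*\rho_H\Phi_H(h_1)U,\,U^*\rho_H\Phi_H(h_2)U\bigr)=d_g\bigl(\rho_H\Phi_H(h_1),\rho_H\Phi_H(h_2)\bigr)=d_H(h_1,h_2),
$$
so $\Gamma$ is a distance-preserving bijection, and it maps geodesics of $\h$ to geodesics of $\d$.

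The step I expect to require most care is the first one: checking that $d_H$ and $d_D$ are genuinely the intrinsic (length) metrics of $\h$ and $\d$, i.e.\ that they are realized by curves lying in $\q_{\rho_H}$, $\q_{\rho_D}$, which is exactly where the geodesic convexity of the embedding $\q_\rho\hookrightarrow Gl_2(\a)^+$ is indispensable. Once that is in place, the convexity of $f(t)$, the chord–arc estimate, and the isometry property of $\Gamma$ are all formal consequences of the corresponding facts in $Gl_2(\a)^+$.
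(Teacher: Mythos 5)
Your proposal is correct and follows essentially the same route as the paper: the convexity and chord--arc statements are imported from $Gl_2(\a)^+$ through the isometric, geodesically convex embedding of $\q_{\rho}$ and the definitions of $d_H$, $d_D$ as pullbacks, and the isometry of $\Gamma$ follows from $\Phi_H(h)=U\Phi_D(\Gamma(h))U^*$ together with $U\rho_DU^*=\rho_H$ and the $d_g$-invariance of the action of $Gl_2(\a)$. In fact your write-up spells out the ``straightforward computation'' that the paper leaves implicit, so it is a more detailed version of the same argument.
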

\begin{proof}
Recall formula (\ref{Uactua}) in Remark (\ref{HD}): $\Phi_H(h)=U \Phi_D(\gamma(h)) U^*$. A straightforward computation shows that this implies that if $h_1, h_2\in\h$, then
$$
d_D(\Gamma(h),\Gamma(h_2))=d_H(h_1,h_2).
$$
\end{proof}
In \cite{meyer}, treating other kind of problems, a similar homeomorphism was established, between $\d$ and the set of elements in $\a$ with positive and invertible {\it real} part (i.e.   $-i\h$)

Recall that the group $\u(\theta_H)$ acts transitively on $\h$. This group is, in turn, isomorphic to $\u(\theta_D)$, which acts transitively on $\d$. Also recall (Proposition (\ref{equivPhi})), that these actions are equivariant for $\Phi_H$ and $\Phi_D$, respectively.
Then one has:
\begin{coro}
The actions of $\spa$ and $\spb$ on $\h$ and $\d$, respectively, are isometric. 
\end{coro}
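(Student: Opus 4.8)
The plan is to pull everything back to the homogeneous space $Gl_2(\a)^+$, where the relevant group action is known to be isometric, and to combine this with the equivariance of $\Phi_H$ recorded in Proposition \ref{equivPhi}. By definition, $d_H$ is the pullback under the map $\Psi_H:\h\to Gl_2(\a)^+$, $\Psi_H(h)=\rho_H\Phi_H(h)$, of the geodesic distance $d_g$ on $Gl_2(\a)^+$, and $d_D$ is the pullback under $\Psi_D(z)=\rho_D\Phi_D(z)$. Hence it suffices to prove that for each $\tilde g\in\spa$ the map $\Psi_H$ intertwines the action $h\mapsto\tilde g\cdot h$ on $\h$ with the action $\tilde a\mapsto(\tilde g^*)^{-1}\tilde a\tilde g^{-1}$ of $Gl_2(\a)$ on $Gl_2(\a)^+$, and then to invoke the invariance of $d_g$ under this action.

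First I would establish the intertwining. By Proposition \ref{equivPhi}, $\Phi_H$ is $\spa$-equivariant, $\Phi_H(\tilde g\cdot h)=\tilde g\,\Phi_H(h)\,\tilde g^{-1}$. On the other hand, $\tilde g\in\spa$ means $\rho_H\tilde g^*\rho_H=\tilde g^{-1}$; taking inverses and using $\rho_H^2=1$ yields $\rho_H\tilde g\rho_H=(\tilde g^*)^{-1}$, that is, $\rho_H\tilde g=(\tilde g^*)^{-1}\rho_H$. Therefore
$$
\Psi_H(\tilde g\cdot h)=\rho_H\tilde g\,\Phi_H(h)\,\tilde g^{-1}=(\tilde g^*)^{-1}\bigl(\rho_H\Phi_H(h)\bigr)\tilde g^{-1}=(\tilde g^*)^{-1}\,\Psi_H(h)\,\tilde g^{-1},
$$
which is precisely the claimed intertwining (it is the equivariance of the embedding $\q_{\rho_H}\hookrightarrow Gl_2(\a)^+$, $\epsilon\mapsto\rho_H\epsilon$, from the embedding theorem of \cite{prILLINOIS}). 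Next I would note that $d_g$ is invariant under $\tilde a\mapsto(\tilde g^*)^{-1}\tilde a\tilde g^{-1}$: this is intrinsic to the homogeneous-space description of $Gl_2(\a)^+$ in \cite{cprIEOT}, and is in any case immediate from the formula $d_g(\tilde a,\tilde b)=\|\log(\tilde a^{-1/2}\tilde b\tilde a^{-1/2})\|$, since replacing $(\tilde a,\tilde b)$ by $((\tilde g^*)^{-1}\tilde a\tilde g^{-1},(\tilde g^*)^{-1}\tilde b\tilde g^{-1})$ turns $\tilde a^{-1/2}\tilde b\tilde a^{-1/2}$ into a positive invertible element conjugate to it in $Gl_2(\a)$ (conjugate to $\tilde a^{-1}\tilde b$, hence to $\tilde a^{-1/2}\tilde b\tilde a^{-1/2}$), and conjugate elements share their spectrum, hence the value of $\|\log(\cdot)\|$. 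Putting the two facts together, for $h_1,h_2\in\h$ and $\tilde g\in\spa$,
$$
d_H(\tilde g\cdot h_1,\tilde g\cdot h_2)=d_g(\Psi_H(\tilde g\cdot h_1),\Psi_H(\tilde g\cdot h_2))=d_g(\Psi_H(h_1),\Psi_H(h_2))=d_H(h_1,h_2).
$$

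For the disk model there are two equally short routes. One is to repeat the argument verbatim, using that $\Phi_D$ is $\spb$-equivariant and that $\rho_D\tilde g\rho_D=(\tilde g^*)^{-1}$ for $\tilde g\in\spb$. The other is to deduce it from the half-space case: $\Gamma:\h\to\d$ is an isometry, it satisfies $\Gamma(\tilde g\cdot h)=(U^*\tilde g U)\cdot\Gamma(h)$ by (\ref{gamma y U}), and $U^*\spa U=\spb$, so isometry of the $\spa$-action on $\h$ transports to isometry of the $\spb$-action on $\d$. The one step here that is not routine bookkeeping is the $Gl_2(\a)$-invariance of $d_g$, and even that collapses immediately to the explicit distance formula; everything else is just the relations defining $\spa$, $\spb$ and the already-proved equivariance of $\Phi_H$, $\Phi_D$.
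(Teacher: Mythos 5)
Your proposal is correct and follows essentially the same route as the paper: compose the $\spa$-equivariant diffeomorphism $\Phi_H$ with the equivariant embedding $\epsilon\mapsto\rho_H\epsilon$ of $\q_{\rho_H}$ into $Gl_2(\a)^+$, and invoke the isometry of the $Gl_2(\a)$-action there. The only difference is that you verify explicitly (and correctly) the two facts the paper simply cites, namely the intertwining identity $\rho_H\tilde g=(\tilde g^*)^{-1}\rho_H$ and the invariance of $d_g$.
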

\begin{proof}
The diffeomorphism $\Phi_H$, followed by the embedding of $\q_{\rho_H}$, is a composition of maps which are equivariant for the action of $\spa\subset Gl_2(\a)$. On the other hand, the action of this latter group on $Gl_2^+(\a)$ is isometric (\cite{cpr}, p.66).
\end{proof}

\begin{ejem}
Let us compute the $d_D$ distance between $0$ and $z$ in $\d$:
$$
d_D(0,z)=\| \log(\Phi_D(z)\rho_D)\|=\|\log \left( \begin{array}{cc} 2(1-z^*z)^{-1} & 2(1-z^*z)^{-1}z^* \\ 2z(1-z^*z)^{-1} & 2z(1-z^*z)^{-1}z^*+1 \end{array} \right) \|.
$$
Straightforward computations show that this matrix above can be factorized
$$
\left( \begin{array}{cc} 2(1-z^*z)^{-1} & 2(1-z^*z)^{-1}z^* \\ 2z(1-z^*z)^{-1} & 2z(1-z^*z)^{-1}z^*+1 \end{array} \right)=\Delta_1\Delta_2,
$$
where
$$
\Delta_1=\left( \begin{array}{cc} (1-z^*z)^{-1} & 0 \\ 0 & (1-zz^*)^{-1} \end{array} \right)\ \  \hbox{ and } \ \ \  \Delta_2=\left( \begin{array}{cc} 1+z^*z & 2z^* \\ 2z & 1+zz^* \end{array} \right).
$$
(A key fact in this computation is that $z(1-z^*z)^{-1}=(1-zz^*)^{-1}z$). 
Denote by $\Omega=\left(\begin{array}{cc} 0 & z^* \\ z & 0 \end{array}\right)$. Note that $\Omega^*=\Omega$ and $\|\Omega\|<1$. Then
$$
\Delta_1=(1-\Omega^2)^{-1} \ \hbox{ and } \Delta_2=(1+\Omega)^2.
$$
Therefore $\log(\Delta_1\Delta_2)=\log(1+\Omega)-\log(1-\Omega)$. Using the power series 
$$
\log(1+t)-\log(1-t)=2\sum_{k=0}^\infty \frac{t^{2k+1}}{2k+1}
$$
we get that  
$$
\log(1+\Omega)-\log(1-\Omega)=2\left(\begin{array}{cc} 0 & z^*\sum_{k=0}^\infty \frac{1}{2k+1} (zz^*)^k \\ z\sum_{k=0}^\infty \frac{1}{2k+1} (z^*z)^k & 0 \end{array} \right)
$$
The norm of this matrix equals
$$
\|\log(1+\Omega)-\log(1-\Omega)\|=\|(\log(1+\Omega)-\log(1-\Omega))^2\|^{1/2}.
$$
Note that $(\log(1+\Omega)-\log(1-\Omega))^2$ equals
$$
4\left( \begin{array}{cc}  (z^*\sum_{k=0}^\infty \frac{1}{2k+1} (zz^*)^k)(z\sum_{k=0}^\infty \frac{1}{2k+1} (z^*z)^k)   &  0  \\ 0 & (z\sum_{k=0}^\infty \frac{1}{2k+1} (z^*z)^k)( z^*\sum_{k=0}^\infty \frac{1}{2k+1} (zz^*)^k) \end{array} \right)
$$
$$
=4\left( \begin{array}{cc} z^*z(\sum_{k=0}^\infty \frac{1}{2k+1} (z^*z)^k)^2 & 0 \\ 0 & zz^*(\sum_{k=0}^\infty \frac{1}{2k+1} (zz^*)^k)^2\end{array} \right)
$$
$$
= \left( \begin{array}{cc} \log(1+|z|)-\log(1-|z|) & 0 \\ 0 & \log(1+|z^*|)-\log(1-|z^*|) \end{array}\right)^2.
$$
The square root of the norm of this matrix is
$$
\max\{ \|\log(1+|z|)-\log(1-|z|)\|, \|\log(1+|z^*|)-\log(1-|z^*|)\| \}.
$$
The function $f(t)=\log(1+t)-\log(1-t)$ is strictly increasing in $[0,1)$, with $f(0)=0$. Thus (using that $\||z|\|=\|z\|$),
$$
\|log(1+|z|)-\log(1-|z|)\|=\max\{|f(t)|: t\in\sigma(|z|)\}=f(\|z\|).
$$
Analogously, $\|log(1+|z^*|)-\log(1-|z^*|)\|=f(\|z\|)$.
Then
$$
d_D(0,z)=\log( \frac{1+\|z\|}{1-\|z\|}).
$$
In the scalar case $\a=\mathbb{C}$, this norm  equals
$$
d_D(0,z)=\log\left(\frac{1+|z|}{1-|z|}\right),
$$
which is the Poincar\'e distance in the open unit disk $\mathbb{D}$.
\end{ejem}
\section{The covariant derivative in $\h$}

In this section we compute explicitly the covariant derivative induced by the reductive structure.
Recall the decomposition (\ref{horizontalH}) of the Banach-Lie algebra of $\u(\theta_H)$),
$$
\x=\x_{as}\oplus\x_{s}=\{\tilde{\beta}\in M_2(\a)_{as}: \tilde{\beta}J=J\tilde{\beta}\}\oplus \{\tilde{\gamma}\in M_2(\a)_s: \tilde{\gamma}J=-J\tilde{\gamma}\}.
$$
Elements $X\in\x$ are of the form $X=X_0+X_h$,
$$
X=\left( \begin{array}{cc} x_{11} &  x_{12}  \\  -x_{12} & -x_{11} \end{array}\right) + \left( \begin{array}{cc} \alpha &  \beta  \\  \beta & -\alpha \end{array}\right)
$$
with $x^*_{11}=-x_{11}$, and all other entries selfadjoint. The left hand subspace $\x_{as}$ is the Banach-Lie algebra  of the isotropy group of the action at the element $i\in\h$. The right hand subspace $\x_s$ is the {\it horizontal} space at this point.

The computation of the covariant derivative  will be done in several steps. 

\underline{\bf Step 1.} First we compute the differential of the map $\pi_i:\spa\to \h$, $\pi_i(\tilde{g})=\tilde{g}\cdot i$, at the identity $1\in\spa$. Recall that $\pi_i(\tilde{g})=(g_{22} i + g_{21}(g_{12} i + g_{11})^{-1}$. Then, differentiating at $1$, we get
$$
d(\pi_i)_1(\tilde{\gamma})=\gamma_{21}+\gamma_{12}+i(\gamma_{22}-\gamma_{11}).
$$
If $\tilde{\gamma}$ is horizontal, $d(\pi_i)_1(\tilde{\gamma})= 2\gamma_{12}-2i \gamma_{11}$. Then:
\begin{teo}
The $1$-form of the reductive connection at $i\in\h$ is 
$$
\kappa_i(\zeta)=\frac12 \left( \begin{array}{cc} -\Upsilon & \chi \\ \chi & \Upsilon \end{array}\right) \ , \hbox{ if } \zeta=\chi+i \Upsilon.
$$
\end{teo}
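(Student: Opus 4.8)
The plan is to identify $\kappa_i$ with the horizontal lifting map of the reductive connection at the base point $i\in\h$, and then to invert that map by hand using the computation of $d(\pi_i)_1$ already carried out in Step 1. By the reductive decomposition $\x=\x_{as}\oplus\x_s$ of (\ref{horizontalH}), with $\x_{as}$ the Banach-Lie algebra of the isotropy group at $i$ and $\x_s$ the horizontal space, the connection $1$-form $\kappa_i$ at $i$ is by definition the linear inverse of the restriction $d(\pi_i)_1|_{\x_s}:\x_s\to (T\h)_i=\a$; so the whole task reduces to checking that this restriction is an isomorphism and writing down its inverse.

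First I would record that a horizontal element $\tilde\gamma\in\x_s$ has the form $\tilde\gamma=\left(\begin{array}{cc}\alpha&\beta\\ \beta&-\alpha\end{array}\right)$ with $\alpha^*=\alpha$, $\beta^*=\beta$ (so $\gamma_{11}=\alpha$, $\gamma_{12}=\gamma_{21}=\beta$, $\gamma_{22}=-\alpha$). Substituting into the Step 1 formula $d(\pi_i)_1(\tilde\gamma)=\gamma_{21}+\gamma_{12}+i(\gamma_{22}-\gamma_{11})$ yields $d(\pi_i)_1(\tilde\gamma)=2\beta-2i\alpha$. Now take a tangent vector $\zeta\in(T\h)_i=\a$ and write its (unique) Cartesian decomposition $\zeta=\chi+i\Upsilon$ with $\chi=\frac12(\zeta+\zeta^*)$ and $\Upsilon=\frac{1}{2i}(\zeta-\zeta^*)$ selfadjoint. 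Solving $2\beta-2i\alpha=\chi+i\Upsilon$ by separating selfadjoint and anti-selfadjoint parts forces $\beta=\frac12\chi$ and $\alpha=-\frac12\Upsilon$; this simultaneously shows that $d(\pi_i)_1|_{\x_s}$ is a bijection onto $\a$ and produces its inverse, $\zeta=\chi+i\Upsilon\mapsto \frac12\left(\begin{array}{cc}-\Upsilon&\chi\\ \chi&\Upsilon\end{array}\right)$, which is exactly the asserted formula for $\kappa_i(\zeta)$. One checks in passing that this matrix indeed lies in $\x_s$ (its entries $\alpha=-\Upsilon/2$ and $\beta=\chi/2$ are selfadjoint), so the answer is consistent.

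I do not expect a genuine obstacle here: the analytic content was already settled in Step 1 and in the earlier identification of $\x_{as}$ with the isotropy algebra, which is precisely what guarantees that $d(\pi_i)_1|_{\x_s}$ is surjective onto $\a$ and not merely injective. The only points that call for a little care are the bookkeeping of the matrix entries together with the selfadjointness constraints on $\alpha,\beta,\chi,\Upsilon$ — this is what makes the real/imaginary separation of $2\beta-2i\alpha=\chi+i\Upsilon$ legitimate — and being explicit that $\kappa_i$ here denotes the horizontal-lift nexus of the reductive connection at $i$, rather than an isotropy-algebra-valued connection form on the total space $\spa$.
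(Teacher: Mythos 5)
Your proposal is correct and follows essentially the same route as the paper: the paper's own argument is exactly the Step~1 computation $d(\pi_i)_1(\tilde\gamma)=2\gamma_{12}-2i\gamma_{11}$ on horizontal vectors, followed by inverting this map via the Cartesian decomposition $\zeta=\chi+i\Upsilon$. Your added remarks on the uniqueness of the selfadjoint/anti-selfadjoint separation and on the consistency check that the resulting matrix lies in $\x_s$ only make explicit what the paper leaves implicit.
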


\underline{\bf Step 2.}
For any given $h=x+i y$, one can find an element $\tilde{b}\in\b\subset\spa$ (the Borel subgroup of $\spa$) such that $\tilde{b}\cdot i =h$. For instance,
$$
\tilde{b}=\left( \begin{array}{cc} y^{-1/2} & 0 \\ xy^{-1/2} & y^{1/2} \end{array} \right) \ \hbox{ with inverse } \ \tilde{b}^{-1}=\left( \begin{array}{cc} y^{1/2} & 0 \\ -y^{-1/2}x & y^{-1/2} \end{array} \right).
$$ 
Straightforward computations show that $\tilde{b}\cdot i=h$  (and $\tilde{b}^{-1}\cdot h=i$).

\underline{\bf Step 3}  Let us compute now the differential of the action of $\tilde{g}\in\spa$ on tangent vectors of $\h$. If $\frac{d}{dt}h=\zeta$,
$$
\frac{d}{dt} (\tilde{g}\cdot h)= (g_{22}- w g_{12})\zeta(g_{12} h=g_{11})^{-1},
$$
where $w=\tilde{g}\cdot h=(g_{22}h+g_{21})(g_{12}z+g_{11})^{-1}$. 
Using the transformation in {\bf Step 2}, we can carry a tangent vector $\zeta$ at $h$ to the tangent vector $y^{-1/2}\zeta y^{-1/2}$ at $i$. If $\zeta+\chi+i \Upsilon$, 
$$
\kappa_i(y^{-1/2}\zeta y^{-1/2})+\frac12 y^{-1/2}\left( \begin{array}{cc} -\Upsilon & \chi \\ \chi & \Upsilon \end{array} \right) y^{-1/2}.
$$

\underline{ \bf Step 4} To obtain $\kappa_h(\zeta)$, we use the inner automorphism $Ad_{\tilde{b}}$:
$$
\left( \begin{array}{cc} y^{-1/2} & 0 \\ xy^{-1/2} & y^{-1/2} \end{array}\right) \{ \frac12 y^{-1/2} \left( \begin{array}{cc} -\Upsilon & \chi \\ \chi & \Upsilon  \end{array}\right) y^{-1/2}\} \left( \begin{array}{cc} y^{1/2} & 0 \\ -y^{-1/2}x & y^{-1/2} \end{array}\right)
$$
$$
= \frac12 \left( \begin{array}{cc} -y^{-1}\chi y^{-1}x  &  y^{-1}\chi y^{-1} \\ \chi-xy^{-1}\chi y^{-1}x & xy^{-1}\chi y^{-1} \end{array}\right)+\frac12 \left( \begin{array}{cc} -y^{-1}\Upsilon & 0 \\ -xy^{-1}\Upsilon-\Upsilon y^{-1}x & \Upsilon y^{-1} \end{array}\right).
$$

\underline{\bf Step 5} Next, we compute the covariant derivative of the reductive connection in $\h$, at the point $i\in\h$. Consider the curve $h(t)=x(t)+i y(t)$ in $\h$, with $z(0)=i$, and the vector field $\zeta=\chi+i\Upsilon$ defined on a neighbourhood of $i$.  
Denote by $\frac{D\zeta}{dt}|_{t=0}$ the covariant derivative at $i\in\h$. According to \cite{espacioshomo}, one has that
$$
\kappa_i(\frac{D\zeta}{dt}|_{t=0})=\frac{d}{dt}\kappa_{h(t)}Z(h(t))|_{t=0}+ [\kappa_i(Z(i)),\kappa_i(\frac{d}{dt}h(t)|_{t=0}].
$$
To lighten the notation, we shall denote by $\kappa(\zeta)', h'$ the usual derivatives at $t=0$. So the formula above reads
$$
\kappa_i(\frac{ D\zeta}{dt}|_{t=0})= \kappa(\zeta)'+[\kappa_i(Z(i)),\kappa_i(h')].
$$
Then
$$
\kappa(\zeta)'=\frac12\left(\begin{array}{cc} -\Upsilon' & \chi' \\ \chi' & \Upsilon' \end{array} \right) + \frac12   \left(\begin{array}{cc} y'\Upsilon-\chi x' & -y'\chi-\chi y' \\ -x'\chi-\Upsilon x' & x'\chi-\Upsilon y' \end{array} \right)
$$
Let us write the sum of right hand matrix above with the bracket $[\kappa_i(Z(i)),\kappa_i(h')]$. After strenuous but elementary computations one gets
$$
\frac14 \left(\begin{array}{cc} \Upsilon y'-\chi x' & -\Upsilon x'-\chi y' \\ -\chi y'-\Upsilon x' & \chi x'-\Upsilon y' \end{array}\right) + \frac14\left(\begin{array}{cc} y' \Upsilon-x'\chi & -y'\chi -x'\Upsilon \\ -x'\Upsilon -y'\chi & x'\chi-y'\Upsilon \end{array} \right) .
$$
Now we must apply $\kappa_i^{-1}$, or else realize that the above term is the value of $\kappa_i$ at
$$
-Re(x'\Upsilon+y'\chi)+i Re(x'\chi-y'\Upsilon).
$$
Therefore,
$$
\frac{d\zeta}{dt}|_{t=0}=\chi'+i\Upsilon'+\{-Re(x'\Upsilon+y'\chi)+i Re(x'\chi-y'\Upsilon)\}.
$$
\underline{\bf Step 6} Let $h_0=x_0+i y_0\in\h$, and $h(t)=x(t)+i y(t)$ be a smooth curve in $\h$ with $h(0)=h_0$. Let $\zeta=\chi+i\Upsilon$ be a smooth vector field defined on a neighbourhood of $\h_0$. Let us compute   $\frac{D \zeta}{dt}|_{t=0}$. To do this, we shall use the invariance of the connection under the action of the group $\spa$, and the fact that we know this formula in the case $h_0=i$. As seen in Step 2, the (explicit) element $\tilde{b}$ defined there, performs $\tilde{b}^{-1}\cdot h_0=i$. Thus we can carry the data to the point $i$, perform the covariant derivative, and translate it back to $h_0$ with the action (note for instance, that $y_0^{1/2}Re(u)y_0^{1/2}=Re(y_0^{1/2}uy_0^{1/2}$, and so forth). We get:

\begin{equation}\label{derivada covariante}
\frac{D\zeta}{dt}=\zeta'-Re(x'y_0^{-1}\Upsilon+y'y_0^{-1}\chi)+i Re(x'y_0^{-1}\chi-y'y_0^{-1}\Upsilon).
\end{equation}
We devote the next section to describe examples of geodesics in the halfspace model $\h$:

\section{Examples of geodesics in $\h$}

In terms of  the decomposition (\ref{horizontalH}) of the Banach-Lie algebra of $\u(\theta_H)$,
$$
\x=\x_{as}\oplus\x_{s}=\{\tilde{\beta}\in M_2(\a)_{as}: \tilde{\beta}J=J\tilde{\beta}\}\oplus \{\tilde{\gamma}\in M_2(\a)_s: \tilde{\gamma}J=-J\tilde{\gamma}\}.
$$
geodesics of $\h$ starting at $i$  for $t=0$ have the form
$$
\delta(t)=e^{tX_h}\cdot i,
$$
where $X_h$ is a horizontal element, i.e., an antihermitian element in $M_2(\a)$ of the  form
$$
X_h=\left( \begin{array}{cc} \alpha &  \beta  \\  \beta & -\alpha \end{array}\right),
$$
with $\alpha^*=-\alpha$, $\beta^*=-\beta$.
We shall compute these geodesics in two particular cases: 
\begin{ejem}
Suppose that the entries $\alpha$ and $\beta$ in $X_h$  commute. Put $\gamma=(\alpha^2+\beta^2)^{1/2}$. The element $\gamma$ may not be invertible; however, if we make the assumption that $\a$ is weakly closed (i.e., a von Neumann algebra), then there exist unique selfadjoint elements $x$ and $y$ in $\a$ such that 
$x\gamma=\gamma x=\alpha$ and $y\gamma=\gamma y=\beta$. Moreover, there exists a selfadjoint element $\chi\in\a$ such that  $x=\cos(\chi)$ and $y=\sin(\chi)$. Accordingly, $X_h/\gamma=\left( \begin{array}{cc} \cos(\chi) & \sin(\chi) \\ \sin(\chi) & -\cos(\chi) \end{array} \right)$. Put ${\bf 1}_2=\left(\begin{array}{cc} 1 & 0 \\ 0 & 1 \end{array} \right)$.  Straightforward computations show that for $n\ge 0$
$$
(tX_h)^{2n}=(t\gamma)^{2n}{\bf 1}_2 \ \hbox{ and } \ (tX_h)^{2n+1}=(t\gamma)^{2n+1} X_h/\gamma .
$$
Then 
$$
e^{tX_h}=\cosh (t\gamma) {\bf 1}_2+  \sinh(t\gamma) X_h/\gamma=\left(\begin{array}{cc} \cosh(t\gamma)+\cos(\chi) \sinh(t\gamma) & \sin(\chi) \sinh(t\gamma) \\ \sin(\chi) \sinh(t\gamma) & \cosh(t\gamma)-\cos(\chi) \sinh(t\gamma) \end{array} \right).
$$
Therefore $\delta(t)=e^{tX_h}\cdot i$ equals
$$
\left(\sin(\chi) \sinh(t\gamma)+i (\cosh(t\gamma)-\cos(\chi)\right)\left(\cosh(t\gamma) +\cos(\chi)\sinh(t\gamma) + i \sin(\chi) \sinh(t\gamma)\right)^{-1}.
$$
After straightforward calculations, involving well known identities concerning $\cosh$ and $\sinh$, we arrive at
\begin{equation}\label{geodesica si conmutan}
\delta(t)=\left(\sin(\chi)\sinh(2t\gamma)+i\right)\left(\cosh(2t\gamma)+\cos(\chi)\sinh(2t\gamma)\right)^{-1}.
\end{equation}
Since all elements involved commute and $\xi,\gamma$ are selfadjoint, it follows that 
$$
Re(\delta(t))=\sin(\chi)\sinh(2t\gamma)\left(\cosh(2t\gamma)+\cos(\chi)\sinh(2t\gamma)\right)^{-1}
$$ 
and 
$$
Im(\delta(t))=\left(\cosh(2t\gamma)+\cos(\chi)\sinh(2t\gamma)\right)^{-1}.
$$
Therefore, if one regards $\delta$ as a geodesic in $TG^+$, i.e. $\delta=(Re(\delta),Im(\delta))$,  it is given by
$$
\delta(t)=\left( \sin(\chi)\sinh(2t\gamma)\left(\cosh(2t\gamma)+\cos(\chi)\sinh(2t\gamma)\right)^{-1}, \left(\cosh(2t\gamma)+\cos(\chi)\sinh(2t\gamma)\right)^{-1}\right).
$$

Additionally, if there exists $\mu\in\a$, $\mu=\mu^*$,  such that $\mu\beta=-\alpha$ (for instance, if $\beta$ is invertible), then
$$
(Re(\delta(t)-\mu)^2+(Im(\delta(t))^2=\mu^2+1, 
$$
that is, geodesics $\delta$  of the Poincar\'e halfspace (with commuting $\alpha,\beta$) satisfy the equation of an $\a$-valued circle, centered in the real axis (at $\mu^*=\mu$) with radius $(1+\mu^2)^{1/2}$.
\end{ejem}
Another special case which can be explicitly computed occurs when $\alpha$ and $\beta$ anti-commute: $\alpha\beta=-\beta\alpha$
\begin{ejem}
Suppose now that the entries $\alpha$, $\beta$ in $X_h$ anti-commute. Then $\left( \begin{array}{cc} \alpha & 0 \\ 0 & -\alpha \end{array}\right)$ and  $\left( \begin{array}{cc} 0 & \beta \\ \beta & 0 \end{array}\right)$  commute.
Thus
$$
e^{tX_h}=e^{t\left( \begin{array}{cc} \alpha & 0 \\ 0 & -\alpha \end{array}\right)}e^{t\left( \begin{array}{cc} 0 & \beta \\ \beta & 0 \end{array}\right)}=\left( \begin{array}{cc} e^{t\alpha} & 0 \\ 0 & e^{-t\alpha} \end{array}\right)\left( \begin{array}{cc} \cosh(t\beta) & \sinh(t\beta) \\ \sinh(t\beta) & \cosh(t\beta) \end{array}\right).
$$
Therefore
$$
\delta(t)=e^{tX_h}\cdot i=e^{-t\alpha}( \sinh(t\beta)+i\cosh(t\beta))(\cosh(t\beta) + i\sinh(t\beta))^{-1}e^{-t\alpha}.
$$
By straightforward computations, 
$$
(\sinh(t\beta)+i\cosh(t\beta))(\cosh(t\beta) + i\sinh(t\beta))^{-1}=(\sinh(2t\beta)+i)\cosh^{-1}(2t\beta).
$$
Since $\cosh$ is even, $\cosh(2t\beta)$ commutes with $e^{t\alpha}$, analogously, since $\sinh$ is odd, $e^{-t\alpha}\sinh(2t\beta)=\sinh(2t\beta)e^{t\alpha}$.
Then
$$
\delta(t)=\sinh(2t\beta)\cosh^{-1}(2t\beta)+i\cosh^{-1}(2t\beta)e^{-2t\alpha}.
$$

Clearly, this is the real/imaginary part decomposition of $\delta$. Note that the real part of $\delta$ does not depend on $\alpha$. Thus, as a curve in $TG^+$, this geodesic is given by
$$
\delta(t)=\left( \sinh(2t\beta)\cosh^{-1}(2t\beta), \cosh^{-1}(2t\beta)e^{-2t\alpha}\right).
$$

Suppose that $\alpha, \beta$ have a polar decompositions in $\a$, $\alpha=\mu|\alpha|=|\alpha|\mu$, $\beta=\nu|\beta|=|\beta|\nu$.  Then after elementary calculations, one has that the imaginary and real parts of $\delta$ satisfy the equation
$$
(Re(\delta-i\mu)^2+(\nu Im(\delta))^2=0.
$$

\end{ejem}

\section{Appendix: The Poincar\'e half-space of a Hilbertizable space.}

A complex locally convex topological vector space $\hh$ is {\it Hilbertizable} if there exists an inner product $\beta$ which makes $\hh$ a Hilbert space. In this section we fix such a space $\hh$.  We shall denote by $\s(\hh)$ the space of all sesquilinear forms $\sigma$ on $\hh$, which are continuous in both variables. We consider in $\s(\hh)$ the topology whose basis of neighbourhoods of the origin are the sets
$$
W(V,\epsilon)=\{ \sigma\in\s(\hh): |\sigma(\xi,\eta)|<\epsilon, \xi,\eta\in V\},
$$
where $V$ is a neighbourhoog of $0$ in $\hh$ and $\epsilon>0$. Let us denote by $\ii(\hh)$ the set of all $\beta\in\s(\hh)$ which are positive definite and which reproduce the topology of $\hh$.

There is a natural involution in $\s(\hh)$, which we shall call the {\it conjugation} in $\s(\hh)$, which is givan by
$$
\sigma^c(\xi,\eta)=\overline{\sigma(\eta,\xi)}.
$$
Then $\s(\hh)$ decomposes as
$$
\s(\hh)=\s_0(\hh)\oplus \s_1(\hh),
$$
the Hermitian ($\sigma_0^c=\sigma_0$) and anti-Hermitian ($\sigma^c_1=-\sigma_1$) forms, respectively. The set $\ii(\hh)$ is an open subset of $\s_0(\hh)$.

Denote by $\ele(\hh)$ the algebra of continuous linear operators acting in $\hh$. $\ele(\hh)$ is a topological algebra, with the topology given by 
$$
W(V,V')=\{ a\in\ele(\hh): a(\xi)\in V' \hbox{ for } \xi\in V\}
$$
as a system  of neighbourhoods of $0\in\ele(\hh)$, for $V,V'$ neighbourhoods of $0\in\hh$.

Given $\beta\in\ii(\hh)$, $\hh$ becomes a Hilbert space, we shall denote it by $\hh_\beta$. Likewise, $\ele(\hh)$ becomes a C$^*$-algebra, which will be denoted by $\ele_\beta(\hh)$.

\subsection{Charts in $\s(\hh)$}
Given $\beta\in\ii(\hh)$, put
$$
\Phi_\beta:\ele_\beta\to \s(\hh) \ , \ \ \Phi_\beta(a)(\xi,\eta)=\beta(a\xi,\eta).
$$
By Riesz' Theorem, it is clear that $\Phi_\beta$ is a bijection. We shall call $\Phi_\beta$ the {\it chart for} $\s(\hh)$ {\it centered at} $\beta$. Denote by $G(\hh)$ the group of bijective elements of $\ele(\hh)$. The group $G(\hh)$ acts on $\s(\hh)$ as  changes of variables, that is, if $g\in G(\hh)$ and $ \sigma\in\s(\hh)$,
$$
L_g\sigma(\xi,\eta)=\sigma(g^{-1}\xi,g^{-1}\eta).
$$
It is apparent that this action restricts to an action of $G(\hh)$ on $\ii(\hh)$, and that  it is transitive on $\ii(\hh)$.

Let us describe a change of charts by means on an element $g\in G(\hh)$. Let $\beta$ and $\tilde{\beta}=L_g\beta$. Then the following diagram commutes
\begin{equation}\label{cuadrado}
\begin{array}{lll}
\ele_\beta(\hh) &\stackrel{{\mathbb L}_g}{\longrightarrow} & \ele_\beta(\hh) \\
\downarrow Ad_g &   & \downarrow \Phi_\beta \\
\ele_{\tilde{\beta}}(\hh)  & \stackrel{\Phi_{\tilde{\beta}}}{\longrightarrow} & \s(\hh)
\end{array} .
\end{equation}
Here 
\begin{itemize}
\item
${\mathbb L}_g:\ele_\beta(\hh)\to \ele_\beta(\hh)$ is the action given by ${\mathbb L}_ga=\hat{g}ag^{-1}$, where $\hat{g}=(g^{-1})^*$, with $*$ the involution of $\ele_\beta(\hh)$. This map $\mathbb{L}_g$ is a $*$-preserving linear isomorphism. It is  also an isomorphism of the set $\ele_\beta^+(\hh)$  of positive invertible operators, preserving its metric and its linear connection (see Section 6).
\item
$Ad_g:\ele_\beta(\hh)\to \ele_{\tilde{\beta}}(\hh)$ is a C$^*$-algebra isomorphism.
\end{itemize}

The diagram (\ref{cuadrado}) can be read as follows:
$$
\Phi_{L_g\beta}=\Phi_\beta \mathbb{L}_g Ad_{g^{-1}}.
$$
As was noted in Section 6, the group $G(\hh)$ acts on $\ele_\beta^+(\hh)$ by means of the action $\mathbb{L}_g$. $G(\hh)$ acts also on $\ii(\hh)$ as noted above.  The commutativity of the diagram (\ref{cuadrado}) implies that  $\Phi_\beta$ intertwines both actions:
$$
\Phi_\beta:\ele_\beta^+(\hh) \to \ii(\hh) \ ,  \  L_g\Phi_\beta=\Phi_\beta \mathbb{L}_g.
$$
\subsection{The Poincar\'e half-space}
We shall denote the Poincar\'e half-space of $\hh$  by ${\bf \Pi}(\hh)$, which is the set
$$
{\bf \Pi}(\hh)=\{\sigma\in\s(\hh) : Im(\sigma)\in\ii(\hh)\},
$$
where $Re(\sigma)=\frac12(\sigma+\sigma^c)$ and $Im(\sigma)=\frac{i}{2}(\sigma^c-\sigma)$. We define charts in ${\bf \Pi}(\hh)$. Let $\beta\in\ii(\hh)$. Clearly $\Phi_\beta(a^*)=\Phi_\beta(a)^c$. Therefore $\Phi_\beta$ maps the Poincar\'e halfspace space $\h(\ele_\beta(\hh))$ of the C$^*$-algebra $\ele_\beta(\hh)$ as defined in Section 1, onto   ${\bf \Pi}(\hh)$.

The purpose of this Appendix is to introduce the geometry  of the {\it bundle of observables} associated with the space of metrics $\ii(\hh)$ of the Hilbertizable space $\hh$. Specifically, the bundle
$$
\oo\to \ii(\hh),
$$  
where the fiber $\oo_\beta$  over $\beta\in\ii(\hh)$ is the vector space
$$
\oo_\beta=\{ a\in\ele_\beta(\hh): a^*=a\}.
$$
We claim that the natural way to present the bundle of observables is as the tangent bundle $T\ii(\hh)$.

But the space $T\ii(\hh)$ can be identified with the Poincar\'e half-space ${\bf \Pi}(\hh)$ as follows. A tangent vector $X\in (T\ii(\hh)_\beta$ is, canonically, an element of $\s_0(\hh)$. Therefore the map
$$
(\beta,X)\in(T\ii(\hh)_\beta \longleftrightarrow X+i\beta \in {\bf \Pi}(\hh)
$$
identifies $T\ii(\hh)$  with ${\bf \Pi}(\hh)$ in a natural way.

\subsection{The group of movements in ${\bf \Pi}(\hh)$.}

Let $\Phi_\beta$ and  $\Phi_{\tilde{\beta}}$, with $\tilde{\beta}=L_g\beta$ for $g\in G(\hh)$, be  a pair of charts. We shall denote by $\phi$ the change of charts given by
$$
\begin{array}{lll} \ele_\beta(\hh) & \stackrel{\Phi_\beta}{\to} & \s(\hh)  \\  \uparrow  \psi &  \stackrel{\Phi_{\tilde{\beta}} }{\nearrow} & 
\\ \ele_{\tilde{\beta}}(\hh) &  &   \end{array}\  , \ \  \ \Phi_\beta^{-1}\Phi_{\tilde{\beta}}=\mathbb{L}_g Ad_{g^{-1}}=\psi.
$$ 
The C$^*$-isomorphism $Ad_{g^{-1}}:\ele_{\tilde{\beta}}(\hh)\to \ele_\beta(\hh)$ enables one to relate any construction done in both algebras. For instance, the forms $\theta_H$ and their unitary groups $\spa$. Consider the group $\spa_\beta$, which  acts on the half-space ${\bf \Pi}(\ele_\beta(\hh))\subset \ele_\beta(\hh)$. If $h\in\spa_\beta$ and $z\in{\bf \Pi}(\ele_\beta(\hh))$, denote by  $\Lambda_hz$ the action of $h$ on $z$. We have the following diagram:
$$
\begin{array}{ccc}
{\bf \Pi}(\ele_\beta(\hh)) & \stackrel{\psi \Lambda_{\tilde{h}} \psi^{-1}}{\longrightarrow} & {\bf \Pi}(\ele_\beta(\hh)) \\
\uparrow \psi & & \uparrow \psi \\
{\bf \Pi}(\ele_{\tilde{\beta}}(\hh)) & \stackrel{\Lambda_{\tilde{h}}}{\longrightarrow} &  {\bf \Pi}(\ele_{\tilde{\beta}}(\hh))
\end{array} .
$$
Here $\Lambda_{\tilde{h}}$ denotes  the action of $\tilde{h}\in\spa_{\tilde{\beta}}$ on ${\bf \Pi}(\ele_{\tilde{\beta}}(\hh))$.
We want to exhibit how the transition maps $\psi$ translate this action on ${\bf \Pi}(\ele_{\beta}(\hh))$. Note that
\begin{equation}\label{lambdas}
\psi \Lambda_{\tilde{h}}\psi^{-1}=\mathbb{L}_g Ad_{g^{-1}} \Lambda_{\tilde{h}} Ad_g \mathbb{L}_{g^{-1}}=\mathbb{L}_g \Lambda_h \mathbb{L}_{g^{-1}}.
\end{equation}

Denote by $G_\beta(\hh)$ the group of invertible operators in $\ele_\beta(\hh)$. There is a representation 
$$
u: G_\beta(\hh)\to \spa_\beta \ , \ \  u(g)=\left( \begin{array}{cc} g & 0 \\ 0 & \hat{g} \end{array} \right).
$$
(Recall that $\hat{g}=(g^{-1})^*$). Using this representation  we may write (\ref{lambdas}) above as
$$
\psi \Lambda_{\tilde{h}}\psi^{-1}=u(g)\Lambda_h u(g)^{-1}.
$$
Moreover, if we denote by $\mathbb{A}d_g:\spa_\beta\to \spa_{\tilde{\beta}}$,
$$
\mathbb{A}d_g=u(g)hu(g)^{-1},
$$
 it is straightforward to verify that
\begin{equation}\label{ads}
\psi \Lambda_{\tilde{h}}\psi^{-1}=\mathbb{A}d_g\lambda_{\mathbb{A}d_{g^{-1}}}\tilde{h}.
\end{equation}
Therefore we have obtained the following construction:
\begin{itemize}
\item
To each $\beta\in\ii(\hh)$ we associate the group $\spa_\beta$.
\item
If $\tilde{\beta}=L_g\beta$, to the pair $(\beta,\tilde{\beta})$ we associate the group  isomorphism 
$$
\omega_{\beta \tilde{\beta}}: \spa_{\tilde{\beta}} \to \spa_\beta , \omega_{\tilde{\beta},\beta}(\tilde{h})=\mathbb{A}d_g\lambda_{\mathbb{A}d_{g^{-1}}}\tilde{h}.
$$ 
Remarkably, this isomorphism $\omega_{\beta \tilde{\beta}}$ does not depend on the choice of $g$: if $\tilde{\beta}=L_g\beta=L_{g'}\beta$, then $g$ and $g'$ give rise to the same isomorphism. This is another straightforward computation left to the reader.
\end{itemize}
We shall refer to this system of groups and group isomorphism as the {\it group of movements} ${\cal U}(\theta)$  of the Poincar\'e half-space ${\bf \Pi}(\hh)$. This group of movements acts on ${\bf \Pi}(\hh)$ and is independent on the choice of coordinates. Thus,  ${\bf \Pi}(\hh)$ is a homogeneous space of the group of movements ${\cal U}(\theta)$.

\subsection{The relative case}

So far, the building blocks of this construction are a Hilbertizable space $\hh$, the space of forms $\s(\hh)$ and its subset of inner products $\ii(\hh)$. Now we want to restrict these constructions to a subalgebra $\a\subset \ele(\hh)$. Let us define:

\begin{defi}
A {\it C$^*$-pair} is a pair $(\hh,\a)$,  where  $\hh$ is a Hilbertizable space and $\a\subset\ele(\hh)$ is a subalgebra, such that there exists $\beta\in\ii(\hh)$ such that $\a$ is a sub-C$^*$-algebra of $\ele_\beta(\hh)$. 

In such case, we say that $\beta$ is adapted to $\a$. Let us denote by
$$
\ii(\a)=\{\beta\in\ii(\hh): \beta \hbox{ is adapted to } \a\}.
$$
\end{defi}
Denote by $G_\a$ the invertible group of $\a$. Clearly $G_\a$ acts on $\ii(\hh)$, and $L_g\beta\in\ii(\a)$ if $\beta\in\ii(\a)$ and $g\in G_\a$. Denote by $\ii_{\cal O}(\a)\subset \ii(\a)$ an orbit of this action.  A {\it  C$^*$-triple} is a triple 
$$
(\hh,\a,\ii_{\cal O}(\a))
$$
consisting of a C$^*$-pair and a orbit.  With these data, we can repeat the former constructions, substituting  $\ele(\hh)$ by $\a$, and $\ii(\hh)$ by $\ii_{\cal O}(\a)$.

The Poincar\'e half-space ${\bf \Pi}(\a)$ of $\a$ is formed by the elements of $\s_{\cal O}(\a)$ with positive imaginary part.  The (restricted) group of movements ${\cal U}(\theta)$ acts on ${\bf \Pi}(\a)$ accordingly.

Esteban Andruchow \\
Instituto de Ciencias,  Universidad Nacional de Gral. Sarmiento,
\\
J.M. Gutierrez 1150,  (1613) Los Polvorines, Argentina
\\ 
and Instituto Argentino de Matem\'atica, `Alberto P. Calder\'on', CONICET, 
\\
Saavedra 15 3er. piso,
(1083) Buenos Aires, Argentina.
\\
e-mail: eandruch@ungs.edu.ar

\bigskip

Gustavo Corach\\
Instituto Argentino de Matem\'atica, `Alberto P. Calder\'on', CONICET,
\\
Saavedra 15 3er. piso, (1083) Buenos Aires, Argentina,
\\
and Depto. de Matem\'atica, Facultad de Ingenier\'\i a, Universidad de Buenos Aires, Argentina.
\\
e-mail: gcorach@fi.uba.ar

\bigskip

L\'azaro Recht \\
Departamento de Matem\'atica P y A, 
Universidad Sim\'on Bol\'\i var \\
Apartado 89000, Caracas 1080A, Venezuela  \\
e-mail: recht@usb.ve

\end{document}